\providecommand{\opA}{\mathbb{A}}
\providecommand{\A}{\opA}
\providecommand{\RK}{\setR^K}
 \newtheorem{assumption}[theorem]{Assumption}
\providecommand{\R}{\mathbb{R}}
\providecommand{\dif}{\operatorname{d}\!}
\providecommand{\bv}{\setBV}
\providecommand{\bd}{\operatorname{BD}}
\providecommand{\BVA}{\setBV^\opA}
\providecommand{\bvA}{\BVA}
\providecommand{\WA}{W^{\opA,1}}
\providecommand{\hold}{\operatorname{C}}
\providecommand{\sobo}{W}
\providecommand{\lebe}{L}
\providecommand{\tracenew}{\operatorname{\widetilde{\trace}}}
\providecommand{\locc}{\operatorname{loc}}
\providecommand{\ball}{\operatorname{B}}
\providecommand{\spt}{\operatorname{spt}}
\providecommand{\wstar}{\weakastto}
\providecommand{\strict}{\strictto}
\providecommand{\strictto}{\stackrel{s}{\rightarrow}}
\providecommand{\areato}{\stackrel{\langle\cdot\rangle}{\rightarrow}}
\providecommand{\di}{\operatorname{div}}
\providecommand{\ball}{\operatorname{B}}
\renewcommand{\Re}{\operatorname{Re}}
\renewcommand{\Im}{\operatorname{Im}}
\newcommand{\ld}{\operatorname{LD}}
\newcommand{\mres}{\!\mathbin{\vrule height 1.6ex depth 0pt width
0.13ex\vrule height 0.13ex depth 0pt width 1.1ex}\!}
\providecommand{\CA}{\ensuremath{\mathscr{C}(\opA)}}
\providecommand{\RA}{\ensuremath{\mathscr{R}(\opA)}}
\numberwithin{equation}{section}
\numberwithin{theorem}{section}
\begin{document}

\title[Traces of $\bvA$--maps]{On the Trace Operator for Functions of Bounded
  $\mathbb{A}$--Variation}

\author[D.~Breit]{Dominic Breit}
\address{Dominic Breit, Department of Mathematics, Heriot-Watt
    University, Riccarton Edinburgh EH14 4AS, UK}
\email{d.breit@hw.ac.uk}

\author[L.~Diening]{Lars Diening}
\address{Lars Diening, Universität Bielefeld,
Fakultät für Mathematik,
Postfach 10 01 31,
D-33501 Bielefeld}
\email{lars.diening@uni-bielefeld.de}

\author[F.~Gmeineder]{Franz Gmeineder}
\address{Franz Gmeineder, University of Bonn, Department of Applied Mathematics, Endenicher Allee 60, 53115 Bonn, Germany}
\email{gmeineder@maths.ox.ac.uk}

\subjclass[2010]{46E35, 26D10, 46E30, 26B30, 49J45}
\maketitle
\begin{abstract}
In this paper, we consider the space $\BVA(\Omega)$ of functions of bounded $\A$-variation. For a given first order linear homogeneous differential operator with constant coefficients $\A$, this is the space of $L^1$--functions $u:\Omega\rightarrow\mathbb R^N$ such that the distributional differential expression $\A u$ is a finite (vectorial) Radon measure. We show that for Lipschitz domains $\Omega\subset\R^{n}$, $\BVA(\Omega)$--functions have an $L^1(\partial\Omega)$--trace if and only if $\A$ is $\mathbb C$-elliptic (or, equivalently, if the kernel of $\A$ is finite dimensional). The existence of an $L^1(\partial\Omega)$--trace was previously only known for the special cases that $\A u$ coincides either with the full or the symmetric gradient of the function $u$ (and hence covered the special cases $\bv$ or $\bd$). 
As a main novelty, we do not use the fundamental theorem of calculus to construct the trace operator (an approach which is only available in the $\bv$- and $\bd$-setting) but rather compare projections onto the nullspace as we approach the boundary. 
As a sample application, we study the Dirichlet problem for quasiconvex variational functionals with linear growth depending on $\A u$. 
\end{abstract}


\section{Introduction}
\label{sec:introduction}
\subsection{Aim and Scope}
Let $\Omega$ be an open, bounded Lipschitz domain in $\R^{n}$ and let $1\leq p <\infty$. A key tool in the study of partial differential equations is the assignment of boundary values to elements $u\in\sobo^{1,p}(\Omega;\R^{N})$, often being the first step towards well-posedness results for such equations. In this respect, it is a well-established fact (cf.~\cite{Mazya}) that if $1<p<\infty$, then there exists a surjective, bounded linear trace embedding operator 
\begin{align}\label{eq:1}
\trace:\sobo^{1,p}(\Omega;\R^{N}) \hookrightarrow\sobo^{1-1/p,p}(\partial\Omega;\R^{N})
\end{align}
which satisfies $\trace(u)=u|_{\partial\Omega}$ for $u\in C(\overline\Omega;\R^N)\cap\sobo^{1,p}(\Omega;\R^{N})$. If $p=1$ instead, a result due to \textsc{Gagliardo} \cite{Gal57} asserts that there exists a surjective, bounded linear trace embedding operator
\begin{align}
\label{eq:2}
\trace:\sobo^{1,1}(\Omega;\R^{N}) \hookrightarrow\lebe^{1}(\partial\Omega;\R^{N}).
\end{align}
The same holds true when $\sobo^{1,1}(\Omega;\R^{N})$ is replaced by $\bv(\Omega;\R^{N})$, the $\R^{N}$-valued functions of bounded variation on $\Omega$. Both boundary trace embeddings \eqref{eq:1}, \eqref{eq:2} and the corresponding variant for $\bv$
hinge on inequalities 
\begin{align}\label{eq:traceinequalitiesgeneral}
\begin{split}
&\|u\|_{\sobo^{1-\frac{1}{p},p}(\partial\Omega;\R^{N})}\leq C (\|u\|_{\lebe^{p}(\Omega;\R^{N})}+\|Du\|_{\lebe^{p}(\Omega;\R^{N\times n})})\\
& \|u\|_{\lebe^{1}(\partial\Omega;\R^{N})}\leq C (\|u\|_{\lebe^{1}(\Omega;\R^{N})}+\|Du\|_{\lebe^{1}(\Omega;\R^{N\times n})})
\end{split}
\end{align}
if $1<p<\infty$ or $p=1$, respectively, to be satisfied for all $u\in \hold(\overline{\Omega};\R^{N})\cap\sobo^{1,p}(\Omega;\R^{N})$. These estimates in turn are obtained as a consequence of the fundamental theorem of calculus in conjunction with a smooth approximation argument.

As one of the fundamental achievements of 20th century harmonic analysis, \textsc{Calder\'{o}n \& Zygmund} \cite{MR18:894a} and \textsc{Mihlin} \cite{Mih56} established that in a wealth of inequalities, the \emph{full gradient} can be replaced by weaker quantities only involving certain combinations of derivatives. Precisely, let $\A$ be a
constant--coefficient, linear, homogeneous differential operator from
$\RN$ to $\RK$, i.e., there exist fixed linear maps
$\A_\alpha\colon \RN\to \RK$ with
\begin{align}
  \label{eq:form}
  \A=\sum_{\alpha=1}^n \A_\alpha\partial_\alpha.
\end{align}
Then
for each $1<p<\infty$ there exists $c=c(p,n,\A)>0$ such that there holds
\begin{align}\label{eq:calderonzygmund}
\|D u\|_{\lebe^{p}(\R^{n};\R^{N\times n})}\leq c\|\A u\|_{\lebe^{p}(\R^{n};\R^{K})}\qquad\text{for all}\; u\in\hold_{c}^{\infty}(\R^{n};\R^{N})
\end{align} 
if and only if $\A$ is \emph{elliptic}. Here we say that $\A$ is elliptic if and only if for each $\xi=(\xi_{1},...,\xi_{n})\in\R^{n}\setminus\{0\}$ the \emph{symbol map} $\A[\xi]:=\sum_{\alpha}\xi_{\alpha}\A_{\alpha}\colon \R^{N}\to\R^{K}$ is an injective linear map. 
 A special instance of \eqref{eq:calderonzygmund} is the case of the symmetric gradient operator $\mathcal{E}u:=\frac{1}{2}(Du+D^{\top}\!u)$ acting on maps $u\colon\R^{n}\to\R^{n}$ (here $N=n\geq 2$ and $K=n^{2}$, identifying $\R^{n\times n}\cong \R^{n^{2}}$). In this situation, \eqref{eq:calderonzygmund} gives the usual Korn inequalities which play a pivotal role in elasticity or fluid mechanics; see \cite{FuchsSeregin} for a
comprehensive overview. 

Singular integrals or Fourier multiplier operators 
in general are not bounded on $\lebe^{1}$. Thus one expects the exponent range $1<p<\infty$ for \eqref{eq:calderonzygmund} to hold to be optimal for general elliptic operators $\A$. This is in fact true and manifested by \textsc{Ornstein}'s celebrated Non-Inequality, stating the impossibility of non-trivial $\lebe^{1}$-estimates: 
\vspace{0.25cm}

\textbf{Theorem} (Ornstein (\cite{Or})). \emph{Let $\A$ and $\mathbb{B}$ be two constant-coefficient first order, linear homogeneous differential operators from on $\R^{n}$ from $\R^{N}$ to $\R^{K}$ and from $\R^{N}$ to $\R$, respectively. Suppose that there exists a constant $c>0$ such that 
\begin{align*}
\|\mathbb{B}u\|_{\lebe^{1}(\R^{n})} \leq c\|\A u\|_{\lebe^{1}(\R^{n};\R^{K})}\qquad\text{for all}\;u\in\hold_{c}^{\infty}(\R^{n};\R^{N}). 
\end{align*}
Then there exists $T\in\mathscr{L}(\R^{N};\R)$ such that $\mathbb{B}=T\circ\mathbb{A}$.}
\vspace{0.25cm}

This negative result -- which faces contributions to date, see \cite{CFM,KiKr16} -- immediately yields that if $p=1$, inequalities that involve the full gradients $Du$ do not necessarily generalise to those involving only $\A u$. On the other hand, by \cite{ts} it is known for the special case of $\A$ being the symmetric gradient operator that \eqref{eq:traceinequalitiesgeneral}(b) remains valid indeed for $p=1$ when $D$ is replaced by $\mathcal{E}$. However, the method employed in \cite{ts,Baba} to arrive at this result is very specific to the symmetric gradient operator and its structural properties: Again based on the fundamental theorem of calculus, $\mathcal{E}u$ then allows to
control a cone of line integrals emanating from the boundary, leading to the desired trace inequality. In particular, it is far from clear whether and if so, how, trace inequalities of the form \eqref{eq:traceinequalitiesgeneral} can be established for $p=1$ and $D$ being replaced by differential operators $\A$ of the form \eqref{eq:form}. As we shall see below in Section~\ref{sec:mainresults}, even for general elliptic operators $\A$ the corresponding analogues of \eqref{eq:traceinequalitiesgeneral} break down and hence the method employed for the symmetric gradient cannot easily generalise.  

This leads us to the following \textbf{classification problem}: \emph{Classify all differential operators of the form \eqref{eq:form} such that for any open and bounded Lipschitz domain $\Omega\subset\R^{n}$ there exists a constant $c>0$ such that 
\begin{align}\label{eq:keytraceinequality}
\|u\|_{\lebe^{1}(\partial\Omega;\R^{N})}\leq c (\|u\|_{\lebe^{1}(\Omega;\R^{N})}+\|\A u\|_{\lebe^{1}(\Omega;\R^{K})})
\end{align}
holds for all $u\in\hold(\overline{\Omega};\R^{N})\cap\hold^{1}(\Omega;\R^{N})$.
} The overall objective of the present paper is to solve this classification problem. Before we pass on to the precise description of our results -- in particular, Theorem~\ref{thm:main1} -- we briefly pause and connect this theme to other results available in the literature first. 
\subsection{Contextualisation and Function Spaces}
The quest for classifying differential operators $\A$  of the form \eqref{eq:form} such that well-known inequalities generalise to the $\A$-framework for $p=1$ has come up rather recently. Building on the foundational work of \textsc{Bourgain \& Brezis} \cite{BoBr0,BoBr1,BoBr2}, \textsc{Van Schaftingen} \cite{Van13} characterised all operators $\A$ of the form \eqref{eq:form} for which a Sobolev-type inequality 
\begin{align}\label{eq:VSineq1}
\|u\|_{\lebe^{\frac{n}{n-1}}(\R^{n};\RN)}\leq C\|\A u\|_{\lebe^{1}(\R^{n};\RK)}\qquad\text{for all}\;u\in\hold_{c}^{\infty}(\R^{n};\R^{N})
\end{align}
holds. Whereas ellipticity of $\A$ is easily seen to be necessary for \eqref{eq:VSineq1}, it is far from sufficient and needs to be augmented by the so-called \emph{cancellation condition}. Following \cite{Van13}, we call $\A$ \emph{cancelling} if and only if 
\begin{align*}
\bigcap_{\xi\in\R^{n}\setminus\{0\}}\A[\xi](\R^{N})=\{0\}.
\end{align*}
Note that by ellipticity, 
 $u\in\hold_{c}^{\infty}(\R^{n};\R^{N})$ can be represented via $u=k_{\A}*\A u$ where $k_{\A}\colon\R^{n}\setminus\{0\}\to\mathscr{L}(\R^{K};\R^{N})$ satisfies the growth bound $|k_{\A}(y)|\sim |y|^{1-n}$ for $y\in\R^{n}\setminus\{0\}$. Then the fractional integration theorem only implies that the convolution with $k_{\A}$ yields an operator that maps $\lebe^{1}(\R^{n};\R^{K})\to\lebe_{\text{w}}^{\frac{n}{n-1}}(\R^{n};\R^{N})$ boundedly with the weak-$\lebe^{\frac{n}{n-1}}$ space $\lebe_{\text{w}}^{\frac{n}{n-1}}(\R^{n};\R^{N})$, and so \eqref{eq:VSineq1} implies a proper improvement based on the additional cancellation condition.

To unify this theme also in view of \eqref{eq:keytraceinequality}, we wish to interpret the above inequalities in terms of (boundary trace) embeddings and thus introduce function spaces via  
\begin{align*}
  \WA(\Omega) &:=\bigset{ v\in\lebe^{1}(\Omega;\R^{N})\colon\; \A
  u\in L^1(\Omega; \RK)},\\
  \bvA(\Omega) &:=\big\{ v\in\lebe^{1}(\Omega;\R^{N})\colon\; \A
  u\in\mathcal{M}(\Omega;\RK)\big\},
\end{align*}
where $\Omega\subset\R^{n}$ is open, $\A$ is a differential operator of the form \eqref{eq:form} and $\mathcal{M}(\Omega;\RK)$ denotes the $\R^{K}$-valued Radon measure of finite total variation on $\Omega$. These spaces are normed canonically via $\|u\|_{\sobo^{\A,1}}=\|u\|_{\lebe^{1}}+\|\A u\|_{\lebe^{1}}$ (similarly for $\bv^{\A}$ with the obvious modifications); clearly, $\sobo^{\A,1}(\Omega)\subsetneq\bv^{\A}(\Omega)$ and we shall refer to $\bv^{\A}(\Omega)$ as \emph{space of functions of bounded $\A$-variation}. In the literature, only particular instances of spaces $\bv^{\A}$ have been studied in detail, namely for $\A=\nabla$ or $\A=\mathcal{E}$, leading to the spaces $\bv$ or $\bd$ of functions of bounded variation or deformation, respectively. Precisely, we then have $\sobo^{1,1}=\sobo^{\nabla,1}$, $\ld=\sobo^{\mathcal{E},1}$, $\bv=\bv^{\nabla}$, $\bd=\bv^{\mathcal{E}}$, and this paper is the first attempt to characterise the properties of $\bv^{\A}$-maps in terms of the properties of $\A$ in a unifying manner. By this, we also aim to clarify the underlying mechanisms for the corresponding trace inequalities to work in the known cases $\A=D$ and $\A=\mathcal{E}$.

Returning to the classification problem related to \eqref{eq:keytraceinequality}, we conclude this subsection by pointing out that ellipticity in itself cannot yield the required $\lebe^{1}$-trace theory. In fact, consider the operator $\mathcal{E}^{D}u:=\mathcal{E}u-\frac{1}{n}\di(u)E_{n}$ ($E_{n}\in\R^{n\times n}$ being the identity matrix) which is usually referred to as \emph{trace-free symmetric gradient operator}, for $n\geq 2$. This operator enters in a variety of applications, so for instance fluid mechanics or general relativity, cf. \cite{Fei04} and \cite{BI04}. Regardless of $n\geq 2$, $\mathcal{E}^{D}$ is elliptic, see Example \ref{ex:opexamp} \ref{itm:opexamp3}. However, the following example from \cite{FuchsRepin} shows that an $L^1$-trace does not exists if $n=2$. Identifying $\R^{2}\cong\mathbb{C}$, $\ker(\mathcal{E}^{D})$ essentially contains the holomorphic functions. Upon identifying $\R^{2}$ with $\mathbb{C}$ and denoting $\mathbb{D}$ the open unit disc in $\mathbb{C}$, the map $u\colon\mathbb{D}\ni z\mapsto 1/(z-1)\in\mathbb{C}$ even  belongs to $\sobo^{\mathcal{E}^{D},1}(\ball(0,1))$ whereas it is clear that $\|\trace(u)\|_{\lebe^{1}(\partial\!\ball(0,1))}=\infty$. In view of \eqref{eq:keytraceinequality}, our main result, Theorem~\ref{thm:main1} below, will cover the particular case of $\A=\mathcal{E}^{D}$ as a special case and provide a
positive answer for all~$n\geq 3$ and a negative answer for~$n=2$.


\subsection{Main Results}\label{sec:mainresults}

Before we state our main result, we need to provide the definitions of several important properties
of our operator~$\opA$. To begin with, we write the \emph{symbol mapping}
$\opA[\xi]\,:\, \RN \to \RK$ as
\begin{align}
  \label{eq:bilinear-1}
  \opA[\xi]v &:= v \otimes_\opA \xi := \sum_{\alpha=1}^n \xi_\alpha
               \opA_\alpha v,\qquad \xi=(\xi_{1},...,\xi_{n})\in\R^{n},\;v\in\R^{N}.
\end{align}
Moreover, we extend $\A[\xi]\eta=\eta \otimes_{\opA} \xi$ by~\eqref{eq:bilinear-1} also to complex valued
$\xi \in \setC^n$ and~$\eta \in \setC^N$. We strengthen terminology and say that~$\opA$ is \emph{$\setR$-elliptic} if~$\A[\xi]\colon\R^{N}\to\R^{K}$ is
injective for all $\xi\in\R^{n}\setminus\{0\}$ (i.e., $\A$ is elliptic in the above sense), and \emph{$\setC$-elliptic} provided $\opA[\xi]\colon\mathbb{C}^{N}\to\mathbb{C}^{K}$ is injective for
all~$\xi \in \setC^n \setminus \set{0}$ (cf.~Section~\ref{sec:assumpt-diff-oper} for more detail). Finally, we shall say that $\opA$ has \emph{finite dimensional
  nullspace} if the kernel~$N(\opA)$ of~$\opA$ in the distributional
sense is finite dimensional, i.e.
\begin{align}
  \label{eq:FDN}
  \dim(N(\opA))< \infty \quad \text{ with $N(\opA) = \set{v\in\mathcal{D}'
  (\Rn;\RN)\colon\;\opA v\equiv 0}$},
\end{align}
where $\mathcal{D}(\Rn;\RN) = C^\infty_0(\Rn;\RN)$.
We will see later in Theorem~\ref{thm:opA-equiv} that~$\opA$ has a
finite dimensional nullspace if and only if it is~$\setC$-elliptic. It
is also equivalent to the \emph{type~$(C)$ condition} in the sense of~\cite{Kal94}, see
Remark~\ref{rem:Kal94}. However, the notion of~$\setR$-ellipticity is strictly
weaker: For instance, $\mathcal{E}^D$ for $n=2$ is $\setR$-elliptic but not
$\setC$-elliptic, see Example~\ref{ex:opexamp}~\ref{itm:opexamp3}. We are now in position to formulate our main result. 
\begin{theorem}
Let $\A$ be a differential operator of the form \eqref{eq:form}. Then the following are equivalent: 
\begin{enumerate}
\item For all open and bounded Lipschitz domains $\Omega\subset\R^{n}$ there exists a constant $c>0$ such that \eqref{eq:keytraceinequality} holds for all $u\in\hold(\overline{\Omega};\R^{N})\cap\hold^{1}(\Omega;\R^{N})$. 
\item $\A$ is $\mathbb{C}$-elliptic.
\end{enumerate}
\end{theorem}
Whereas necessity of $\mathbb{C}$-ellipticity for \eqref{eq:keytraceinequality} shall be addressed in Theorem~\ref{thm:no-trace} and essentially follows from a construction relying on the properties of the two-dimensional operator $\mathcal{E}^{D}$, the more involved part is the sufficiency. For future reference, we single this out and state in the following more elaborate form; the full statement
can be found in~Theorem~\ref{thm:main1b}:
\begin{theorem}[Trace theorem]\label{thm:main1}
  Let $\A$ be $\setC$-elliptic (or equivalently, $\A$ has finite
  dimensional nullspace). Then there exists a trace
  operator~$\trace\,:\, \BVA(\Omega) \to L^1(\partial \Omega,
  \mathcal{H}^{n-1})$ such that the following holds:
  \begin{enumerate}
  \item  $\trace(u)$ coincides with the classical
    trace for all $u \in \BVA(\Omega) \cap C(\overline{\Omega};\R^{N})$.
  \item  $\trace(u)$ is the unique
    strictly-continuous extension of the classical trace
    on~$\BVA(\Omega) \cap C(\overline{\Omega};\R^{N})$. Especially, $\trace\colon\bv^{\A}(\Omega)\to\lebe^{1}(\partial\Omega;\mathcal{H}^{n-1})$ is continuous for the norm topology on $\bv^{\A}(\Omega)$. 
  \item    $\trace(\WA(\Omega))=\trace(\BVA(\Omega))=L^1(\partial \Omega;
    \mathcal{H}^{n-1})$.
  \end{enumerate}
\end{theorem}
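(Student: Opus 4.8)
The plan is to dispose of the soft parts of the statement first and to concentrate the real work on producing a bounded, strictly continuous trace operator. The surjectivity in~(iii) is essentially formal: since $\A$ is a \emph{first order} operator, $|\A v|\le C\,|\nabla v|$ pointwise, so $W^{1,1}(\Omega;\R^{N})\hookrightarrow\WA(\Omega)$; as the classical $W^{1,1}$--trace maps onto $L^{1}(\partial\Omega;\R^{N})$ and, by~(i), coincides with $\trace$ on $W^{1,1}$, we obtain $L^{1}(\partial\Omega;\mathcal{H}^{n-1})\subseteq\trace(\WA(\Omega))\subseteq\trace(\BVA(\Omega))\subseteq L^{1}(\partial\Omega;\mathcal{H}^{n-1})$, hence equality throughout. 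For~(i)--(ii) the key structural fact is that $C^{\infty}(\Omega)\cap\WA(\Omega)$ is dense in $\BVA(\Omega)$ for the strict topology — a standard mollification plus partition of unity argument which only uses that $\Omega$ is Lipschitz — so that once $\trace$ is constructed on $\BVA(\Omega)$ as a strictly continuous extension of $v\mapsto v|_{\partial\Omega}$ from $\BVA(\Omega)\cap C^{0}(\overline{\Omega})$, it is automatically the \emph{unique} such extension and~(i) holds by construction. Everything therefore reduces to a uniform trace inequality for smooth maps, together with its compatibility with strict convergence.

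The core is the following model estimate: on a reference domain $\mathcal{C}=\{x\in\ball_{1}:x_{n}>\gamma(x')\}$, with $\gamma$ Lipschitz and $\gamma(0)=0$, and graph piece $\Gamma=\{x\in\ball_{1/2}:x_{n}=\gamma(x')\}$, one wants
\[
  \|v\|_{L^{1}(\Gamma)}\le C\bigl(\|v\|_{L^{1}(\mathcal{C})}+\|\A v\|_{L^{1}(\mathcal{C})}\bigr)\qquad\text{for all }v\in C^{\infty}(\overline{\mathcal{C}};\R^{N}),
\]
with $C$ depending only on $\A$ and on the Lipschitz constant of $\gamma$; since $\A$ is first order, the substitution $x\mapsto rx$ then upgrades this at once to the scale-$r$ version carrying the favourable weight $r^{-1}$ on the $L^{1}$--term. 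To prove the model estimate I would split $v=(v-Pv)+Pv$, where $P\colon\BVA(\mathcal{C})\to N(\A)$ is a linear projection coming from a Poincar\'e-type inequality $\|v-Pv\|_{L^{1}(\mathcal{C})}\le C\|\A v\|_{L^{1}(\mathcal{C})}$; such an inequality is available precisely because $\A$ is $\setC$--elliptic, equivalently $N(\A)$ is finite dimensional (Theorem~\ref{thm:opA-equiv}). As $Pv$ then lies in a fixed finite dimensional space of polynomials of bounded degree, $\|Pv\|_{L^{1}(\Gamma)}\le C\|Pv\|_{L^{1}(\mathcal{C})}\le C(\|v\|_{L^{1}(\mathcal{C})}+\|\A v\|_{L^{1}(\mathcal{C})})$ by equivalence of norms on $N(\A)$, so the problem reduces to bounding $\|v-Pv\|_{L^{1}(\Gamma)}$ by $\|\A v\|_{L^{1}(\mathcal{C})}$, i.e.\ to \emph{reconstructing the boundary values of the reduced field from $\A v$ alone}. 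The hard part will be exactly this reconstruction: the classical Green's operator is of no use, since $v\sim k_{\A}*(\A v)$ with $|k_{\A}(y)|\sim|y|^{1-n}$ and $|y|^{1-n}\notin L^{1}(\R^{n-1})$ — the obstruction already recorded in the introduction — and for $\A=\mathcal{E}^{D}$ even the cone-of-line-integrals device that disposes of $\bd$ breaks down. I would attack it by exploiting the finite dimensional, bounded-degree polynomial nature of $N(\A)$: integrating $\A v$ along the rays of the interior cone at points of $\Gamma$ (of uniform opening since $\gamma$ is Lipschitz) and correcting by $Pv$, one should recover $v$ on $\Gamma$; the degree--$\ge2$ case, already present for $\A=\mathcal{E}^{D}$ in dimensions $n\ge3$, is the genuinely new difficulty, where the straightforward first-order line-integral identity must be replaced by a higher-order one and the resulting polynomial ambiguity absorbed into the $Pv$--correction. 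A more robust — if more technical — alternative is a dyadic iteration of the Poincar\'e inequality on half-cubes shrinking to $\Gamma$, the decisive point again being that all intermediate projections live in the \emph{same} finite dimensional space $N(\A)$.

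It remains to globalise and to pass to $\BVA(\Omega)$. Covering $\partial\Omega$ by finitely many balls in which it is a Lipschitz graph, so that $\A$ remains, after a rotation, a constant-coefficient $\setC$--elliptic operator, one transfers the scale-$r$ model estimate to the boundary pieces and sums against a subordinate partition of unity, obtaining $\|\trace v\|_{L^{1}(\partial\Omega)}\le C(\|v\|_{L^{1}(\Omega)}+|\A v|(\Omega))$ and, more locally, $\|\trace v\|_{L^{1}(\partial\Omega)}\le C(\rho^{-1}\|v\|_{L^{1}(\Omega_{\rho})}+\|\A v\|_{L^{1}(\Omega_{\rho})})$ for the collar $\Omega_{\rho}=\{x\in\Omega:\dist(x,\partial\Omega)<\rho\}$, valid for $v\in C^{\infty}(\overline{\Omega};\R^{N})$. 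Now, given $u\in\BVA(\Omega)$, pick $u_{j}\in C^{\infty}(\Omega)\cap\WA(\Omega)$ with $u_{j}\to u$ strictly in $\BVA(\Omega)$; the collar estimate applied to $u_{j}-u_{k}$ bounds $\|\trace u_{j}-\trace u_{k}\|_{L^{1}(\partial\Omega)}$ by $\rho^{-1}\|u_{j}-u_{k}\|_{L^{1}(\Omega)}$ plus $\|\A u_{j}-\A u_{k}\|_{L^{1}(\Omega_{\rho})}$, and since $|\A u_{j}|\rightharpoonup^{*}|\A u|$ on $\Omega$ under strict convergence while $\bigcap_{\rho>0}\overline{\Omega_{\rho}}=\emptyset$ and $|\A u|$ is finite on $\Omega$ (so $|\A u|(\Omega_{\rho})\to0$), one first lets $j,k\to\infty$ for fixed $\rho$ and then $\rho\to0$ to conclude that $(\trace u_{j})_{j}$ is Cauchy in $L^{1}(\partial\Omega;\mathcal{H}^{n-1})$. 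Setting $\trace u$ to be its limit, one checks independence of the approximating sequence, consistency with the pointwise restriction on $C^{0}(\overline{\Omega})$, and strict continuity, which gives~(i)--(ii) and, with the first paragraph, also~(iii). The extension to NTA domains with Ahlfors regular boundary in Theorem~\ref{thm:main1b} requires in addition replacing the interior cones in the model estimate by the corkscrew/Harnack-chain geometry.
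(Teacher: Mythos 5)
Your overall plan is sound and identifies the right structural tools: finite dimensionality of $N(\A)$ as a substitute for the fundamental theorem of calculus, the Poincar\'e inequality obtained from it, first-order scaling to produce the collar estimate with the $\rho^{-1}$ weight, and a Cauchy argument in $L^1(\partial\Omega)$ against strict convergence. Parts~(i) and~(iii) are dispatched essentially as in the paper, and your cutoff-at-$\rho$ Cauchy argument is, modulo a Portmanteau/outer-regularity step, the same mechanism as Lemma~\ref{lem:trace-strict}.

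However, the core step --- the trace inequality $\|v\|_{L^1(\Gamma)}\lesssim\|v\|_{L^1(\mathcal{C})}+\|\A v\|_{L^1(\mathcal{C})}$ for smooth $v$ --- is left open, and neither of your two sketched routes is yet a proof. The cone-of-line-integrals idea, even in a ``higher-order'' corrected form, is precisely what the paper rules out for $\mathcal{E}^D$ in the introduction: integrating $\A v$ along rays controls only a linear slice of $v$ at each boundary point, and once $N(\A)$ contains polynomials of degree $\geq 2$ there is no analogue of the Strang--Temam cone estimate with which to absorb the resulting ambiguity into a $P v$-correction. Your second route --- dyadic iteration of Poincar\'e projections towards $\Gamma$ with ``all projections living in the same finite-dimensional space'' --- is the right idea and is essentially what the paper carries out via the operators $T_j u$ in \eqref{eq:def-Tj}: one needs a Whitney-type family of reflected balls $B_{j,k}^\sharp$, local $L^2$-projections $\Pi_{j,k}$ onto $N(\A)$, Harnack chains to compare neighbouring projections (Lemma~\ref{lem:Pjk-est}, in particular part~\ref{itm:Pjk-est3}, which combines the $L^1$--$L^\infty$ inverse estimate on $N(\A)$ with chain Poincar\'e estimates), a telescoping estimate $\|\trace T_{j+1}u-\trace T_ju\|_{L^1(\partial\Omega)}\lesssim|\A u|(U_{j-k_0}\setminus U_{j+k_0})$ (Lemma~\ref{lem:trace-cauchy}), and Ahlfors regularity to convert ball sizes into surface measure. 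None of these are automatic, and ``iterating Poincar\'e'' alone does not produce a boundary estimate from an interior one: the boundary has measure zero, so the essential point is the convergence of the projections \emph{uniformly} along the dyadic scales, which is exactly what Lemma~\ref{lem:Pjk-est}\ref{itm:Pjk-est3} delivers. Note also that the paper never proves a trace inequality for smooth functions and then extends; it defines $\trace u := \lim_j\trace(T_j u)$ directly on $\BVA(\Omega)$, avoiding the need for a density result up front.

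This points to a second, smaller gap in your proposal: to run your reduction you need density of $\BVA(\Omega)\cap C^0(\overline{\Omega})$ in $\BVA(\Omega)$ for the (area-)strict topology. The interior-smooth density you invoke ($C^\infty(\Omega)\cap\WA(\Omega)$ dense, Theorem~\ref{thm:smoothapprox}) produces approximants that need not have classical traces, so your model estimate for $C^\infty(\overline{\mathcal{C}})$ does not apply to them. The paper establishes the needed up-to-the-boundary approximation (Lemma~\ref{lem:approx-COmega-bar}) only after constructing the $T_j$'s, since it uses them; in your framework you would have to supply it independently, e.g.\ by a Lipschitz-graph translate-and-mollify argument as for $\setBV$, and say so explicitly.
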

Regarding sufficiency, the core issue is how to replace the use of the fundamental theorem of calculus by that of $\mathbb{C}$-ellipticity. As a main consequence of the latter, we will employ the nullspace of $\mathbb{C}$-elliptic operators being finite dimensional. 
Using local projections
onto the nullspace $N(\opA)$ close to the boundary, we construct
suitable approximations of~$u \in \BVA(\Omega)$ that have classical
traces. The limit of these traces provide us with the trace of~$u$. In
particular, the projections to the finite dimensional nullspace replace the
fundamental theorem of calculus approach as used in~\cite{ts,Baba}.\\
In addition to Theorem~\ref{thm:main1b} we will show in Theorem~\ref{thm:no-trace} and
Remark~\ref{rem:no-trace} that if~$\opA$ is not $\setC$-elliptic, then
in general there is no trace operator from $\BVA(\Omega)$ to
$L^1(\partial \Omega;\mathcal{H}^{n-1})$. In particular, the existence of
$L^1(\partial \Omega;\mathcal{H}^{n-1})$--traces on arbitrary bounded Lipschitz domains $\Omega\subset\R^{n}$ is equivalent to $\setC$-ellipticity of $\A$. This conclusion also identifies the infinite dimensional nullspace of $\A$ as the reason for the failure of the trace embedding of $\sobo^{\mathcal{E}^{D},1}(\Omega)$ into $\lebe^{1}(\partial\Omega;\mathcal H^{n-1})$ for $n=2$ (cp.~ Example~\ref{ex:opexamp} ~\ref{itm:opexamp3}).
As a consequence of Theorem~\ref{thm:main1} we also obtain a version
of the Gau{\ss}-Green theorem, see Theorem~\ref{thm:gauss-green}, and
the gluing theorem, see Corollary~\ref{cor:gluing}. Let us also remark that Theorem~\ref{thm:main1}  includes both the trace theorems for the spaces $\bv$ and $\bd$.

The relation between the condition of $\mathbb{C}$-ellipticity and \textsc{Van Schaftingen}'s elliptic and cancelling condition will be investigated in detail in the follow-up \cite{GmeRai17} to this paper by \textsc{Raita} and the third author; among others, there will be shown that $\mathbb{C}$-ellipticity implies \textsc{Van Schaftingen}'s condition but in general \emph{not} vice versa. In this sense and as might be anticipated, $\lebe^{1}$-boundary traces require a stronger condition on $\A$.

\subsection{Variational problems}\label{sec:varprob}
As a concluding application of the trace theorem from above, we address the Dirichlet problem for linear growth functionals involving operators $\A$. To be precise, we are interested in the
%
%
minimisation of 
functionals of the form
\begin{align}\label{eq:fctl1}
  \mathfrak{F}[u]:=\int_{\Omega}f(x,\A u)\dif x 
\end{align}
over a class of maps $u\colon\Omega\to\R^{N}$ subject to Dirichlet
boundary data $u=u_{0}$ on $\partial\Omega$. Here
$f\colon\Omega\times\R^{N\times n}\to\R_{\geq 0}$ is a given
variational integrand
for which we suppose the linear growth assumption
\begin{align}\label{eq:1growth}
     c_1|z|&\leq f(x,z)\leq c_2|z|+c_3
                       \qquad\text{for
                       all}\;x\in\Omega\;\text{and}\;z\in\R^{N\times n}.
\end{align}
Additionally, we assume that our integrand $f$
is~\emph{$\opA$-quasiconvex}
(in a sense to specified in
Section~\ref{sec:variational-problems}, also see \cite{FonMul99,DacAfree}). Our objective here is to
minimise $\mathfrak{F}$ over the Dirichlet class $u_0 +
\WA_0(\Omega)$, which are the $\WA(\Omega)$-functions whose traces
agree with the given boundary datum~$u_0$.
From the treatment of the Dirichlet problem on $\bv$ (see
\cite{GiaModSou79,AmbFusPal00}) it is clear that the 
functional should be considered on the class of~$\setBV$--maps on a larger
Lipschitz domain~$U$. More precisely, we need to consider the weak*--lower
semi-continuous envelope of~$\mathfrak{F}$ on $\setBV(U)$.
Whereas in the convex situation one can make use of the classical
results due to Reshetnyak \cite{Resh}, the quasiconvex case is
substantially more involved. The sequentially weak*-lower
semicontinuous envelope $\overline{\mathfrak{F}}$ of $\mathfrak{F}$
on~$\setBV(\Omega)$ (so $\A=\nabla$) was characterized in
\cite{AmbDal92,FoMu91}. The corresponding issue for the
symmetric-quasiconvex (so ${\A=\mathcal{E}}$) situation was resolved
in~\cite{Rindler11}.
Invoking the recent outstanding generalisation of Alberti's rank
one-theorem~\cite{DePRin16}, the weak*--lower semicontinuity result
of~\cite{ArrDePRin17} and the area-strict continuity
of~\cite{KriRin10}, we give a precise characterization of the
weak*-lower semicontinuous envelope $\overline{\mathfrak{F}}$
on~$\BVA(\Omega)$, see Proposition~\ref{prop:Fbar-extension}.

In consequence, a merger with Theorem~\ref{thm:main1} allows us to
formulate the minimisation problem with Dirichlet data~$u_0$ purely in
terms of~$\BVA(\Omega)$, see Corollary~\ref{cor:Fu0-lsc}. We demonstrate both the
existence of minima and the absence of a Lavrentiev-gap with respect
to the Dirichlet class $u_0+\WA_0(\Omega)$, see Thm. \ref{thm:main2}.
\subsection{Organisation of the paper}
The paper is organised as follows. In
Section~\ref{sec:funct-bound-opa} we fix notation, introduce the assumptions on the
differential operators~$\opA$ and collect elementary implications for the Sobolev--type 
spaces~$\WA(\Omega)$ and the spaces of functions of bounded $\A$--variation~$\BVA(\Omega)$. In
Section~\ref{sec:projpoinc} we introduce local projection operators onto
the nullspace~$N(\opA)$ on balls and derive \Poincare{}--type inequalities. In
Section~\ref{sec:traces}, we construct the trace
operator~$\trace\,:\, \BVA(\Omega) \to L^1(\partial \Omega;\mathcal{H}^{n-1})$ and thereby give the proof of Theorem~\ref{thm:main1}. Moreover, we establish a Gau{\ss}--Green formula and a gluing lemma for $\bvA$--maps. The final Section~\ref{sec:variational-problems} is dedicated to the existence
of~$\BVA$--minimisers of $\opA$--quasiconvex variational problems with linear growth subject to given Dirichlet boundary data.
{\small
\subsection*{Acknowledgments}

The authors wish to thank Jan Kristensen for numerous helpful
discussions, comments and reading a preliminary version of this paper. The third author further acknowledges
financial support and hospitality of the Max--Planck--Institut f\"{u}r
Mathematik in den Naturwissenschaften during a research stay in
Leipzig in May 2017, where parts of this project were concluded.}

\newpage

\section{Functions of Bounded $\opA$-Variation}
\label{sec:funct-bound-opa}

In this section we introduce spaces of functions of bounded
variation associated with a differential operator~$\opA$.
\subsection{General Notation}
To avoid too many different constants throughout, we write
$a \lesssim b$ if there exists a constant~$c$ (which does not depend
on the crucial quantities) with $a \leq c\, b$. If $a \lesssim b$ and
$b \lesssim a$, we also write~$a \eqsim b$. By $\ell(B)$ we denote the
diameter of a ball~$B$ and by~$\abs{B}$ its $n$--dimensional Lebesgue measure. We write
$d(\cdot,\cdot)$ for the usual euclidean distance. For the euclidean inner product of $a,b\in\R^{m}$ we use the equivalent notations $\langle a,b\rangle$ or $a\cdot b$. Given $f\in\lebe_{\locc}^{1}(\R^{n};\R^{K})$ and a measurable subset $U\subset\R^{n}$ with $|U|>0$, we use the equivalent notations
\begin{align*}
\dashint_{U} f(x) \,dx := \langle f\rangle_{U}:=\abs{U}^{-1} \int_U f(x)\,dx
\end{align*}
for the mean value integral. Lastly, for notational simplicity, we shall often surpress the possibly vectorial target space when dealing with function spaces and, e.g., write $\lebe^{1}(\R^{n})$ instead of $\lebe^{1}(
\R^{n};\R^{N})$, but this will be clear from the context. 
\subsection{Function Space Setup}
\label{sec:funct-analyt-setup}
Let $\A$ be given by \eqref{eq:form}. The corresponding \emph{dual (or formally adjoint) operator} $\opA^*$ is the differential operator on $\R^{n}$ from $\RK$ to $\RN$ given
by
\begin{align}
  \opA^*:=\sum_{\alpha=1}^n \A_\alpha^{*}\partial_\alpha,
\end{align}
where each $\A_\alpha^{*}$ is the adjoint matrix of $\A_\alpha$. For an open domain $\Omega\subset\R^{n}$ we define the \emph{Sobolev
  space $\WA(\Omega)$ associated to the operator~$\opA$} by
\begin{align}
  \label{eq:def-W1A}
  \sobo^{\A,1}(\Omega)  =  \sobo^{\A,1}(\Omega;\RN)
  &:=\big\{u\in\lebe^{1}(\Omega;\RN)\colon\;\A 
    u\in\lebe^{1}(\Omega;\RK) \big\}.
\end{align}
This is a Banach space with respect to the norm
\begin{align}
  \label{eq:def-W1A-norm}
  \norm{u}_{\WA(\Omega)} &:= \norm{u}_{L^1(\Omega)} + \norm{\opA u}_{L^1(\Omega)}.
\end{align}
We moreover define the \emph{total $\A$--variation} of
$u\in\lebe_{\locc}^{1}(\Omega;\RN)$ by
\begin{align}
  \label{eq:deftotalAvariation}
  |\A u|(\Omega):=\sup\Bigg\{\int_{\Omega}\langle
  u,\opA^*\varphi\rangle\dif
  x\colon\;\varphi\in\hold_{c}^{1}(\Omega;\RK),\;|\varphi|\leq
  1\Bigg\} 
\end{align}
and consequently say that $u$ is of \emph{bounded $\A$--variation} if
and only if $u\in\lebe^{1}(\Omega;\RN)$ and $|\A
u|(\Omega)<\infty$.
Denoting $\mathcal{M}(\Omega;\RK)$ the finite $\RK$--valued Radon measures on
$\Omega$, by the Riesz
representation theorem this amounts to
\begin{align} \label{eq:def-BVA}
  \bvA(\Omega):=\big\{u\in\lebe^{1}(\Omega;\R^{N})\colon\;\A
  u\in\mathcal{M}(\Omega;\RK) \big\}.  
\end{align}
Here, the shorthands $\A u\in\lebe^{1}$ or $\A u\in\mathcal{M}$ above have to be
understood in the sense that the distributional differential
expressions $\A u$ can be represented by $\lebe^{1}$--functions or
Radon measures, respectively. The norm
\begin{align}
  \label{eq:def-BVA-norm}
  \norm{u}_{\BVA(\Omega)} &:= \norm{u}_{L^1(\Omega)} + \abs{\opA u}(\Omega)
\end{align}
makes~$\BVA(\Omega)$ a Banach space. However, due to the lack of good
compactness properties, the norm topology turns out not useful in many
applications and one needs to consider weaker topologies. We now
introduce the canonical generalisations of well--known convergences in
the full-- or symmetric gradient cases, see~\cite{AmbFusPal00}.  Let
$u\in\BVA(\Omega)$ and $(u_k)\subset\BVA(\Omega)$. We say that
\begin{itemize}
\item $(u_k)$ converges to $u$ in the \emph{weak*--sense} (in symbols
  $u_k\wstar u$) if and only if $u_k\to u$ strongly in
  $\lebe^{1}(\Omega;\R^{N})$ and $\A u_k\wstar \A u$ in the weak*--sense of
  $\R^{K}$--valued Radon measures on $\Omega$ as $k\to\infty$.
\item $(u_k)$ converges to $u$ in the \emph{strict sense} (in symbols
  $u_k\strict u$) if and only if $d_{s}(u_k,u)\to 0$ as $k\to\infty$,
  where for $v,w\in\BVA(\Omega)$ we set
  \begin{align*}
    d_{s}(v,w):=\int_{\Omega}|v-w|\dif x + \bigabs{|\A v|(\Omega)-|\A
    w|(\Omega)}. 
  \end{align*}
\item $(u_k)$ converges to $u$ in the \emph{area-strict sense} (in
  symbols $u_k\areato u$) if
  and only if
  \begin{align*}
    \int_{\Omega}\sqrt{1+\left\vert\tfrac{\dif\A
    u_k}{\dif\mathscr{L}^{n}}\right\vert^{2}}\dif\mathscr{L}^{n}
    +|\A^{s}u_k|(\Omega) 
    \to \int_{\Omega}\sqrt{1+\left\vert\tfrac{\dif\A
    u}{\dif\mathscr{L}^{n}}\right\vert^{2}}\dif\mathscr{L}^{n}+|\A^{s}u|
    (\Omega),\qquad k\to\infty, 
  \end{align*}
  where
  $\A v = \tfrac{\dif\A
    v}{\dif\mathscr{L}^{n}}\mathscr{L}^{n}+\tfrac{\dif\A v}{\dif
    |\A^{s}v|}|\A^{s}v|$
  is the Radon--Nikod\v{y}m decomposition of
  $\A v\in\mathcal{M}(\Omega;\RK)$ with respect to the Lebesgue measure $\mathscr{L}^{n}$.
\end{itemize}
Strictly speaking, these notions are reserved for the $\bv$--versions and hence the above notions have to be read as $\A$--weak*, $\A$--strict and $\A$--area strict convergence. However, to keep terminology simple, we tacitly assume that the differential operator $\A$ is fixed throughout and stick to the above terminology.

Note that the $\opA$-variation is sequentially lower semicontinuous
with respect convergence in the weak*--sense, i.e., if
$u_k \wstar u$, then
$|\A u|(\Omega)\leq\liminf_{k\to\infty}|\A u_k|(\Omega)$.  Moreover,
if $u_k \in\BVA(\Omega)$ is a bounded sequence with $u_k \weakto u$ in
$L^1(\Omega;\R^{N})$, then already~$u_k \weakastto u$. Finally, if $\Omega$ is open and bounded with Lipschitz boundary, then it is easy to conclude by the theorem of Banach--Alaoglu that if $(u_{k})\subset\bvA(\Omega)$ is uniformly bounded in the $\bvA$--norm, then there exists $u\in\bvA(\Omega)$ and a subsequence $(u_{k(j)})$ of $(u_{k})$ such that $u_{k(j)}\wstar u$ as $j\to\infty$ in the sense specified above. We shall often refer to this as the \emph{weak*--compactness principle (for $\bvA$)}.

\subsection{Assumptions on the Differential Operator~$\opA$}
\label{sec:assumpt-diff-oper}


For our trace result we need some structure on~$\opA$ which we
introduce now.

Let $\A$ be given by \eqref{eq:form}. Then~$\opA$ induces a bilinear
pairing $\otimes_{\A}\colon\RN\times \Rn\to \RK$ by
\begin{align}
  \label{eq:bilinear}
  v\otimes_{\A} z:=\sum_{\alpha=1}^n z_\alpha\A_\alpha v,\qquad
  \text{for $z \in \Rn$ and $v \in \RN$}.  
\end{align} 
For all $\varphi\in\hold^{1}(\R^{n})$ and
$v\in\hold^{1}(\R^{n};\RN)$ we have
\begin{align}
  \label{eq:productrule}
  \A(\varphi v)=\varphi\,\A v+v \otimes_{\A}\nabla\varphi. 
\end{align}
Note that if~$\opA$ is the usual gradient, then~$\otimes_\opA$ can be
identified with the usual dyadic product $\otimes$, and if $\opA$ is the symmetric gradient, then $\otimes_{\opA}$ is given by the symmetric tensor product $\odot$. 

Recalling the notions of $\R$-- and $\mathbb{C}$--ellipticity from Section~\ref{sec:mainresults}, we now pass on to a more detailled discussion and begin with linking them to the \emph{type--(C)} condition as introduced in \cite{Kal94}. 
\begin{remark}
  \label{rem:Kal94}
  The operator~$\opA$ is $\setC$-elliptic if and only if it is of
  type~$(C)$ in the sense of~\cite{Kal94}. More precisely, since $\opA_\alpha[\xi]$ is a linear operator from $\RN$ to $\RK$ for each $\xi\in\R^{n}$,
  we find coefficients $\opA_{\alpha,j,k}$ such that
  \begin{align*}
    \big(\opA[\xi]\eta\big)_k  &=: \sum_{\alpha=1}^n \sum_{j=1}^N
                                 \opA_{\alpha,j,k} \xi_\alpha  \eta_j.
  \end{align*}
  for every for $\xi \in \Rn$ and $\eta \in \RN$. Then
  \begin{align*}
    \mathbb{P}_{j,k} u := \sum_{\alpha=1}^n
    \opA_{\alpha,j,k} \partial_\alpha u_j
  \end{align*}
  for $k=1,\dots,K$ is the family of scalar differential operators as used
  in~\cite{Kal94}. The corresponding symbols are
  \begin{align*}
    \mathbb{P}_{j,k}(\xi) := \sum_{\alpha=1}^n
    \opA_{\alpha,j,k} \xi_\alpha
  \end{align*}
  with $j=1,\dots, N$ and $k=1,\dots,K$.
  Now according to~\cite{Kal94} the family~$(\mathbb{P}_k)_k$ is of
  type~$(C)$ if and only if $(\mathbb{P}_{j,k}(\xi))_{j,k}$ has rank~$K$
  for all $\eta \in \setC^n \setminus \set{0}$. 
  Since
  \begin{align*}
    \sum_{j=1}^N \sum_{k=1}^K \mathbb{P}_{j,k}(\xi) \eta_j = \sum_{\alpha=1}^n
    \sum_{j=1}^N \sum_{k=1}^K  \opA_{\alpha,j,k} \xi_\alpha \eta_j = \opA[\xi]\eta
  \end{align*}
  this is equivalent to the injectivity of~$\opA[\xi]$ for all
  $\eta \in \setC^N \setminus \set{0}$, which is exactly the
  $\setC$-ellipticity of~$\opA$.
\end{remark}
We now turn to some examples which shall refer to frequently.
\begin{example}\label{ex:opexamp}
In what follows, we carefully examine the gradient, symmetric and trace--free symmetric gradient operators. As these typically map $\R^{N}$ to the matrices $\R^{N\times n}$ instead of a vector in $\RK$, we henceforth put $K=Nn$ and identify
  $\RK$ with $\R^{N \times n}$.
  \begin{enumerate}
  \item \label{itm:opexamp1} Let $\opA u := \nabla u$. Then $N(\opA)$
    just consists of the constants and
    \begin{align*}
      (v \otimes_\nabla z)_{j,k} &= v_j z_k.
    \end{align*}
    $\opA$ has a finite dimensional nullspace and is~$\setC$-elliptic,
    since
    \begin{align*}
      \abs{\A[\xi]\eta}^2 &= \abs{\xi}^2 \abs{\eta}^2.
    \end{align*}
  \item \label{itm:opexamp2} Let 
    $\A u:=\mathcal{E}(u) := \frac 12 (\nabla u + (\nabla u)^T)$ with
    $N=n$. Then $N(\mathcal{E})$ just consists of the generators of rigid
    motions, i.e.,
    \begin{align*}
      N(\mathcal{E}) 
      &= \set{ x \mapsto Ax + b\,:\, \text{$A \in \R^{n \times
        n}$, $A=-A^T$, $b \in \Rn$}} 
    \end{align*}
    and 
    \begin{align*}
      (v \otimes_{\mathcal{E}} z)_{j,k} &= \tfrac 12 (v_j z_k + v_k z_j).
    \end{align*}
    $\mathcal{E}$ has a finite dimensional nullspace and
    is~$\setC$-elliptic, since
    \begin{align*}
      \abs{\A[\xi]\eta}^2 &= \tfrac 12\abs{\xi}^2 \abs{\eta}^2 + \tfrac
                           12 \abs{\skp{\xi}{\eta}}^2.
    \end{align*}
  \item \label{itm:opexamp3} Let 
    $\A u:=\mathcal{E}^D(u) = \frac 12 (\nabla u + (\nabla u)^T)- \frac 1n
    \divergence(u) \identity_n$ with $N=n$.  Then
    \begin{align*}
      (v \otimes_{\mathcal{E}^D} z)_{j,k} &= \tfrac 12 (v_j z_k + v_k z_j) -
                                 \frac{1}{n} \delta_{j,k} \sum_{l=1}^n v_l z_l
    \end{align*}
    and
    \begin{align*}
      \abs{\A[\xi]\eta}^2 &= \tfrac 12\abs{\xi}^2 \abs{\eta}^2 + \tfrac
                           12 \abs{\skp{\xi}{\eta}}^2 - \tfrac{1}{n}
                           \skp{\xi}{\bar{\eta}}^2. 
    \end{align*}
    If $n \geq 3$, then $\opA$ is $\setC$-elliptic and it has the
    finite dimensional nullspace
    \begin{align*}
      \qquad\quad  N(\mathcal{E}^D) 
      &= \bigset{ x \mapsto Ax + b+ (2(a \cdot x) x- \abs{x}^2 a)
        \,:\, \text{$A\in \setR^{n\times
        n}$, $A=-A^T$, $a,b \in \Rn$}}. 
    \end{align*}
    Elements of~$N(\mathcal{E}^D)$ are also known as \emph{conformal
      killing vectors} \cite{Dain06}.

    If $n=2$, then $\opA$ is only $\setR$-elliptic, but
    not~$\setC$-elliptic. Indeed, $\opA[\xi]\eta=0$ for $\xi=(1,i)^T$
    and $\eta=(1,-i)^T$. Moreover, the nullspace $N(\opA)$ is of
    infinite dimension:  Indeed, if we identify $\setR^2 \cong \setC$,
    then the kernel of~$\mathcal{E}^D$ consists of the holomorphic
    functions. We will substantially use this property in the proofs of
    Lemma~\ref{lem:FDNchar} and Theorem~\ref{thm:no-trace}.
  \end{enumerate}
\end{example}
We now draw some consequences of the single ellipticity conditions and link them to the finite dimensionality of the nullspace of $\A$.
\begin{lemma}
  \label{lem:upperlower}
  Let $\opA$ be $\setK$-elliptic with $\setK=\setR$ or
  $\setK=\setC$. Then there exists two constants
  $0<\kappa_1\leq \kappa_2<\infty$ such that
  \begin{align*}
    \kappa_1|v|\,|z|\leq |v\otimes_{\A}z|\leq \kappa_2|v|\,|z| \qquad
    \text{for all $v\in\setK^N$ and  $z\in\setK^n$
    }.
  \end{align*} 
\end{lemma}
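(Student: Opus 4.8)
The statement is a quantitative, bilinear reformulation of the injectivity of the symbol maps, and I would prove it by a compactness argument. The starting point is the elementary identity $v \otimes_\A z = \A[z]v$, which is immediate from \eqref{eq:bilinear} and \eqref{eq:bilinear-1}: thus $v\otimes_\A z = 0$ with $z \neq 0$ forces $v$ to lie in the kernel of $\A[z]$, and $\setK$-ellipticity says precisely that this kernel is trivial for every $z \in \setK^n\setminus\{0\}$. So the whole point of the lemma is to turn these (finitely many, one per direction) injectivity statements into a single uniform bound, which is exactly what a compactness argument on the unit spheres delivers.

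For the upper bound I would use that $(v,z)\mapsto v\otimes_\A z$ is $\setK$-bilinear, hence continuous on $\setK^N\times\setK^n$, and scales as $(\lambda v)\otimes_\A(\mu z) = \lambda\mu\,(v\otimes_\A z)$ for $\lambda,\mu\in\setK$. Consequently, for $v,z\neq 0$ one has $\abs{v\otimes_\A z} = \abs{v}\,\abs{z}\,\abs{\hat v\otimes_\A \hat z}$ with $\hat v := v/\abs{v}$ and $\hat z := z/\abs{z}$ of unit length, and it then suffices to set $\kappa_2 := \sup\{\abs{\hat v\otimes_\A \hat z}:\ \hat v\in\setK^N,\ \hat z\in\setK^n,\ \abs{\hat v}=\abs{\hat z}=1\}$, which is finite because the product of the unit spheres of $\setK^N$ and $\setK^n$ is compact and the map is continuous (the cases $v=0$ or $z=0$ being trivial).

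For the lower bound I would argue by contradiction. Put $\kappa_1 := \inf\{\abs{\hat v\otimes_\A \hat z}:\ \abs{\hat v}=\abs{\hat z}=1\}\geq 0$; by the same compactness and continuity this infimum is attained, say at a pair $(v_0,z_0)$ with $\abs{v_0}=\abs{z_0}=1$. If $\kappa_1 = 0$, then $\A[z_0]v_0 = v_0\otimes_\A z_0 = 0$ with $z_0\in\setK^n\setminus\{0\}$ and $v_0\in\setK^N\setminus\{0\}$, contradicting the injectivity of $\A[z_0]$ guaranteed by $\setK$-ellipticity. Hence $\kappa_1>0$, and the homogeneity recorded above upgrades this to $\abs{v\otimes_\A z}\geq\kappa_1\abs{v}\,\abs{z}$ for all $v\in\setK^N$, $z\in\setK^n$; finally $\kappa_1\leq\kappa_2$ since both are extrema of the same function over the same nonempty set. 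There is no genuine obstacle here; the only point to watch is that for $\setK=\setC$ one must use that $\otimes_\A$ is extended $\setC$-bilinearly (not sesquilinearly), so that $\abs{(\lambda v)\otimes_\A z}=\abs{\lambda}\,\abs{v\otimes_\A z}$ continues to hold for $\lambda\in\setC$ and the relevant unit spheres in $\setC^N$ and $\setC^n$ remain compact.
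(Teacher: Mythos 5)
Your proof is correct and follows essentially the same strategy as the paper's: reduce by homogeneity to unit vectors, invoke compactness of the product of unit spheres and continuity of the bilinear map $\otimes_\A$, and use $\setK$-ellipticity to ensure the infimum is strictly positive. The paper's version is just more terse; you have spelled out the same steps in full.
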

\begin{proof}
  By scaling it suffices to assume $\abs{v} = \abs{z}=1$. We
  have~$\abs{v \otimes_\opA z}>0$, since $\opA$ is $\setK$-elliptic. Now the
  claim follows by compactness of $\set{(v,z)\,:\, \abs{v}=\abs{z}=1}$
  and continuity.
\end{proof}

\begin{lemma}
  \label{lem:FDN-R-elliptic}
  Let $\opA$ have a finite dimensional nullspace. Then~$\opA$ is $\setR$-elliptic.
\end{lemma}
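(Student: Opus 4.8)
The plan is to argue by contraposition: I will show that if $\opA$ fails to be $\setR$-elliptic, then $N(\opA)$ is infinite dimensional. So suppose $\opA$ is not $\setR$-elliptic. By definition there exist $\xi\in\R^{n}\setminus\{0\}$ and $v\in\R^{N}\setminus\{0\}$ with $\opA[\xi]v = v\otimes_{\opA}\xi = 0$. The idea is to exploit $\xi$ as a direction along which $\opA$ sees no variation and thereby manufacture infinitely many independent nullspace elements.

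Concretely, for each $k\in\N_{0}$ I would set $u_{k}(x):=\langle\xi,x\rangle^{k}\,v$, a polynomial map $\R^{n}\to\R^{N}$. A direct computation using $\partial_{\alpha}u_{k}(x) = k\langle\xi,x\rangle^{k-1}\xi_{\alpha}v$ gives
\begin{align*}
  \opA u_{k}(x) = \sum_{\alpha=1}^{n}\opA_{\alpha}\partial_{\alpha}u_{k}(x)
  = k\langle\xi,x\rangle^{k-1}\sum_{\alpha=1}^{n}\xi_{\alpha}\opA_{\alpha}v
  = k\langle\xi,x\rangle^{k-1}\,(v\otimes_{\opA}\xi) = 0,
\end{align*}
so each $u_{k}$ lies in $N(\opA)$ (the computation is classical, hence a fortiori holds in $\mathcal{D}'(\R^{n};\R^{N})$). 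Since $v\neq 0$ and $\xi\neq 0$, the functions $x\mapsto\langle\xi,x\rangle^{k}$, $k\in\N_{0}$, are linearly independent polynomials, so the $u_{k}$ are linearly independent elements of $N(\opA)$. Hence $\dim N(\opA)=\infty$, contradicting the hypothesis; this proves the contrapositive and therefore the lemma.

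There is no real obstacle here: the only points requiring a word of care are that the identity $\opA u_{k}=0$ is to be read in the distributional sense (automatic, as $u_{k}$ is a polynomial) and that infinitely many linearly independent null solutions genuinely arise, for which the family of plane-wave polynomials $\langle\xi,x\rangle^{k}v$ suffices. One could equally use $u(x)=g(\langle\xi,x\rangle)v$ for an arbitrary smooth scalar $g$ to make the infinite dimensionality transparent, but the polynomial choice keeps the verification purely algebraic.
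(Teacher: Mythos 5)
Your proof is correct and follows essentially the same strategy as the paper: both arguments produce an infinite-dimensional family of ``plane-wave'' null solutions of the form $x\mapsto g(\skp{\xi}{x})v$ (the paper takes $g\in C^{1}_{0}(\setR)$, you take $g(t)=t^{k}$), and conclude by contradiction. The only cosmetic difference is your choice of the polynomial family, which you yourself note is interchangeable with the paper's choice.
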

\begin{proof}
  We proceed by contradiction. Assume that $\opA$ is
  not $\setR$-elliptic. Then there exists
  $\xi \in \Rn \setminus \set{0}$ and $\eta \in \RN \setminus \set{0}$
  with $\opA[\xi]\eta=0$.  For every $f \in \hold_{c}^{1}(\setR;\setR)$ we
  define~$u_f(x) := f(\skp{\xi}{x}) \eta$. Then
  $(\opA u_f)(x) = \opA[\xi] \eta\, f(\skp{\xi}{x}) =
  0$. Since~$\eta\neq 0$ and~$\xi\neq 0$, the mapping~$f \mapsto u_f$
  is injective. Therefore, the set~$\set{u_f\,:\, f \in \hold_{c}^{1}(\setR)}$
  is an infinite dimensional subspace of~$N(\opA)$. This contradicts the fact 
  that $\opA$ has finite dimensional nullspace.
\end{proof}
\begin{lemma}
  \label{lem:FDNchar}
  Let $\opA$ have a finite dimensional nullspace. Then $\opA$ is $\setC$-elliptic.
\end{lemma}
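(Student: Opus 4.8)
The plan is to argue by contradiction in the spirit of the proof of Lemma~\ref{lem:FDN-R-elliptic}, but replacing the single real direction used there by a single complex one. So suppose $\opA$ is not $\setC$-elliptic; then there are $\xi \in \setC^n\setminus\{0\}$ and $\eta \in \setC^N\setminus\{0\}$ with $\opA[\xi]\eta = 0$. Write $\xi = (\xi_1,\dots,\xi_n)$, let $\ell_\xi(x) := \sum_{\alpha=1}^n \xi_\alpha x_\alpha$ be the associated $\setC$-valued linear form on $\setR^n$, and for every integer $k \geq 0$ consider the smooth map $u_k \colon \setR^n \to \setC^N$ given by $u_k(x) := \ell_\xi(x)^k\,\eta$.

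First I would check that every $u_k$ lies in the complexified kernel of $\opA$. Since $\partial_\alpha \ell_\xi = \xi_\alpha$, the product rule yields
\[
  \opA u_k = \sum_{\alpha=1}^n \opA_\alpha\,\partial_\alpha u_k = k\,\ell_\xi^{\,k-1}\sum_{\alpha=1}^n \xi_\alpha \opA_\alpha \eta = k\,\ell_\xi^{\,k-1}\,\opA[\xi]\eta = 0.
\]
Next I would show that the family $\{u_k\}_{k \geq 0}$ is linearly independent over $\setC$. Pick $\alpha_0$ with $\xi_{\alpha_0} \neq 0$; restricting a finite linear combination $\sum_k c_k u_k$ that vanishes identically to the line $t \mapsto t\,e_{\alpha_0}$ (with $e_{\alpha_0}$ the $\alpha_0$-th standard basis vector) forces $\bigl(\sum_k c_k \xi_{\alpha_0}^{\,k} t^k\bigr)\eta = 0$ for all $t \in \setR$, and since $\eta \neq 0$ and the monomials $t^k$ are linearly independent, all $c_k$ vanish. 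Hence $V := \operatorname{span}_{\setC}\{u_k : k \geq 0\}$ is an infinite-dimensional $\setC$-subspace of $\{v \in \mathcal{D}'(\setR^n;\setC^N) : \opA v = 0\}$.

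Finally I would pass back to the real nullspace $N(\opA)$ of \eqref{eq:FDN}. Since the coefficient matrices $\opA_\alpha$ are real, $\opA(\Re v) = \Re(\opA v)$ and $\opA(\Im v) = \Im(\opA v)$; hence the real linear span $W := \operatorname{span}_{\setR}\{\Re v, \Im v : v \in V\}$ is contained in $N(\opA)$. But $V \subseteq W + \imag W$ and a real basis of $W$ is also a complex basis of $W + \imag W$, so the real dimension of $W$ is at least the complex dimension of $V$, i.e.\ infinite, contradicting the finite dimensionality of $N(\opA)$. The only genuine differences from Lemma~\ref{lem:FDN-R-elliptic} are that $\xi$ is complex, which is why one works with the complex linear form $\ell_\xi$ and its integer powers instead of a function of one real variable, and that $\eta$ may be complex, which forces the final splitting into real and imaginary parts; neither of these is a serious obstacle, so I expect the argument above to go through essentially verbatim.
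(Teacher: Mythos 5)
Your proof is correct, and it takes a genuinely different and more elementary route than the paper's. The paper first invokes Lemma~\ref{lem:FDN-R-elliptic} to secure $\setR$-ellipticity, uses that to show $\xi_1,\xi_2$ and $\eta_1,\eta_2$ are each linearly independent, and then constructs kernel elements $h_f=\sigma\circ f\circ\tau$ for arbitrary holomorphic $f\in\mathcal{O}(\setC)$, where the Cauchy--Riemann equations force $\opA h_f=0$ and the linear independence facts make $f\mapsto h_f$ injective. Your approach skips all of that: the identity $\opA u_k = k\,\ell_\xi^{\,k-1}\,\opA[\xi]\eta = 0$ is a one-line product-rule computation requiring no preliminary ellipticity at all, linear independence of $\{u_k\}$ is immediate by restriction to a single real line, and in particular your argument subsumes Lemma~\ref{lem:FDN-R-elliptic} itself as the special case $\xi\in\Rn$, $\eta\in\RN$. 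The only care needed is the passage from a $\setC^N$-valued complex kernel back to the real nullspace $N(\opA)$, which you handle correctly via $\Re$ and $\Im$; just note that a real basis of $W$ is in general a complex \emph{spanning} set of $W+\imag W$ rather than a basis, though the chain $\dim_\setC V\le\dim_\setC(W+\imag W)\le\dim_\setR W$ you actually rely on remains valid. What the paper's heavier holomorphic machinery buys is reused downstream: the proof of Theorem~\ref{thm:no-trace} plugs the non-polynomial choice $f(z)=1/z$ into the same $\sigma\circ f\circ\tau$ construction to exhibit a $\BVA$ function with no $L^1$ boundary trace, so setting up the full $\mathcal{O}(\setC)$ version here streamlines that later argument.
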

\begin{proof}
  Since~$\opA$ has finite dimensional nullspace, it is
  $\setR$-elliptic by Lemma~\ref{lem:FDN-R-elliptic}.

  We proceed by contradiction, so assume that~$\opA$ is not
  $\setC$-elliptic. Then there
  exists~$\xi \in \setC^n \setminus \set{0}$ and
  $\eta \in \setC^N \setminus \set{0}$ with
  $0 = \opA[\xi]\eta = \eta \otimes_{\opA} \xi$. We split~$\xi$
  and~$\eta$ into their real and imaginary parts by
  $\xi=: \xi_1 + i \xi_2$ and $\eta =: \eta_1 + i \eta_2$. Then
  $\opA[\xi]\eta=0$ implies
  \begin{alignat}{2}\label{eq:CR}
    \mathbb{A}[\xi_{1}]\eta_{1}-\mathbb{A}[\xi_{2}]\eta_{2} &=0
    \qquad \text{and} \qquad &
    \mathbb{A}[\xi_{1}]\eta_{2}+\mathbb{A}[\xi_{2}]\eta_{1} &=0.
  \end{alignat}
  We will show not that $\xi_1$ and $x_2$, resp. $\eta_1$ and
  $\eta_2$, are linearly independent.

  We begin with the linear independence of~$\xi_1$ and $\xi_2$.  If
  $\xi_1=0$, then $\xi_2 \neq 0$ and then the $\setR$-ellipticity
  of~$\opA$ and~\eqref{eq:CR} implies~$\eta_1=\eta_2=0$, which
  contradicts~$\eta\neq 0$. By the same argument, also~$\xi_2=0$ is
  not possible. Hence, we have $\xi_1 \neq 0$ and $\xi_2 \neq 0$. We
  now show the linear independence of~$\xi_1$ and $\xi_2$ by
  contradiction, so let us assume that~$\xi_2 = \lambda \xi_1$ with
  $\lambda \neq 0$. Then it follows from~\eqref{eq:CR} that
  \begin{align*}
    \opA[\xi_1]\eta_1 
    &= 
      \opA[\xi_2]\eta_2 = \lambda \opA[\xi_1] \eta_2 = -\lambda
      \opA[\xi_2]\eta_1 = -\lambda^2 \opA[\xi_1][\eta_1].
  \end{align*}
  This implies $\opA[\xi_1][\eta_1]=0$. Hence by $\setR$-ellipticity
  of~$\opA$ and $\xi_1\neq 0$, we get $\eta_1=0$. Now,~\eqref{eq:CR}
  implies $\opA[\xi_2][\eta_2]=0$, so again the $\setR$-ellipticity
  of~$\opA$ gives~$\eta_2=0$. Overall, $\eta=0$, which is a
  contradiction. This proves that~$\xi_1$ and $\xi_2$ are linearly
  independent.

  The proof of the linear independence of~$\eta_1$ and $\eta_2$ is
  completely analogous. Indeed, $\eta_1 = \gamma \eta_2$ implies
  $\opA[\xi_1]\eta_1 = - \gamma^2 \opA[\xi_1]\eta_1$, so
  $\opA[\xi_1][\eta_1]=0$. As above this implies~$\eta=0$, which is a
  contradiction.

  Let us define now $\tau\,:\, \Rn \to \setC$ and
  $\sigma\,:\, \setC \to \RN$ by
  \begin{align*}
    \tau(x) &:= \skp{\xi}{x}
              = \skp{\xi_1}{x} + i \skp{\xi_2}{x},
              \\
    \sigma(z) &:= \Re(z) \eta_1 - \Im(z) \eta_2.
  \end{align*}
  Let $\mathcal{O}(\setC)$ denote the set of holomorphic
  functions on $\setC$. Then~$\dim (\mathcal{O}(\setC))= \infty$.  Moreover,
  for~$f \in \mathcal{O}(\setC)$ we have $\partial_{\bar{z}} f(z) = 0$
  in the sense of complex derivatives. Let us define
  $h_f \colon \Rn\to \RN$ by $h_f:= \sigma \circ f \circ \tau$. Our
  goal is to prove $\opA h_f =0$. We identify in the
  following~$\setC$ with $\setR^2$.  With the chain rule we conclude
  \begin{align}
    \label{eq:FDNchar1}
    \begin{aligned}
      (\opA h_f)(x) &= \quad \opA[\xi_1] \eta_1 (\partial_1
      f_1)(\tau(x)) - \opA[\xi_1] \eta_2 (\partial_1 f_2)(\tau(x))
      \\
      &\quad\, + \opA[\xi_2] \eta_1 (\partial_2 f_1)(\tau(x)) -
      \opA[\xi_2] \eta_2 (\partial_2 f_2)(\tau(x)) .
    \end{aligned}
  \end{align}
  Using the Cauchy-Riemann equations 
  $\partial_1 f_1 = \partial_2 f_2$ and
  $\partial_1 f_2 = -\partial_2 f_1$ and~\eqref{eq:CR} we get
  \begin{align*}
    (\opA h_f)(x) 
    &= (\opA[\xi_1] \eta_1 -\opA[\xi_2] \eta_2) (\partial_1 f_1)(\tau(x))
    + (\opA[\xi_1] \eta_2+\opA[\xi_2] \eta_1) (\partial_2
      f_1)(\tau(x)) =0.
  \end{align*}
  So for each~$f \in \mathcal{O}(\setC)$, we constructed an
  $h_f \,:\, \Rn \to \RN$ such that $\opA h_f =0$. We need to show
  that $\dim(\set{h_f\,:\, f\in \mathcal{O}(\setC)})=\infty$. For
  this, it suffices to show that the linear mapping $f \mapsto h_f$ is
  injective. Recall that $h_f = \sigma \circ f \circ \tau$. Hence, it
  suffices to show that $\sigma$ is injective and that~$\tau$ is
  surjective. This, however, follows from the fact that $\xi_1$ and
  $\xi_2$, resp. $\eta_1$ and $\eta_2$, are linearly independent.
  This concludes the proof.
\end{proof}
\begin{theorem}
  \label{thm:opA-equiv}
  The following are equivalent.
  \begin{enumerate}
  \item \label{itm:opA-equiv1} $\opA$ has a finite dimensional nullspace.
  \item \label{itm:opA-equiv2} $\opA$ is $\setC$-elliptic.
  \item \label{itm:opA-equiv3} There exists~$l \in \setN$ with $N(\opA) \subset
    \mathscr{P}_l$, where $\mathscr{P}_l$ denotes the set of
    polynomials with degree less or equal to~$l$.
  \end{enumerate}
\end{theorem}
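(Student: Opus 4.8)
The plan is to prove the cyclic chain of implications $\ref{itm:opA-equiv1}\Rightarrow\ref{itm:opA-equiv2}\Rightarrow\ref{itm:opA-equiv3}\Rightarrow\ref{itm:opA-equiv1}$, since two of the three arrows are essentially at hand. The implication $\ref{itm:opA-equiv1}\Rightarrow\ref{itm:opA-equiv2}$ is precisely the content of Lemma~\ref{lem:FDNchar}, so nothing remains to be done there. The implication $\ref{itm:opA-equiv3}\Rightarrow\ref{itm:opA-equiv1}$ is equally cheap: the space $\mathscr{P}_l$ of $\RN$-valued polynomials of degree at most $l$ is finite dimensional (its dimension is $N\binom{n+l}{n}$), and $N(\opA)$ is a linear subspace of it, hence itself finite dimensional. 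So the whole weight of the theorem sits in the remaining arrow.

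The main work is therefore $\ref{itm:opA-equiv2}\Rightarrow\ref{itm:opA-equiv3}$: assuming $\opA$ is $\setC$-elliptic, produce an $l\in\setN$, depending only on $\opA$, with $N(\opA)\subset\mathscr{P}_l$. First I would observe that any $u\in N(\opA)$ is in particular a distributional solution of the scalar elliptic system obtained by composing $\opA$ with a suitable cofactor-type operator. Concretely, $\setC$-ellipticity means $\opA[\xi]\colon\setC^N\to\setC^K$ is injective for all $\xi\in\setC^n\setminus\{0\}$, so the $N\times N$ matrix $\opA[\xi]^{*}\opA[\xi]$ is invertible for all such $\xi$; its determinant $p(\xi):=\det(\opA[\xi]^{*}\opA[\xi])$ is then a homogeneous polynomial of some fixed degree $2m$ which does not vanish on $\setC^n\setminus\{0\}$. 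Writing $B(\xi)$ for the adjugate of $\opA[\xi]^{*}\opA[\xi]$, we get the Cramer identity $B(\xi)\opA[\xi]^{*}\opA[\xi]=p(\xi)\,\mathrm{Id}_{\setR^N}$ as polynomials in $\xi$, which passes to the corresponding constant-coefficient differential operators: $\mathbb{B}(D)\,\opA^{*}\opA = p(D)$ up to the usual factors of $i$. Hence every $u\in N(\opA)$ satisfies $p(D)u=0$ in $\mathcal{D}'(\Rn;\RN)$, where $p(D)$ is a (scalar, acting componentwise) homogeneous constant-coefficient operator of order $2m$ whose symbol vanishes only at the origin.

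It then remains to invoke the standard regularity/structure theory for such operators: if $p$ is a homogeneous polynomial with $p(\xi)\neq 0$ for $\xi\in\setR^n\setminus\{0\}$ — equivalently $p(D)$ is elliptic in the classical sense — then any tempered (in particular, any distribution that is a polynomial-growth function, and here $N(\opA)$-elements are smooth by hypoellipticity) solution of $p(D)u=0$ on all of $\Rn$ is a polynomial. In fact the sharper statement suffices: taking Fourier transforms, $p(\xi)\widehat{u}(\xi)=0$ forces $\widehat{u}$ to be supported at the origin, so $\widehat{u}$ is a finite linear combination of derivatives of $\delta_0$ and $u$ is a polynomial; a degree bound $l=l(\opA)$ comes out of the order of $p(D)$ together with an a~priori control (e.g. from interior elliptic estimates on expanding balls, or directly from the Liouville-type theorem for elliptic systems). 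I would cite the classical Liouville theorem for elliptic operators with constant coefficients (e.g. in the form found in standard PDE references, or in the form already used in this circle of ideas) rather than reprove it. The one genuine subtlety — the part I expect to be the main obstacle — is that $N(\opA)$ a~priori consists only of \emph{distributions}, so before speaking of polynomial growth one must first record that $\setC$-ellipticity implies $\opA$ (hence $p(D)$) is hypoelliptic, so that every $u\in N(\opA)$ is automatically $C^\infty$; combined with the fact that it solves an elliptic equation \emph{on all of} $\Rn$, the Fourier-support argument above then closes the loop. Once $N(\opA)\subset\mathscr{P}_l$ is established, the degree $l$ is fixed by $\opA$ and \ref{itm:opA-equiv3} follows, completing the cycle.
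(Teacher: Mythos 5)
Your implications \ref{itm:opA-equiv1}$\Rightarrow$\ref{itm:opA-equiv2} (cite Lemma~\ref{lem:FDNchar}) and \ref{itm:opA-equiv3}$\Rightarrow$\ref{itm:opA-equiv1} (finite dimensionality of $\mathscr{P}_l$) match the paper. The problem is the hard arrow \ref{itm:opA-equiv2}$\Rightarrow$\ref{itm:opA-equiv3}, where there are several genuine gaps.

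First, the key object $p(\xi):=\det(\opA[\xi]^{*}\opA[\xi])$ cannot both be a polynomial in $\xi$ and nonvanishing on $\setC^n\setminus\{0\}$. If $\opA[\xi]^{*}$ denotes the conjugate transpose (the only choice for which injectivity of $\opA[\xi]$ forces invertibility of $\opA[\xi]^{*}\opA[\xi]$ over $\setC$), then $p(\xi)$ involves $\bar\xi$ and is not a polynomial in $\xi$, so $p(D)$ is meaningless. If instead you use the plain transpose, $p(\xi):=\det(\opA[\xi]^{T}\opA[\xi])$ is a polynomial, but it must vanish somewhere on $\setC^n\setminus\{0\}$ whenever $n\geq 2$: a single homogeneous polynomial of positive degree always has a nontrivial complex zero (its projective variety has dimension $n-2\geq 0$). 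Already for $\opA=\nabla$ with $N=1$, $n=2$ one gets $p(\xi)=\xi_1^2+\xi_2^2$, which vanishes at $(1,i)$. So the claim that $\setC$-ellipticity makes $p$ $\setC$-nonvanishing is simply false, and as a result your construction only ever uses $\setR$-ellipticity of $\opA$ (positivity of $p$ on $\setR^n\setminus\{0\}$).

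This matters, because the conclusion you want to draw from $p(D)u=0$ with $p$ real-elliptic is false for merely $\setR$-elliptic $\opA$. Take $\opA=\mathcal{E}^D$ in $n=2$: it is $\setR$-elliptic, the cofactor construction gives a real-elliptic scalar $p(D)$ of order~$4$, and yet $N(\mathcal{E}^D)$ contains the entire function $e^z$ (under $\setR^2\cong\setC$), which is not a polynomial. Two things go wrong in your Liouville step. One: the Fourier-support argument requires $u$ to be a tempered distribution, but $N(\opA)$ is defined in $\mathcal{D}'(\Rn)$ and contains non-tempered elements (again $e^z$); hypoellipticity gives smoothness but not temperedness. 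Two: even restricting to polynomial solutions, real-ellipticity of a scalar operator of fixed order gives no uniform degree bound --- the Laplacian is elliptic of order~$2$ and annihilates harmonic polynomials of every degree. So "$p(D)$ elliptic'' cannot by itself produce the $l$ you need in \ref{itm:opA-equiv3}.

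The missing input is precisely where $\setC$-ellipticity must enter, and it does not enter through a determinant. What is actually needed is a statement of Nullstellensatz type for the ideal generated by the $N\times N$ minors of $\opA[\xi]$: $\setC$-ellipticity says this ideal has no common complex zero away from the origin, and that is what forces some power of each $\xi_i$ into the ideal, hence $\partial_i^{m}u=0$ and a degree bound. The paper avoids having to run this argument from scratch by invoking the representation formula of~\cite{Kal94} (Theorem~4), which under $\setC$-ellipticity gives the pointwise estimate
\begin{align*}
  \abs{u(x) - (\mathcal{P}^l_B u)(x)} \leq c \int_B \frac{\abs{(\opA u)(y)}}{\abs{x-y}^{n-1}}\,dy,
\end{align*}
so that $\opA u=0$ on $B$ forces $u=\mathcal{P}^l_B u\in\mathscr{P}_l$, with the degree $l$ fixed by the theorem; the distributional case is then handled by mollification. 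Your cofactor route does not recover this, and cannot be repaired without bringing in the $\setC$-ellipticity in a substantively different (Nullstellensatz or \cite{Kal94}-type) way.
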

\begin{proof}
  Lemma~\ref{lem:FDNchar} proves
  \ref{itm:opA-equiv1}$\Rightarrow$\ref{itm:opA-equiv2}. Obviously,
  \ref{itm:opA-equiv3}$\Rightarrow$\ref{itm:opA-equiv1}. It remains to
  show \ref{itm:opA-equiv2}$\Rightarrow$\ref{itm:opA-equiv3}.

  Since~$\opA$ is $\setC$-elliptic, it is of type--(C) in the sense of
  \cite{Kal94}, see Remark~\ref{rem:Kal94}.
  Fix~$\omega \in \hold_{c}^{\infty}(B(0,1))$ with
  $\int_{B(0,1)} \omega\,dx=1$. Then for an arbitrary ball $B$, we obtain by dilation and translation a
  function~$\omega_B \in \hold_{c}^{\infty}(B)$ with
  $\int_B \omega_B(y)\,dy = 1$.  For every~$l \in \setN_0$ let
  $\mathcal{P}_{B}^l$ denote the \emph{averaged Taylor polynomial} with
  respect to~$B$ of order~$l$ (see~\cite{dup_scott78}), i.e.
  \begin{align*}
    \mathcal{P}^l_B u(x) &:= \int_B \sum_{\abs{\beta} \leq l} \partial^\beta_y
                         \bigg( \frac{(y-x)^\beta}{\beta!} \omega_B(y)
                         \bigg) u(y)\,dy.
  \end{align*}
  The formula is obtained by multiplying Taylor's polynomial of
  order~$l$ by the weight~$\omega_B$ and integrating by parts. Note that
  $\mathcal{P}_{B}^l u \in \mathscr{P}_l$.

  It follows from the representation formula of \cite{Kal94}, Theorem
  4], that for all~$x\in B$
  \begin{align}
    \label{eq:Plu-riesz}
    \abs{u(x) - (\mathcal{P}^l_B u)(x)} &\leq c\, \int_B \frac{\abs{(\opA
                                    u)(y)}}{\abs{x-y}^{n-1}}\,dy,
  \end{align}
  for some $l \in \setN_0$ (which is fixed from now on) and
  all~$u\in C^\infty(B)$. We do not know the exact value of~$l$, but
  at least~$l$ is so large that $N(\opA) \subset \mathscr{P}_l$ (there is,
  however, an upper bound for~$l$ in terms of~$n$ and~$N$.)

  Now, let $v \in N(\opA)$, i.e. $v\in\mathcal{D}'(\Rn;\RN)$ with
  $\opA v = 0$ in the distributional sense. Let $\phi_\epsilon$ denote
  a standard mollifier, i.e., $\varphi_{\epsilon}(x):=\epsilon^{-n}\varphi(x/\varepsilon)$ with a radially symmetric function $\varphi\in\hold_{c}^{\infty}(\ball;[0,1])$ with $\int_{\ball}\varphi\dif x =1$. Then $v * \phi_\epsilon \in C^\infty(\Rn)$ and
  $\opA (v * \phi_\epsilon) = (\opA v) * \phi_\epsilon =0$.  Hence,
  it follows from~\eqref{eq:Plu-riesz} that $v * \phi_\epsilon \in
  \mathscr{P}_l(\Rn)$. This implies~$v \in \mathscr{P}_l(\Rn)$ as
  desired. The proof is complete.
\end{proof}
\begin{remark}
  \label{rem:VS}
  Let us compare our conditions with the ones of Van
  Schaftingen~\cite{Van13}, building on the fundamental work of
  Bourgain \& Brezis \cite{BoBr1,BoBr2}. According to~\cite{Van13} the operator~$\opA$ is
  \emph{cancelling}\footnote{The definition of \emph{cancelling}
    in~\cite{Van13} is given in terms of the annihilating
    operator~$\mathbb{L}$ from the exact sequence
    in~\eqref{eq:complex}. However, it translates in our setting
    to~\eqref{eq:cancelling}.}  if
  \begin{align}
    \label{eq:cancelling}
    \bigcap_{\xi \neq 0} \opA[\xi](\RN) &= \set{0}.
  \end{align}
  It has been shown in~\cite[Theorem~1.4]{Van13} that whenever $\opA$
  is $\setR$-elliptic and cancelling, then we have the Sobolev--type
  inequality
  \begin{align}\label{eq:VSineq2}
    \|u\|_{\lebe^{\frac{n}{n-1}}(\R^{n};\RN)}\leq C\|\A u\|_{\lebe^{1}(\R^{n};\RK)}
  \end{align}
  for all $u\in\hold_{c}^{\infty}(\R^{n};\R^{N})$. Moreover, the
  $\setR$-ellipticity and cancellation property of~$\opA$ is necessary
  for such inequality.

  For our result on traces we need $\setC$-ellipticity of~$\opA$. So
  the natural question arises how $\setC$-ellipticity  compares to the
  canceling property. It will been shown in~\cite{GmeRai17}
  that $\setC$-ellipticity implies the canceling
  property but not vice-versa. Indeed, the operator
  \begin{align*}
    \opA(u) 
    &:= 
      \begin{pmatrix}
        \frac 12 \partial_1 u_1 - \frac 12 \partial_2 u_2 & 
        \frac 12 \partial_1 u_2 + \frac 12 \partial_2 u_1 &
        \partial_3 u_1
        \\[1mm]
        \frac 12 \partial_1 u_2 + \frac 12 \partial_2 u_1 &
        \frac 12 \partial_1 u_1 - \frac 12 \partial_2 u_2 & 
        \partial_3 u_2
      \end{pmatrix}
  \end{align*}
  is $\setR$-elliptic and cancelling but it is not $\setC$-elliptic,
  since it fails the finite dimensional nullspace property (recall Thm. \ref{thm:opA-equiv}).

\end{remark}

\subsection{Smooth approximations in the interior}
\label{sec:smooth-appr}

In this section we show that functions from~$\WA(\Omega)$ and
$\BVA(\Omega)$ can be approximated in a certain sense by functions
from~$\sobo^{\A,1}(\Omega)\cap C^\infty(\Omega;\R^{N})$. The proof is in the spirit of \cite[Chpt.~5.2]{EvGa} and is included for the reader's convenience.

\begin{theorem}[Smooth Approximation]\label{thm:smoothapprox}
  Let $\Omega\subset\R^{n}$ be open. Then the following hold:
  \begin{enumerate}
  \item\label{item:smoothapprox1} The space
    $(\hold^{\infty}\cap \sobo^{\A,1})(\Omega)$ is dense in
    $\sobo^{\A,1}(\Omega)$ with respect to the norm topology.
  \item\label{item:smoothapprox2} The space
    $(\hold^{\infty}\cap \BVA)(\Omega)$ is dense in $\BVA(\Omega)$
    with respect to the area-strict topology.
  \end{enumerate}
\end{theorem}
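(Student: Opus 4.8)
The plan is to follow the classical mollification scheme from \cite[Chpt.~5.2]{EvGa}, adapted to the operator $\A$ via the product rule \eqref{eq:productrule}. For part \ref{item:smoothapprox1}, the strategy is standard: fix a countable exhaustion of $\Omega$ by open sets $\Omega_1 \Subset \Omega_2 \Subset \dots$ with $\bigcup_k \Omega_k = \Omega$, set $V_0 := \Omega_2$ and $V_k := \Omega_{k+2}\setminus\overline{\Omega_k}$ for $k\geq 1$, and take a smooth partition of unity $(\zeta_k)$ subordinate to $(V_k)$. Given $u \in \WA(\Omega)$ and $\delta > 0$, one mollifies each piece $\zeta_k u$ at a scale $\varepsilon_k$ small enough that $\spt(\varphi_{\varepsilon_k}*(\zeta_k u)) \Subset V_k$ and $\|\varphi_{\varepsilon_k}*(\zeta_k u) - \zeta_k u\|_{L^1} + \|\varphi_{\varepsilon_k}*\A(\zeta_k u) - \A(\zeta_k u)\|_{L^1} \leq 2^{-k}\delta$; summing gives $u_\delta := \sum_k \varphi_{\varepsilon_k}*(\zeta_k u) \in C^\infty(\Omega;\R^N)$ (the sum is locally finite), and $\|u_\delta - u\|_{\WA(\Omega)} \leq \delta$ follows from $\sum_k \zeta_k = 1$, $\A u = \sum_k \A(\zeta_k u)$ (which holds since $\sum_k u\otimes_\A \nabla\zeta_k = u \otimes_\A \nabla(\sum_k \zeta_k) = 0$), and the triangle inequality. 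The key computational point is that $\A(\zeta_k u) = \zeta_k \A u + u \otimes_\A \nabla\zeta_k \in L^1$, so mollification commutes appropriately and the error terms are controlled.

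For part \ref{item:smoothapprox2}, the same construction produces, for $u \in \BVA(\Omega)$, functions $u_\delta \in (C^\infty\cap\BVA)(\Omega)$ with $u_\delta \to u$ in $L^1(\Omega;\R^N)$ and $\A u_\delta \weakastto \A u$ in $\mathcal{M}(\Omega;\RK)$, together with the upper bound $|\A u|(\Omega) \leq \liminf_\delta |\A u_\delta|(\Omega)$ from weak*-lower semicontinuity. The improvement from weak* to area-strict convergence is the substantive part: one must upgrade to $\limsup_\delta \int_\Omega \sqrt{1+|\A u_\delta / \mathscr{L}^n|^2}\,d\mathscr{L}^n + |\A^s u_\delta|(\Omega) \leq \int_\Omega \sqrt{1+|\A u/\mathscr{L}^n|^2}\,d\mathscr{L}^n + |\A^s u|(\Omega)$. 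The clean way to do this is to invoke Reshetnyak's continuity theorem: the functional $\mu \mapsto \int_{\overline\Omega} \sqrt{1+|d\mu/d|\mu||^2}\,d|\mu|$ (the perimeter-type integrand applied to $\mu = \A u$ viewed as a measure on a slightly larger set, or more precisely the functional $\mu\mapsto \|(\mathscr{L}^n,\mu)\|(\Omega)$ where $(\mathscr{L}^n,\mu)$ denotes the $\R^{K+1}$-valued measure) is strictly continuous along strictly convergent sequences of measures. So it suffices to first show strict convergence $\A u_\delta \strictto \A u$, i.e. that $|\A u_\delta|(\Omega) \to |\A u|(\Omega)$, and then the area-strict convergence follows automatically from Reshetnyak.

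The remaining point for strict convergence is the matching lower-semicontinuity inequality's reverse: one needs $\limsup_\delta |\A u_\delta|(\Omega) \leq |\A u|(\Omega)$. Here the error introduced by the partition of unity, namely the terms $u\otimes_\A \nabla\zeta_k$, must be shown to be negligible. Using Lemma~\ref{lem:upperlower} (or just continuity of $\otimes_\A$), one estimates $|\varphi_{\varepsilon_k}*(u\otimes_\A\nabla\zeta_k)|$ in $L^1$ by a quantity controlled by $\|u\|_{L^1(V_k)}$, and by choosing the $\varepsilon_k$ small and exploiting that the $V_k$ have bounded overlap, one gets $|\A u_\delta|(\Omega) \leq |\A u|(\Omega \cap (\text{overlap region})) + C\delta + (\text{mollification error}) \leq |\A u|(\Omega) + C'\delta$; the standard refinement (choosing the exhaustion so that $|\A u|(\Omega_{k+2}\setminus\overline{\Omega_{k-1}})$ is small for large $k$, since $\A u$ is a finite measure) kills the overlap contribution. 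I expect \textbf{this last estimate} — controlling the $\A u_\delta$ total variation from above, i.e. making the partition-of-unity commutator terms $u\otimes_\A\nabla\zeta_k$ harmless — to be the main obstacle, exactly as in the classical $\bv$ case; everything else is bookkeeping once the product rule \eqref{eq:productrule} and Reshetnyak's theorem are in hand.
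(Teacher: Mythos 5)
Your part~\ref{item:smoothapprox1} and the overall decomposition (annular exhaustion, partition of unity $(\zeta_k)$, per--piece mollification at scales $\varepsilon_k$) match the paper's construction. For part~\ref{item:smoothapprox2}, however, there is a genuine gap in the passage from strict to area-strict convergence. You claim that once $\A u_\delta \strictto \A u$ is established, area-strict convergence ``follows automatically from Reshetnyak,'' because the functional $\mu \mapsto \|(\mathscr{L}^n,\mu)\|(\Omega)$ is strictly continuous along strictly convergent sequences. This is false. Reshetnyak's continuity theorem requires the \emph{lifted} measures $(\mathscr{L}^n, \A u_\delta)$ to converge strictly, and strict convergence of the lifted measures \emph{is}, by definition, area-strict convergence of $\A u_\delta$; the argument is therefore circular. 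Strict convergence of $\A u_\delta$ alone does not give it: take $\Omega=(0,1)$, $\mu_j := 2\chi_{E_j}\mathscr{L}^1$ with $|E_j|=\frac12$ and $E_j$ equidistributing, so $\mu_j\strictto\mathscr{L}^1=:\mu$; yet
\begin{align*}
\|(\mathscr{L}^1,\mu_j)\|(\Omega) = \tfrac12\sqrt{5}+\tfrac12 \;\not\to\; \sqrt{2} = \|(\mathscr{L}^1,\mu)\|(\Omega).
\end{align*}
The failure is exactly the strict convexity of $t\mapsto\sqrt{1+t^2}$: the area functional is lower semicontinuous along strictly convergent sequences but not continuous. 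The correct upgrade exploits the extra structure of mollification: Jensen's inequality shows that mollification (a probability average) \emph{decreases} the area integrand of the absolutely continuous part, yielding $\limsup_\delta \int_\Omega\sqrt{1+|\A u_\delta|^2}\,dx + |\A^s u_\delta|(\Omega) \leq \int_\Omega\sqrt{1+|\A u/\mathscr{L}^n|^2}\,dx + |\A^s u|(\Omega)$, which combined with lower semicontinuity gives convergence. This is what the paper invokes via the reference to Bildhauer's Lemma~B.2, and it cannot be replaced by a general strict-to-area-strict implication.

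A secondary, smaller issue is the bookkeeping of the commutator terms in the $\limsup$ bound on $|\A u_\delta|(\Omega)$. You propose estimating each $\varphi_{\varepsilon_k}*(u\otimes_\A\nabla\zeta_k)$ individually in $L^1$ by a multiple of $\|u\|_{L^1(V_k)}$ and then summing with bounded overlap --- but these terms do not become small as $\varepsilon_k\to 0$ (they converge to $u\otimes_\A\nabla\zeta_k$, which has a fixed $L^1$-norm), so their sum is a fixed finite quantity, not $O(\delta)$. The mechanism that makes the commutator contribution negligible is precisely the cancellation $\sum_k u\otimes_\A\nabla\zeta_k = 0$ which you correctly invoke in part~\ref{item:smoothapprox1}: one subtracts this vanishing sum, so that what remains is $\sum_k\bigl[\varphi_{\varepsilon_k}*(u\otimes_\A\nabla\zeta_k) - u\otimes_\A\nabla\zeta_k\bigr]$, a sum of mollification \emph{errors}, each of which can be made $\leq 2^{-k}\delta$ by choosing $\varepsilon_k$ small. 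This is the paper's condition~(iii) and its treatment of the term $II_j$; your write-up for part~\ref{item:smoothapprox2} loses this cancellation.
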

\begin{proof}
 Fix $u \in \bvA(\Omega)$. For $k =2,3,\dots$ define
  $\Omega_k := \set{x \in \Omega\,:\, \frac 1{k+1} < d(x,\partial
    \Omega) < \frac 1{k-1}}$.
  Now pick a sequence $(\psi_k)$ such that for each $k\in\setN$,
  $\psi_k\in \hold^{\infty}_c(\Omega_k;[0,1])$ together with
  $\sum_k\psi_k=1$ globally in $\Omega$. Now let
  $\eta_\epsilon \colon\R^{n}\to\R$ be a standard mollifier (even and
  non-negative).

  For $j \in \setN$ and $k \in \setN$ we can find $\epsilon_{j,k}>0$
  such that
  \begin{enumerate}[label={(\roman{*})}]
  \item \label{itm:smooth1}
    $\spt(\eta_{\varepsilon_{j,k}}*(\psi_ku))\subset\Omega_k$,
  \item \label{itm:smooth2}
    $\|\psi_ku -
    \eta_{\varepsilon_{j,k}}*(\psi_ku)\|_{\lebe^{1}(\Omega)}<2^{-k-j}$,
  \item \label{itm:smooth3} 
    $\| u \otimes_\opA \nabla\psi_k - 
    \eta_{\varepsilon_{j,k}}*(u \otimes_\opA \nabla\psi_k)
    \|_{\lebe^{1}(\Omega)}<2^{-k-j}$.
  \item \label{itm:smooth4} If $u \in \WA(\Omega)$, we additionally
    require
    $\|\psi_k \opA u - \eta_{\varepsilon_{j,k}}*(\psi_k \opA
    u)\|_{\lebe^{1}(\Omega)}<2^{-k-j}$.
  \end{enumerate}
  This allows us to define $u_j \in C^\infty(\Omega)$ by
  $u_j:=\sum_{k\in\mathbb{N}}\eta_{\varepsilon_{j,k}}*(\psi_ku)$,
  which is well defined in~$L^1_{\loc}(\Omega)$, since the sum is locally
  finite. Then in $L^1_{\loc}(\Omega)$
  \begin{align*}
    u - u_j &= \sum_k \big(\psi_k u - \eta_{\epsilon_{j,k}} * (\psi_k u)\big).
  \end{align*}
  This and~\ref{itm:smooth2} implies $\norm{u-u_j}_{L^1(\Rn)} \lesssim 2^{-j}$.
  If $u \in \WA(\Omega)$, then~\ref{itm:smooth3} and
  ~\ref{itm:smooth4} imply $\norm{\opA u-\opA u_j}_{L^1(\Rn)}
  \lesssim 2^{-j}$. This proves~\ref{item:smoothapprox1}.

  It remains to prove $u_j \areato u$ for $j \to \infty$ for
  $u \in \BVA(\Omega)$. In fact, the proof is like in the
  standard~$\setBV$ case. For simplicity of notation we just show
  $u_j \strictto u$ for $j \to \infty$. The necessary changes to pass
  from strict convergence to area-strict convergence are just like
  in~\cite[Lemma B.2]{Bil03}.

  Since $u_j \to u$ in $L^1(\Rn)$ it follows by
  by the lower semicontinuity of the total $\opA$--variation that
  $|\opA u|(\Omega)\leq \liminf_{j \to \infty} |\opA u_j|(\Omega)$. It
  remains to prove
  $\limsup_{j \to \infty} \abs{\opA u_j}(\Omega) \leq \abs{\opA
    u}(\Omega)$.
  For this we invoke the dual characterisation
  \eqref{eq:deftotalAvariation} of the total $\A$--variation. Let
  $\varphi\in \hold^{1}_c(\Omega; \RK)$ with $|\varphi|\leq 1$ be arbitrary. We
  compute
  \begin{align*}
    \int_{\Omega}\langle u_j,\opA^*\varphi\rangle\dif x 
    & = \sum_{k}\int_{\Omega}\langle
      \eta_{\varepsilon_{j,k}}*(\psi_ku),\opA^*\varphi\rangle\dif x =
      \sum_{k}\int_{\Omega}\langle
      \psi_ku,\opA^*(\eta_{\varepsilon_{j,k}}*\varphi)\dif x 
    \\
    & = \sum_{k}\int_{\Omega}\langle
      u,\opA^*(\psi_k(\eta_{\varepsilon_{j,k}}*\varphi))\rangle\dif x
      -\sum_{k}\int_{\Omega}\langle
      u,(\eta_{\varepsilon_{j,k}}*\varphi) \otimes_{\opA^*} \nabla\psi_k\rangle\dif
      x
    \\
    &=: I_j +II_j.
  \end{align*}
  The sums are well defined, since $\phi \in \hold^{1}_c(\Omega)$ and
  $u_j = \sum_k \eta_{\epsilon_{j,k}} * (\psi_k u)$ in
  $L^1_{\loc}(\Omega)$. Now
  \begin{align*}
    \Bigabs{\sum_k \psi_k (\eta_{\epsilon_{j,k}} * \phi)}
    &\leq
      \sum_k \psi_k \abs{\eta_{\epsilon_{j,k}} * \phi} \leq
      \sum_k \psi_k \norm{\phi}_\infty = \norm{\phi}_\infty \leq 1.
  \end{align*}
  Therefore,
  \begin{align*}
    I_j &= \int_{\Omega}\Bigskp{
          u}{\opA^*\Big( \sum_k\psi_k(\eta_{\varepsilon_{j,k}}*\varphi) \Big)}\dif x
          \leq \abs{\opA u}(\Omega).
  \end{align*}
  Using $\sum_k\nabla\psi_k=0$ and $\phi \in \hold^{1}_c(\Omega)$, we
  now rewrite $II_j$ as
  \begin{align*}
    II_j
    &= \sum_{k}\int_{\Omega}\langle
      u,(\eta_{\varepsilon_{j,k}}*\varphi) \otimes_{\opA^*} \nabla\psi_k\rangle\dif
      x - \sum_{k}\int_{\Omega}\langle
      u,\varphi \otimes_{\opA^*} \nabla\psi_k\rangle\dif
      x
    \\
    & = \sum_{k} \int_\Omega \langle
      \eta_{\varepsilon_{j,k}}*(u \otimes_{\A} \nabla\psi_k)
      -(u \otimes_{\A} \nabla\psi_k), \varphi\rangle\dif
      x.
  \end{align*}
  Invoking~\ref{itm:smooth3} and $\norm{\phi}_\infty \leq 1$ we obtain
  $|II_j|\lesssim 2^{-j}$.  Hence, collecting estimates we obtain
  as desired
  $\limsup_{j \to \infty} \abs{\opA u_j}(\Omega) \leq \limsup_{j \to
    \infty} (\abs{\opA u}(\Omega)+ c\, 2^{-j}) =\abs{\opA u}(\Omega)$.
\end{proof}

\section{Projections and \Poincare{} Inequalities}
\label{sec:projpoinc}

In this section we derive several versions of \Poincare{}'s
inequality. We assume throughout the section that~$\opA$ is
$\setC$-elliptic (or, equivalently: $\opA$ has a finite dimensional nullspace, see Thm. \ref{thm:opA-equiv}).

\subsection{Projection Operator}
\label{sec:projection-operator}

We begin with some projection estimates.

For every ball~$B \subset \Rn$ and~$u \in L^2(B;\RN)$ we define $\Pi_B u$
as the $L^2$-projection of~$u$ onto $N(\opA)$. Hence,
\begin{align*}
  \int_B \abs{\Pi_B u}^2\,dx &\leq 
                               \int_B \abs{u}^2\,dx.
\end{align*}
Since $N(\opA)$ is finite dimensional, there exists a constant~$c > 0$
with
\begin{align}
  \label{eq:PiB-Linfty_L1}
  \norm{\Pi_B u}_{L^\infty(B)} &\le 
  c\, \dashint_B \abs{\Pi_B u}\,dx.
\end{align}
Indeed, this is clear for the unit ball and extends to general balls by dilation
and translation. It follows from this as usual that
\begin{align}
  \label{eq:PiB-L1}
  \dashint_B \abs{\Pi_B u}\,dx   &\leq c\,
                                   \dashint_B
                                   \abs{u}\,dx.
\end{align}
Thus, $\Pi_B$ can be extended to~$L^1(B;\RN)$ such that~\eqref{eq:PiB-L1}
remains valid.

\begin{lemma}
  \label{lem:inf-vs-mean}
Then there
  exists~$c\geq 1$ with
  \begin{align*}
    \inf_{q \in N(\opA)} \norm{u-q}_{L^1(B)} 
    &\le \norm{u - \Pi_B u}_{L^1(B)} \le c\,   \inf_{q \in N(\opA)} \norm{u-q}_{L^1(B)}.
  \end{align*}
\end{lemma}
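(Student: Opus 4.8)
The plan is to prove the two inequalities separately, of which the left one is essentially trivial and the right one is where the work lies. For the left inequality, note that $\Pi_B u \in N(\opA)$, so $\|u - \Pi_B u\|_{L^1(B)}$ is a competitor in the infimum over $q \in N(\opA)$, and hence $\inf_{q \in N(\opA)} \|u - q\|_{L^1(B)} \le \|u - \Pi_B u\|_{L^1(B)}$ immediately.

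For the right inequality, the idea is to exploit that $\Pi_B$ is a projection \emph{onto} $N(\opA)$, so it fixes elements of $N(\opA)$: if $q \in N(\opA)$ then $\Pi_B q = q$. Given an arbitrary $q \in N(\opA)$, I would write
\begin{align*}
  \|u - \Pi_B u\|_{L^1(B)}
  &\le \|u - q\|_{L^1(B)} + \|q - \Pi_B u\|_{L^1(B)}
   = \|u - q\|_{L^1(B)} + \|\Pi_B(q - u)\|_{L^1(B)}.
\end{align*}
Now I apply the boundedness estimate \eqref{eq:PiB-L1} for $\Pi_B$ on $L^1(B)$: there is a constant $c_0$ (independent of $B$, by the dilation/translation argument already given before the lemma) with $\dashint_B |\Pi_B v|\,dx \le c_0 \dashint_B |v|\,dx$, i.e.\ $\|\Pi_B v\|_{L^1(B)} \le c_0 \|v\|_{L^1(B)}$. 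Applying this with $v = q - u$ gives $\|\Pi_B(q-u)\|_{L^1(B)} \le c_0 \|u - q\|_{L^1(B)}$, hence $\|u - \Pi_B u\|_{L^1(B)} \le (1 + c_0)\|u - q\|_{L^1(B)}$. Taking the infimum over $q \in N(\opA)$ yields the claim with $c = 1 + c_0$.

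There is essentially no serious obstacle here; the only point requiring minor care is to make sure the constant $c_0$ from \eqref{eq:PiB-L1} is genuinely scale- and translation-invariant so that the resulting $c$ does not depend on the ball $B$ — but this is exactly what the remarks preceding the lemma (the reduction to the unit ball via dilation and translation, using that $N(\opA)$ is finite-dimensional so that all norms on it are equivalent) already establish. One should also observe that the estimate makes sense for $u \in L^1(B;\RN)$, which is fine since $\Pi_B$ was extended to $L^1(B;\RN)$ with \eqref{eq:PiB-L1} still valid. The finite dimensionality of $N(\opA)$ — equivalently $\setC$-ellipticity by Theorem~\ref{thm:opA-equiv} — enters only through the well-definedness and $L^1$-boundedness of the projection $\Pi_B$, which have already been set up.
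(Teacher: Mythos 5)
Your proof is correct and follows essentially the same argument as the paper: the left inequality is immediate since $\Pi_B u \in N(\opA)$, and the right inequality uses the triangle inequality together with $\Pi_B q = q$ for $q \in N(\opA)$ and the scale-invariant $L^1$-bound~\eqref{eq:PiB-L1} on $\Pi_B$. The paper's proof writes $\norm{u-\Pi_B u}_{L^1(B)} \le \norm{u-q}_{L^1(B)} + \norm{\Pi_B(u-q)}_{L^1(B)} \le c\,\norm{u-q}_{L^1(B)}$ and takes the infimum, which is exactly your reasoning.
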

\begin{proof}
  The first estimate is obvious. Now, for all~$q \in N(\opA)$ we have
  $\Pi_B q =q$. This and~\eqref{eq:PiB-L1} imply
  \begin{align*}
    \norm{u - \Pi_B
    u}_{L^1(B)} 
    &\leq
      \norm{u - q}_{L^1(B)} +
      \norm{\Pi_B (u-q)}_{L^1(B)} \leq c\,
      \norm{u - q}_{L^1(B)}.
  \end{align*}
  Taking the infimum over~$q \in N(\opA)$ proves the lemma.
\end{proof}

\subsection{\Poincare{} Inequalities}
\label{sec:poincare}

In this subsection we derive \Poincare{}--type
inequalities for $\WA$ and $\BVA$. Recall that for a ball $B$ we denote by~$\ell(B)$ its diameter.

\begin{theorem}
  \label{thm:poincare}
  There exists a constant~$c>0$ such that for all balls~$B$ and all $u
  \in \BVA(B)$ it holds
  \begin{align*}
    \inf_{q \in N(\opA)} \norm{u-q}_{L^1(B)}  \le \norm{u-\Pi_B
    u}_{L^1(B)} &\le c\, \ell(B)\,\abs{\opA u}(B),
  \end{align*}
  where $\Pi_B$ is the $L^2$-orthogonal projection onto~$N(\opA)$ from
  Subsection~\ref{sec:projection-operator}.
\end{theorem}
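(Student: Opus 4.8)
The plan is to argue by contradiction and compactness, exploiting the finite dimensionality of $N(\opA)$ together with the area-strict smooth approximation from Theorem~\ref{thm:smoothapprox}. First I would reduce to the unit ball $B = \ball(0,1)$: the estimate is invariant under translation, and under the rescaling $u \mapsto u(\ell(B)\,\cdot\,)$ both sides scale with the same power of $\ell(B)$ (the left side picks up $\ell(B)^{n}$ from the measure, the right side $\ell(B)^{n}\cdot\ell(B)^{-1}\cdot\ell(B) = \ell(B)^{n}$ after accounting for the measure $|\opA u|$ scaling like $\ell(B)^{n-1}$ and the explicit factor $\ell(B)$), so it suffices to prove
\begin{align*}
  \norm{u - \Pi_B u}_{L^1(B)} \le c\, \abs{\opA u}(B) \qquad \text{for all } u \in \BVA(B),\ B = \ball(0,1).
\end{align*}
The first inequality of the theorem is the trivial one (it is $\inf_{q} \norm{u-q}_{L^1(B)} \le \norm{u - \Pi_B u}_{L^1(B)}$, since $\Pi_B u \in N(\opA)$), so only the second needs work.

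Next I would set up the contradiction: suppose no such $c$ exists, so there is a sequence $(u_k) \subset \BVA(B)$ with $\norm{u_k - \Pi_B u_k}_{L^1(B)} = 1$ but $\abs{\opA u_k}(B) \to 0$. Replacing $u_k$ by $u_k - \Pi_B u_k$ and using $\Pi_B(\Pi_B u_k) = \Pi_B u_k$, hence $\Pi_B(u_k - \Pi_B u_k) = 0$, I may assume $\Pi_B u_k = 0$, $\norm{u_k}_{L^1(B)} = 1$, and $\abs{\opA u_k}(B) \to 0$. Thus $(u_k)$ is bounded in $\BVA(B)$, so by the weak*--compactness principle for $\BVA$ (stated in Section~\ref{sec:funct-analyt-setup}, using that $B$ is a Lipschitz domain) a subsequence satisfies $u_k \wstar u$ for some $u \in \BVA(B)$; in particular $u_k \to u$ strongly in $L^1(B)$, so $\norm{u}_{L^1(B)} = 1$. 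By weak*--lower semicontinuity of the total $\opA$--variation, $\abs{\opA u}(B) \le \liminf_k \abs{\opA u_k}(B) = 0$, so $\opA u = 0$ on $B$, i.e.\ $u \in N(\opA)$ (restricted to $B$; by Theorem~\ref{thm:opA-equiv} this means $u$ is the restriction of a polynomial). Finally, since $\Pi_B$ is bounded on $L^1(B)$ by~\eqref{eq:PiB-L1} and $L^1$--continuous, $\Pi_B u = \lim_k \Pi_B u_k = 0$; but $\Pi_B u = u$ because $u \in N(\opA)$, forcing $u = 0$, which contradicts $\norm{u}_{L^1(B)} = 1$.

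The main subtlety — and the step I would be most careful about — is the claim that $\opA u = 0$ on $B$ (with $B$ merely a ball, not all of $\Rn$) implies $\Pi_B u = u$, and more basically that the weak* limit genuinely has vanishing $\opA$--variation. The former is immediate from the definition of $\Pi_B$ as the $L^2$--projection onto $N(\opA)|_B$ once one knows $N(\opA)|_B$ is finite dimensional and that distributional solutions of $\opA u = 0$ on the ball still lie in this space — this is exactly the content of Theorem~\ref{thm:opA-equiv}\ref{itm:opA-equiv3} combined with the representation formula~\eqref{eq:Plu-riesz}, applied on $B$: any $u \in \BVA(B)$ with $\opA u = 0$ coincides a.e.\ with its averaged Taylor polynomial $\mathcal{P}_B^l u$, hence is a polynomial. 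The latter is just the stated lower semicontinuity. One alternative, avoiding the compactness argument, would be to prove the estimate first for $u \in \hold^{\infty}(B) \cap \BVA(B)$ directly from~\eqref{eq:Plu-riesz} (integrate the Riesz-potential bound over $x \in B$ and use that convolution with $\abs{\,\cdot\,}^{1-n}$ maps $L^1(B) \to L^1(B)$, together with Lemma~\ref{lem:inf-vs-mean} to replace $\mathcal{P}_B^l u$ by $\Pi_B u$ at the cost of a constant), and then pass to general $u \in \BVA(B)$ by the area-strict density of Theorem~\ref{thm:smoothapprox}\ref{item:smoothapprox2}, noting that $d_s(u_j,u) \to 0$ gives both $u_j \to u$ in $L^1(B)$ and $\abs{\opA u_j}(B) \to \abs{\opA u}(B)$, hence convergence of both sides. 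I would likely present this second, more constructive route, since it also makes the constant's origin transparent.
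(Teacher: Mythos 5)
Your first route — the contradiction/compactness argument — is sound and is genuinely different from what the paper does. The paper proceeds constructively: starting from the averaged Taylor polynomial bound \eqref{eq:Plu-riesz}, it produces a pointwise Riesz-potential estimate $\abs{u(x)-\Pi_B(\mathcal{P}^l_B u)(x)}\lesssim\int_B\abs{\opA u(y)}\,\abs{x-y}^{1-n}\,dy$ and integrates. Your compactness argument avoids all that quantitative work, but it leans on the weak*-compactness principle for $\BVA$, which (despite the paper's casual attribution to Banach--Alaoglu) actually amounts to a compact embedding $\BVA(B)\hookrightarrow L^1(B)$; that embedding is true for $\setR$-elliptic operators via the Green's-function representation, so there is no circularity, but it is a less elementary ingredient than the paper's. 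The paper's constructive route also pays for itself immediately afterwards: the explicit bound \eqref{eq:pre-poincare} (with $\mathcal{P}^l_B u$ built from a weight supported in a prescribed subball) is reused essentially verbatim in Theorem~\ref{thm:poincare2}, which your compactness proof would not give without a separate argument.

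However, the second route — the one you say you would actually present — has a genuine gap. After integrating \eqref{eq:Plu-riesz} you obtain $\norm{u-\mathcal{P}^l_B u}_{L^1(B)}\lesssim\ell(B)\,\norm{\opA u}_{L^1(B)}$, and you then invoke Lemma~\ref{lem:inf-vs-mean} "to replace $\mathcal{P}_B^l u$ by $\Pi_B u$." But Lemma~\ref{lem:inf-vs-mean} only controls $\norm{u-\Pi_B u}_{L^1(B)}$ by $c\,\inf_{q\in N(\opA)}\norm{u-q}_{L^1(B)}$, and $\mathcal{P}^l_B u$ is \emph{not} in $N(\opA)$ in general — it is an arbitrary polynomial of degree $\leq l$, with $\opA(\mathcal{P}^l_B u)=\mathcal{P}^{l-1}_B(\opA u)\neq 0$. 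So $\mathcal{P}^l_B u$ is not an admissible competitor in the infimum, and the lemma does not close the argument. The paper resolves exactly this point by inserting $\Pi_B(\mathcal{P}^l_B u)\in N(\opA)$ as the intermediate object and estimating the discrepancy $\norm{\mathcal{P}^l_B u-\Pi_B(\mathcal{P}^l_B u)}_{L^\infty(B)}$ via the finite-dimensional norm equivalence \eqref{eq:Pl-opA}, the commutation identity \eqref{eq:Pl-commutes}, and the $L^1$-stability \eqref{eq:Pl-stab} of the averaged Taylor polynomial. That extra chain of three estimates is the actual content of the constructive proof, and it is missing from your sketch. If you wish to present a constructive version, you need to reproduce that step; otherwise stick with your compactness argument, which is complete.
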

\begin{proof}
  By dilation and translation, it suffices to prove the claim for the
  unit ball $B=B(0,1)$.  
  Moreover, by smooth approximation (see Theorem~\ref{thm:smoothapprox})
  it suffices to consider $u \in C^\infty(B;\RN) \cap \WA(B)$.

  We use the averaged Taylor polynomals as in the proof of
  Theorem~\ref{thm:opA-equiv}. Recall that by~\eqref{eq:Plu-riesz} we
  have the estimate
  \begin{align}
    \label{eq:8}
    \abs{u(x) - (\mathcal{P}^lu)(x)} &\leq c\, \int_B \frac{\abs{(\opA
                                    u)(y)}}{\abs{x-y}^{n-1}}\,dy\qquad\text{for all}\;x\in B.
  \end{align}
  Since $\mathcal{P}^l u$ is not necessarily in the kernel
  of~$\opA$, we wish to replace it by $\Pi_B (\mathcal{P}^l)$. Thus,
  we start with
  \begin{align}
    \label{eq:6}
    \abs{u(x) - \Pi_B (\mathcal{P}^lu)(x)} 
    &\leq 
      \abs{u(x) - (\mathcal{P}^lu)(x)} +
      \abs{(\mathcal{P}^lu)(x) - (\Pi_B (\mathcal{P}^lu))(x)}.
  \end{align}
  Now, for any~$p \in \mathscr{P}_l$ there holds
  \begin{align}
    \label{eq:Pl-opA}
    \norm{p - \Pi_B p}_{L^\infty(B)} &\leq c\, \dashint_B \abs{\opA p}\,dx.
  \end{align}
  Indeed, both sides define a norm on the finite dimensional
  space~$\mathscr{P}_l / N(\opA)$ and vanish on~$N(\opA)$.
  Hence, for all~$x \in B$
  \begin{align}
    \label{eq:5}
    \begin{aligned}
      \abs{(\mathcal{P}^lu)(x) - (\Pi_B (\mathcal{P}^lu))(x)} &\le
      \norm{\mathcal{P}^lu - \Pi_B (\mathcal{P}^lu)}_{L^\infty(B)}
      \\
      &\le c\, \dashint_B \abs{\opA (\mathcal{P}^l u)}\,dx.
    \end{aligned}
  \end{align}
  The definition of the averaged Taylor polynomial implies that
  \begin{align}
    \label{eq:Pl-commutes}
    \opA (\mathcal{P}^l u) = \mathcal{P}^{l-1} (\opA u),
  \end{align}
  where $\mathcal{P}^{-1} u := 0$ if~$l=0$. The $L^1$-stability of the
  averaged Taylor polynomial gives
  \begin{align}
    \label{eq:Pl-stab}
    \norm{\mathcal{P}^{l-1} (\opA  u)}_{L^1(B)} &\le c\,
                                                  \norm{\opA u}_{L^1(B)}.
  \end{align}
  Now, \eqref{eq:Pl-opA} and~\eqref{eq:Pl-stab} yield
  \begin{align*}
    \abs{(\mathcal{P}^lu)(x) - (\Pi_B (\mathcal{P}^lu))(x)} 
    &\le c\, \ell(B)
      \dashint_B
      \abs{\opA u}\,dy
      \leq c\, \int_B \frac{\abs{(\opA
      u)(y)}}{\abs{x-y}^{n-1}}\,dy.
  \end{align*}
  So,~\eqref{eq:8} and~\eqref{eq:6} imply the estimate
  \begin{align}
    \label{eq:u-PiBPl-riesz}
    \abs{u(x) - (\Pi_B \mathcal{P}^lu)(x)} &\leq c\, \int_B \frac{\abs{(\opA
                                             u)(y)}}{\abs{x-y}^{n-1}}\,dy.
  \end{align}
  Now, integration over~$x \in B$ gives
  \begin{align*}
    \int_B \abs{u - \Pi_B(\mathcal{P}^l u)}\,dx 
    &\le c\, \int_B \int_B \frac{\abs{(\opA
      u)(y)}}{\abs{x-y}^{n-1}}\,dy\,dx
      \\
    &\le c\, \int_B \abs{(\opA u)(y)} \int_B \abs{x-y}^{1-n}\,dx \,dy
      \\
    &\le c\, \ell(B) \int_B \abs{\opA u} \,dy.
  \end{align*}
  We have shown
  \begin{align}
    \label{eq:pre-poincare}
    \norm{u  - \Pi_B(\mathcal{P}^l u)}_{L^1(B)} 
    &\le c\, \ell(B)
      \norm{\opA
      u}_{L^1(B)}. 
  \end{align}
  The rest follows by Lemma~\ref{lem:inf-vs-mean}.

\end{proof}
\begin{theorem}
  \label{thm:poincare2}
  Let $B'$ and $B$ are two balls with $B' \subset B$ and
  $\ell(B) \lesssim \ell(B')$. Then for all~$u \in \BVA(B)$ with $u=0$
  on~$B'$, there holds
  \begin{align*}
    \norm{u}_{L^1(B)} &\leq c\, \ell(B) \abs{\opA u}(B).
  \end{align*}
  The constant only depends on the ratio $\ell(B)/\ell(B')$.
\end{theorem}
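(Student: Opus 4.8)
The plan is to combine the \Poincare{} inequality of Theorem~\ref{thm:poincare} with a comparison of $L^1$--norms over $B'$ and $B$ for elements of the finite dimensional nullspace $N(\opA)$. By translating and dilating we may assume $B = \ball(0,1)$; set $\Lambda := \ell(B)/\ell(B')$, so that $\ell(B') \geq 1/\Lambda$, and the general statement will follow once the constant is shown to depend only on $\Lambda$ (together with the fixed data $n$, $N$, $\opA$), the factor $\ell(B)$ being restored at the end by scaling. Note that $u \in \BVA(B) \subset L^1(B)$, so $\Pi_B u$ is defined by the extension of the projection discussed around~\eqref{eq:PiB-L1}.

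First I would apply Theorem~\ref{thm:poincare}, which gives $\norm{u - \Pi_B u}_{L^1(B)} \leq c\,\abs{\opA u}(B)$. Since $u = 0$ on $B'$, on $B'$ we have $\Pi_B u = \Pi_B u - u$, hence $\abs{\Pi_B u} = \abs{u - \Pi_B u}$ there, and therefore $\norm{\Pi_B u}_{L^1(B')} = \norm{u - \Pi_B u}_{L^1(B')} \leq \norm{u - \Pi_B u}_{L^1(B)} \leq c\,\abs{\opA u}(B)$. It thus remains to bound $\norm{\Pi_B u}_{L^1(B)}$ by $\norm{\Pi_B u}_{L^1(B')}$ up to a constant depending only on $\Lambda$, because then the triangle inequality $\norm{u}_{L^1(B)} \leq \norm{u - \Pi_B u}_{L^1(B)} + \norm{\Pi_B u}_{L^1(B)}$ closes the estimate (before undoing the dilation).

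For this comparison I would invoke that $N(\opA)$ is finite dimensional and, by Theorem~\ref{thm:opA-equiv}, satisfies $N(\opA) \subset \mathscr{P}_l$. Consider the compact set of parameters describing balls $\widetilde{B} \subset \ball(0,1)$ with $\ell(\widetilde{B}) \geq 1/\Lambda$ (parametrised by centre and radius). For each such $\widetilde{B}$ the map $q \mapsto \norm{q}_{L^1(\widetilde{B})}$ is a norm on $N(\opA)$, since a polynomial vanishing on a set of positive measure is identically zero; hence it is equivalent to $\norm{\cdot}_{L^1(\ball(0,1))}$ on the finite dimensional space $N(\opA)$. A standard compactness argument over the unit sphere of $N(\opA)$ and the parameter set makes the equivalence constants uniform in $\widetilde{B}$, depending only on $\Lambda$ and $N(\opA)$; applying this with $\widetilde{B} = B'$ yields $\norm{\Pi_B u}_{L^1(B)} \leq c(\Lambda)\,\norm{\Pi_B u}_{L^1(B')}$, and combining with the previous step completes the proof. (Alternatively, one may run this through the $L^\infty$--$L^1$ bound \eqref{eq:PiB-Linfty_L1}, establishing uniformly $\norm{q}_{L^\infty(B)} \leq c(\Lambda)\,\dashint_{B'}\abs{q}\,dx$ for $q \in N(\opA)$ and then $\norm{\Pi_B u}_{L^1(B)} \leq \abs{B}\,\norm{\Pi_B u}_{L^\infty(B)} \leq c(\Lambda)\,\norm{\Pi_B u}_{L^1(B')}$.)

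The main obstacle is exactly this uniform norm comparison: one must prevent degeneration of the constant as $B'$ approaches the boundary of $B$ or shrinks, which is precisely what the compactness of the admissible family of sub-balls, together with the lower bound $\ell(B') \geq 1/\Lambda$, rules out. Everything else is a direct consequence of Theorem~\ref{thm:poincare} and the triangle inequality.
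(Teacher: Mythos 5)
Your proof is correct, but it takes a genuinely different route from the paper's. The paper reuses the construction from the proof of Theorem~\ref{thm:poincare}: it chooses the weight~$\omega$ for the averaged Taylor polynomial with support in~$B'$, so that $\mathcal{P}^l u$ only depends on the values of~$u$ on~$B'$ and hence vanishes; then~\eqref{eq:pre-poincare} (with $\Pi_B(\mathcal{P}^l u)=0$) gives the claim in a single line. You instead treat Theorem~\ref{thm:poincare} as a black box, observe that $\norm{\Pi_B u}_{L^1(B')}=\norm{u-\Pi_B u}_{L^1(B')}\leq\norm{u-\Pi_B u}_{L^1(B)}$ since $u=0$ on $B'$, and then supply the missing ingredient yourself: a uniform norm comparison $\norm{q}_{L^1(B)}\leq c(\Lambda)\norm{q}_{L^1(B')}$ over the finite-dimensional polynomial space $N(\opA)$, with uniformity via a compactness argument over the admissible family of sub-balls. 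What the paper's argument buys is brevity and the fact that it needs no extra lemma — one line, provided the reader still has the proof of Theorem~\ref{thm:poincare} in mind. What your argument buys is modularity: it does not require knowing how Theorem~\ref{thm:poincare} was proved, and the key observation that $\norm{\cdot}_{L^1(B')}$ and $\norm{\cdot}_{L^1(B)}$ are uniformly comparable on $N(\opA)\subset\mathscr{P}_l$ is a reusable fact. Both arguments need the constant to depend only on $\ell(B)/\ell(B')$; in the paper this is implicit in the dependence of the constants in~\eqref{eq:Plu-riesz} and~\eqref{eq:pre-poincare} on the chosen $\omega$, while you make it explicit through the compactness of the parameter set. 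One small remark: your normalisation to $B=\ball(0,1)$ absorbs the factor $\ell(B)$ into the constant, so when you write $\norm{u-\Pi_B u}_{L^1(B)}\leq c\,\abs{\opA u}(B)$ you should be explicit that the rescaling at the end restores $\ell(B)$; you do note this, so the argument is complete.
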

\begin{proof}
  We use the same construction as in the proof of
  Theorem~\ref{thm:poincare}. However, we
  choose~$\omega \in \hold^\infty_c(B)$ in the construction of the
  averaged Taylor polynomial additionally as
  $\omega \in \hold^\infty_c(B')$. This implies that~$\mathcal{P}^lu$
  only depends on the values of~$u$ on~$B'$.  Hence, we obtain
  $\mathcal{P}^lu=0$. Hence, Theorem \ref{thm:poincare} proves the claim.
\end{proof}
Finally, let us remark that variants of Poincar\'{e}--type inequalities can also be established along the lines of \cite[Lem.~8.3.1]{AdHe} or \cite[Chpt.~4]{Zie89}. However, this requires additional extension and compactness arguments which need to be proven independently. 
\section{Traces}
\label{sec:traces}

In this section we show that the space of functions bounded $\A$--variation admits a continuous trace operator to~$L^1(\partial\Omega)$
if and only if $\opA$ is $\setC$-elliptic (or, equivalently: $\opA$ has a finite dimensional nullspace, see Thm. \ref{thm:opA-equiv}).

\subsection{Assumptions on the Domain}
\label{sec:assumptions-domain}

In order to ensure a proper trace we need to make certain regularity assumptions
on~$\Omega$. Our results include all Lipschitz graph
domains. However, we will consider even more general domains. Indeed,
the non-tangentially accessible domains (NTA domains) provide a
natural setting for our construction of the trace operator. We refer
to~\cite{HofMitTay10} for more information on NTA domains.

We begin with the necessary conditions on our domain.
\begin{definition}[Interior/Exterior Corkscrew Condition]
  Let $\Omega \subset \Rn$.
  \begin{enumerate}
  \item We say that~$\Omega$ satisfies the \emph{interior corkscrew
      condition} if there exist~$R>0$ and $M > 2$ such that for all $x
    \in \partial \Omega$ and all~$r \in (0,R)$ there exists a $y \in
    \Omega$ such that
    \begin{align*}
      \frac{1}{M} r\le \abs{x-y} &\leq r \qquad \text{and} \qquad
                                     B(y,r/M) \subset \Omega.
    \end{align*}
  \item We say that~$\Omega$  satisfies the \emph{exterior corkscrew
      condition} if $\Rn \setminus \Omega$ satisfies the interior
    corkscrew condition.
  \end{enumerate}
\end{definition}
\begin{definition}[Harnack Chain Condition]\label{def:hcc}
  We say that~$\Omega \subset \Rn$ satisfies the \emph{(interior) Harnack
  chain condition} if there exist~$R>0$ and $M \in \setN$ such that
  for any~$\epsilon>0$, $r \in (0,R)$, $x \in \partial \Omega$
  and $y_1,y_2 \in B(x,r) \cap \Omega$ with
  $\abs{y_1 - y_2} \leq \epsilon 2^k$ and
  $d(y_j,\partial \Omega) \geq \epsilon$ for $j=1,2$ there exists a
  chain of~$M k$ balls $B_1, \dots, B_{Mk}$ in~$\Omega$
  connecting~$y_1$ and $y_2$ satisfying
  \begin{enumerate}
  \item $y_1 \in B_1$, $y_2 \in B_{Mk}$,
  \item $\frac 1M \ell(B_j) \leq d(B_j, \partial \Omega) \leq M
    \ell(B_j)$ for $j=1, \dots, Mk$,
  \item
    $\ell(B_j) \geq \frac 1M \min \bigset{ d(y_1, B_j), d(y_2,
      B_j)}$ for $j=1, \dots, Mk$.
  \end{enumerate}
\end{definition}
\begin{definition}[NTA domain] \label{def:NTA}
  We say that a domain~$\Omega \subset \Rn$ is an \emph{NTA} (non-tangentially
  accessible) domain if $\Omega$ satisfies the interior corkscrew
  condition, the exterior interior corkscrew condition and the
  interior Harnack chain condition.
\end{definition}

\begin{definition}\label{def:Al}
  We say that~$\Omega \subset \Rn$ has \emph{Ahlfors regular
    boundary} if there exists~$R>0$ and $M>0$ such that for
  all~$r \in (0,R)$
  \begin{align}
    \label{eq:10}
    \frac 1M  r^{n-1} \leq \mathcal{H}^{n-1}(B(x,r) \cap \partial
    \Omega) \leq M r^{n-1}.
  \end{align}
\end{definition}

In the following we tacitly require that our domains satisfy the following assumption:
\begin{assumption}\label{ass:main}
  We assume that~$\Omega$ satisfies the following assumptions:
  \begin{enumerate}
  \item $\Omega$ is an NTA domain.
  \item $\Omega$ has Ahlfors regular boundary.
  \end{enumerate}
\end{assumption}
Note that all Lipschitz graph domains satisfy this assumption.

Let us now construct families of balls that we will use later in the
construction of our traces:

For each~$j \in \setZ$, let $(B_{j,k})_k$ denote a (countable) cover of balls of~$\Rn$
with diameter~$\ell(B_{j,k})$ such that
\begin{enumerate}
\item $\frac 18 \cdot 2^{-j} \leq \ell(B_{j,k}) \leq \frac 14 \cdot
  2^{-j}$.
\item The scaled balls $(\frac 78 B_{j,k})_k$ cover~$\Rn$.
\item Each family~$(B_{j,k})_k$ is locally finite with covering
  constant independent of~$j$, i.e.
  \begin{align*}
    \sup_j \sum_k \chi_{B_{j,k}} \leq c.
  \end{align*}
\end{enumerate}
For each~$j$ let~$(\eta_{j,k})_k$ be a partition of unity with
respect to the $(B_{j,k})_k$ such that for all~$j,k$
\begin{align}
  \label{eq:rho}
  \norm{\eta_{j,k}}_{L^\infty} + \ell(B_{j,k}) 
  \norm{\nabla \eta_{j,k}}_{L^\infty} &\leq c.
\end{align}
Now, we define the $2^{-j}$-neighbourhood~$U_j$ of~$\partial \Omega$ by
\begin{align*}
  U_j &:= \set{ x \in \Omega\,:\, d(x,\partial \Omega) < 2^{-j}}.
\end{align*}
Since $\Omega$ satisfies the interior corkscrew condition, we can
find for each ball~$B_{j,k}$ close to the boundary a \emph{reflected
  ball} $B_{j,k}^\sharp$ close by. We will use these reflected balls later to
define the local projections of our functions. More precisely:
\begin{enumerate}[label={(B\arabic{*})},start=1]
\item \label{itm:B1} There exists~$j_0 \in \setZ$, such that the
  following holds: For each~$B_{j,k}$ with $j \geq j_0$ and
  $B_{j,k} \cap U_j \neq \emptyset$, there exists a
  ball~$B_{j,k}^\sharp \subset \Omega$ with
  $\ell(B_{j,k}^\sharp) \eqsim \ell(B_{j,k}) \eqsim
  d(B_{j,k}^\sharp, \partial \Omega)$ and $d(B_{j,k},B_{j,k}^\sharp)\lesssim \ell(B_{j,k})$,
  where the hidden constants are independent of~$j,k$.
\end{enumerate}
Moreover, due to the Harnack chain condition we can
connect two reflected balls  of neighbouring balls by a small chain of
balls. More precisely, we have the following. 
\begin{enumerate}[label={(B\arabic{*})},start=2]
\item \label{itm:B2} If $B_{j,k}\subset\Omega$ and $j \geq j_0$, then there exists a chain of
  balls~$W_1,\dots, W_\gamma$ with $\gamma$ uniformly bounded, such
  that
  \begin{enumerate}
  \item $W_1 = B_{j,k}$ and $W_\gamma = B_{j,k}^\sharp$;
  \item  $\abs{W_\beta \cap W_{\beta+1}} \eqsim \abs{W_\beta} \eqsim
    \abs{W_{\beta+1}} \eqsim \abs{B_{j,k}}$ for $\beta=1, \dots,\gamma-1$;
  \item $ \ell(W_\beta) \eqsim
    \ell(B_{j,k})$ for $\beta=1,\dots, \gamma$;
  \end{enumerate}
  The hidden constants are independent of~$j,k,\beta$.  

  We define
  $\Omega(B_{j,k},B^\sharp_{j,k}) := \bigcup_{\beta=1}^\gamma
  W_\beta$.
\item \label{itm:B3} If $B_{j,k} \cap B_{l,m} \neq \emptyset$ and
  $j,l \geq j_0$ with $\abs{j-l} \leq 1$, then there exists a chain of
  balls~$W_1,\dots, W_\gamma$ with $\gamma$ uniformly bounded, such
  that
  \begin{enumerate}
  \item $W_1 = B_{j,k}^\sharp$ and $W_\gamma = B_{l,m}^\sharp$;
  \item  $\abs{W_\beta \cap W_{\beta+1}} \eqsim \abs{W_\beta} \eqsim
    \abs{W_{\beta+1}} \eqsim \abs{B_{j,k}}$ for $\beta=1, \dots,\gamma-1$;
  \item $d(W_\beta, \partial \Omega) \eqsim \ell(W_\beta) \eqsim
    \ell(B_{j,k})$ for $\beta=1,\dots, \gamma$;
  \end{enumerate}
  The hidden constants are independent of~$j,k,\beta$.

  We define
  $\Omega(B^\sharp_{j,k},B^\sharp_{l,m}) := \bigcup_{\beta=1}^\gamma
  W_\beta$.
\end{enumerate}
By construction of the chains above, we get:
\begin{enumerate}[label={(B\arabic{*})},start=4]
\item  \label{itm:B4}
  There exists~$k_0 \geq 2$ such that the following holds uniformly
  in~$j \geq j_0$
  \begin{gather*}
    \sum_{m\,:\, B_{j,m} \cap U_j \neq \emptyset}
    \chi_{B^\sharp_{j,m}} \leq c\, \chi_{U_{j-k_0} \setminus U_{j +
                            k_0}},
    \\
    \sum_{m\,:\, B_{j,m} \cap U_j \neq \emptyset} \sum_{k\,:\, B_{j+1,k} \cap B_{j,m} \neq
      \emptyset} \chi_{\Omega(B^\sharp_{j,m},B^\sharp_{j+1,k})} \leq c\,
    \chi_{U_{j-k_0} \setminus U_{j + k_0}}.  
  \end{gather*}
\end{enumerate}

\subsection{Trace operator}
\label{sec:trace-operator}

We will now construct the trace operator of~$\BVA(\Omega)$. We will
obtain the traces by a suitable approximation process. In particular,
we will define truncations~$T_j u$ which are smooth close to the boundary and admit
classical traces. The limits will later provide our trace.

We define
\begin{align*}
  \Pi_{j,k} u := \Pi_{B_{j,k}^\sharp} u.
\end{align*}
Let $\rho_j \in C^\infty(\Omega)$ be such that
$\chi_{U_{j+1}} \leq \rho_j \leq \chi_{U_j}$ and
$\norm{\nabla \rho_j}_\infty \lesssim 2^j$ and let
$u \in \BVA(\Omega)$. Then for~$j \geq j_0$ we define $T_j u$
in~$\Omega$ by
\begin{align}
  \label{eq:def-Tj}
  T_j u &:= u - \rho_j \sum_k \eta_{j,k} \big(u - \Pi_{j,k} u\big) =
          (1-\rho_j) u + \rho_j \sum_k \eta_{j,k} \Pi_{j,k} u.
\end{align}
Due to the support of~$\eta_{j,k}$ the sum in the definition is
locally finite. In particular, the sum is well defined
in~$L^1_{\loc}(\Omega)$. The function~$T_ju$ is an approximation
of~$u$, that replaces the values of~$u$ in the neighborhood
of~$\partial \Omega$ of distance~$2^{-j}$ by local
averages. These averages are performed slightly inside the domain on
the balls~$B_{j,k}^\sharp$.

We begin with an auxiliary estimate involving $\Pi_{j,k} u$.
\begin{lemma}
  \label{lem:Pjk-est}
We have the following estimates:
  \noindent%
  \begin{enumerate}
  \item \label{itm:Pjk-est1} There holds
    \begin{align*}
      \norm{\Pi_{j,k} u}_{L^\infty(B_{j,k})} 
      &\lesssim
        \dashint_{B_{j,k}^\sharp} \abs{u}\,dx.
    \end{align*}
  \item \label{itm:Pjk-est2} If
    $B_{j,m} \cap (U_j \setminus U_{j+2}) \neq \emptyset$, then
    $B_{j,m} \subset \Omega$ and
    \begin{align*}
      \norm{u-\Pi_{j,m} u}_{L^1(B_{j,m})} 
      &\lesssim \ell(B_{j,m})
        \abs{\opA u}\big(\Omega(B_{j,m},B^\sharp_{j,m})\big).
    \end{align*}
  \item \label{itm:Pjk-est3} If
    $B_{j+1,k} \cap B_{j,m} \neq \emptyset$, then
    \begin{align*}
      \abs{B_{j,m}}\, \norm{\Pi_{j+1,k} u - \Pi_{j,m} u}_{L^\infty(B_{j,m})} 
      &\lesssim \ell(B_{j,m})
        \abs{\opA u}\big(\Omega(B^\sharp_{j+1,k},B^\sharp_{j,m})\big) .
    \end{align*}
  \end{enumerate}
\end{lemma}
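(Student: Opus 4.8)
The plan is to reduce all three estimates to two inputs: the $\opA$--Poincar\'e inequality of Theorem~\ref{thm:poincare}, and the elementary fact that on the finite--dimensional space $N(\opA)$ — which by Theorem~\ref{thm:opA-equiv} is contained in $\mathscr{P}_l$ — all relevant norms are comparable with \emph{scale--invariant} constants. (Throughout one has $j\geq j_0$, so that the reflected balls and the chains of \ref{itm:B1}--\ref{itm:B3} are at our disposal, which is the only situation in which the three estimates are used.) Concretely, I would first record as a preliminary observation that there is a constant $c=c(n,l,\theta)$ such that for every ball $B$, every measurable $E\subseteq B$ with $\abs{E}\geq\theta\abs{B}$ and every $q\in N(\opA)$,
\begin{align}\label{eq:plan-norm-comp}
  \norm{q}_{L^\infty(B)} &\leq \frac{c}{\abs{E}}\int_E\abs{q}\,dx,\qquad\text{and in particular}\qquad \norm{q}_{L^\infty(B)}\leq c\,\norm{q}_{L^\infty(B')},
\end{align}
whenever $B'\subseteq B$ with $\ell(B)\leq\theta^{-1}\ell(B')$; both reduce to the unit--ball configuration by dilation and translation, where they follow from the equivalence of norms on $\mathscr{P}_l$ together with a standard sublevel--set bound for polynomials of degree $\leq l$. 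This is the device that moves a projection $\Pi_B u\in N(\opA)$ to any comparable ball.

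For \ref{itm:Pjk-est1}: by \ref{itm:B2} the chain $\Omega(B_{j,k},B^\sharp_{j,k})$ is a uniformly bounded union of balls of diameter $\eqsim\ell(B_{j,k})$, hence $B_{j,k}\cup B^\sharp_{j,k}$ lies in one ball $\widehat B$ with $\ell(\widehat B)\eqsim\ell(B_{j,k})$. Since $\Pi_{j,k}u=\Pi_{B^\sharp_{j,k}}u\in N(\opA)$, the second estimate in \eqref{eq:plan-norm-comp} gives $\norm{\Pi_{j,k}u}_{L^\infty(B_{j,k})}\leq\norm{\Pi_{j,k}u}_{L^\infty(\widehat B)}\lesssim\norm{\Pi_{j,k}u}_{L^\infty(B^\sharp_{j,k})}$, and then \eqref{eq:PiB-Linfty_L1} followed by \eqref{eq:PiB-L1} bounds the right--hand side by $\dashint_{B^\sharp_{j,k}}\abs{\Pi_{j,k}u}\,dx\lesssim\dashint_{B^\sharp_{j,k}}\abs{u}\,dx$, which is the claim.

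For \ref{itm:Pjk-est2} I would run a telescoping argument along the chain. Since $B_{j,m}$ meets $U_j\setminus U_{j+2}$ and $\ell(B_{j,m})\leq\tfrac14 2^{-j}$, the centre of $B_{j,m}$ has distance $\geq\tfrac12\ell(B_{j,m})$ from $\partial\Omega$, so $B_{j,m}\subset\Omega$ and the chain $\Omega(B_{j,m},B^\sharp_{j,m})=\bigcup_{\beta=1}^\gamma W_\beta$ of \ref{itm:B2} (with $W_1=B_{j,m}$, $W_\gamma=B^\sharp_{j,m}$) lies in $\Omega$. For consecutive balls $\Pi_{W_\beta}u-\Pi_{W_{\beta+1}}u\in N(\opA)$, so applying \eqref{eq:plan-norm-comp} on $W_\beta\cap W_{\beta+1}$ — which has measure $\eqsim\abs{B_{j,m}}$ by \ref{itm:B2} — and then Theorem~\ref{thm:poincare} on $W_\beta$ and $W_{\beta+1}$ gives, on a fixed ball $\widehat B\supseteq\Omega(B_{j,m},B^\sharp_{j,m})$ with $\ell(\widehat B)\eqsim\ell(B_{j,m})$,
\begin{align*}
  \norm{\Pi_{W_\beta}u-\Pi_{W_{\beta+1}}u}_{L^\infty(\widehat B)}
  &\lesssim\frac{1}{\abs{B_{j,m}}}\Bigl(\norm{u-\Pi_{W_\beta}u}_{L^1(W_\beta)}+\norm{u-\Pi_{W_{\beta+1}}u}_{L^1(W_{\beta+1})}\Bigr)
  \\
  &\lesssim\frac{\ell(B_{j,m})}{\abs{B_{j,m}}}\bigl(\abs{\opA u}(W_\beta)+\abs{\opA u}(W_{\beta+1})\bigr).
\end{align*}
Summing the identity $u-\Pi_{j,m}u=(u-\Pi_{W_1}u)+\sum_{\beta=1}^{\gamma-1}(\Pi_{W_\beta}u-\Pi_{W_{\beta+1}}u)$ in $L^1(B_{j,m})$ — bounding the first term directly by Theorem~\ref{thm:poincare}, the remaining ones by $\norm{\,\cdot\,}_{L^1(B_{j,m})}\leq\abs{B_{j,m}}\norm{\,\cdot\,}_{L^\infty(\widehat B)}$, and using that $\gamma$ is uniformly bounded and the $W_\beta$ have bounded overlap (so $\sum_\beta\abs{\opA u}(W_\beta)\lesssim\abs{\opA u}(\bigcup_\beta W_\beta)$) — yields $\norm{u-\Pi_{j,m}u}_{L^1(B_{j,m})}\lesssim\ell(B_{j,m})\,\abs{\opA u}(\Omega(B_{j,m},B^\sharp_{j,m}))$.

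Estimate \ref{itm:Pjk-est3} is the identical telescoping run along the chain $\Omega(B^\sharp_{j+1,k},B^\sharp_{j,m})$ of \ref{itm:B3} (all balls involved have comparable diameter since $\abs{j-(j+1)}=1$), now with $W_1=B^\sharp_{j+1,k}$ and $W_\gamma=B^\sharp_{j,m}$: here $B_{j,m}$, $B_{j+1,k}$, the two reflected balls and the connecting chain all lie in a common ball $\widehat B$ with $\ell(\widehat B)\eqsim\ell(B_{j,m})$, so restricting the resulting $L^\infty(\widehat B)$--bound for $\Pi_{j+1,k}u-\Pi_{j,m}u\in N(\opA)$ to $B_{j,m}$ and multiplying by $\abs{B_{j,m}}$ gives precisely the asserted inequality. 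The only genuine difficulty is the bookkeeping — keeping every implied constant independent of $j,k$ — and this is exactly what the structural properties \ref{itm:B1}--\ref{itm:B4} (uniform chain length $\gamma$, the comparabilities $\abs{W_\beta}\eqsim\abs{W_{\beta+1}}\eqsim\abs{W_\beta\cap W_{\beta+1}}\eqsim\abs{B_{j,k}}$ and $d(W_\beta,\partial\Omega)\eqsim\ell(W_\beta)\eqsim\ell(B_{j,k})$) together with the scale invariance of \eqref{eq:plan-norm-comp} are there to provide; no single step is hard once these are in place.
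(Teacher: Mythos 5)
Your proposal is correct and follows essentially the same route as the paper: reduce everything to the $\opA$-Poincar\'e inequality of Theorem~\ref{thm:poincare} and scale-invariant equivalence of norms on the finite-dimensional polynomial space $\mathscr{P}_l\supseteq N(\opA)$, using the chains of~\ref{itm:B2}/\ref{itm:B3} to telescope $\Pi_{W_\beta}u-\Pi_{W_{\beta+1}}u$ across overlapping balls of comparable size. You merely make explicit (via the common enclosing ball $\widehat B$ and the Remez-type estimate~\eqref{eq:plan-norm-comp}) the polynomial norm comparisons that the paper invokes tersely as the ``usual inverse estimate'' and ``equivalence of norms on $N(\opA)$.''
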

\begin{proof}
\noindent
  \begin{enumerate}
  \item Since $\Pi_{j,k}$ maps to $N(\opA)$ and $N(\opA) \subset
    \mathscr{P}_l$, this is just the usual inverse estimate for
    polynomials of a fixed degree.
  \item The definition of $U_j$ and $\ell(B_{j,m}) \leq \frac 14 2^{-j}$
    implies $B_{j,m} \subset \Omega$. 
We compute
\begin{align*}
 & \norm{u-\Pi_{j,m} u}_{L^1(B_{j,m})} = \norm{u-\Pi_{B^\sharp_{j,m}} u}_{L^1(B_{j,m})} \\&\leq\norm{u-\Pi_{B_{j,m}} u}_{L^1(B_{j,m})}+\norm{\Pi_{B_{j,m}} u-\Pi_{B^\sharp_{j,m}} u}_{L^1(B_{j,m})}.
\end{align*}
The first term can be estimated by Poincar\'e's inequality from Theorem~\ref{thm:poincare} which yields immediately
\begin{align*}
\norm{u-\Pi_{B_{j,m}} u}_{L^1(B_{j,m})}\lesssim\ell(B_{j,m})
        \abs{\opA u}\big(B_{j,m}\big).
\end{align*}
For the second term we make use of the Harnack chain conditions (recall Definition~\ref{def:hcc}) and, using \ref{itm:B2}, connect $B_{j,m}$ and $B^\sharp_{j,m}$ by a chain
\begin{align*}
\Omega(B_{j,k},B^\sharp_{j,m})=\bigcup_{\beta=1}^\gamma W_\beta,
\end{align*}
where $W_1,...,W_\gamma$ are balls of size proportional to $\ell(B_{j,m})$. In particular, we have $W_1=B_{j,m}$ and $W_\gamma=B^\sharp_{j,m}$. Moreover, we can assume that $|W_\beta\cap W_{\beta+1}|\eqsim |W_\beta|\eqsim \ell(B_{j,m})$ for all $\beta$.
 Now, we gain
\begin{align*}
\norm{\Pi_{B_{j,m}} u-\Pi_{B^\sharp_{j,m}} u}_{L^1(B_{j,m})}&\leq \sum_{\beta=1}^{\gamma-1}\norm{\Pi_{W_{\beta+1}} u-\Pi_{W_\beta} u}_{L^1(B_{j,m})}\\
&\lesssim \sum_{\beta=1}^{\gamma-1}\norm{\Pi_{W_{\beta+1}} u-\Pi_{W_\beta} u}_{L^1(W_{\beta+1}\cap W_\beta)}\\
&\lesssim \sum_{\beta=1}^{\gamma}\norm{u-\Pi_{W_\beta} u}_{L^1(W_\beta)}
\end{align*}
using equivalence of norms on $N(\A)$. Finally, using again Theorem~\ref{thm:poincare} in conjunction with~\ref{itm:B4}, 
\begin{align*}
\norm{\Pi_{B_{j,m}} u-\Pi_{B^\sharp_{j,m}} u}_{L^1(B_{j,m})}
&\lesssim \ell(B_{j,m})\sum_{\beta=1}^{\gamma}\abs{\opA u}\big(W_\gamma\big)\\&\lesssim\ell(B_{j,m})
        \abs{\opA u}\big(\Omega(B_{j,m},B^\sharp_{j,m})\big).
\end{align*}
Gathering estimates, we arrive at the claim.
  \item First, by the inverse estimate for polynomials, we have
    \begin{align*}
      \abs{B_{j,m}}\, \norm{\Pi_{j+1,k} u - \Pi_{j,m} u}_{L^\infty(B_{j,m})} 
      &\lesssim \norm{\Pi_{j+1,k} u - \Pi_{j,m} u}_{L^1(B_{j,m})} \\
&=\norm{\Pi_{B^\sharp_{j+1,k}} u - \Pi_{B^\sharp_{j,m}} u}_{L^1(B_{j,m})}.
    \end{align*}
Now, connecting $B^\sharp_{j+1,k}$ and $B^\sharp_{j,m}$ via the chain
$\Omega(B^\sharp_{j+1,k},B^\sharp_{j,m})$ (recall~\ref{itm:B3}), we obtain the claim arguing exactly as in b).
  \end{enumerate}
\end{proof}
The following lemma shows that~$T_j$ is well defined on~$L^1(\Omega)$.
\begin{lemma}
  $T_j\,:\, L^1(\Omega) \to L^1(\Omega)$ is linear and bounded.
\end{lemma}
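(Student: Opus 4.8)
The plan is to verify linearity and boundedness directly from the defining formula~\eqref{eq:def-Tj}. Linearity is immediate: the maps $u \mapsto \rho_j u$, $u \mapsto \eta_{j,k} u$ are linear, and each projection $\Pi_{j,k} = \Pi_{B_{j,k}^\sharp}$ is linear by construction (it is an $L^2$-orthogonal projection onto the fixed finite-dimensional subspace~$N(\opA)$, extended to $L^1$ by density while preserving~\eqref{eq:PiB-L1}). Since finite linear combinations and the (locally finite) sum $\sum_k \eta_{j,k}(\cdot)$ commute with scalar multiplication and addition, $T_j$ is linear.

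For boundedness, I would use the second representation $T_j u = (1-\rho_j)u + \rho_j \sum_k \eta_{j,k} \Pi_{j,k} u$. The first term is trivially controlled: $\norm{(1-\rho_j)u}_{L^1(\Omega)} \leq \norm{u}_{L^1(\Omega)}$ since $0 \leq 1-\rho_j \leq 1$. For the second term, I would estimate pointwise and then integrate. Fix $x \in \Omega$; only those $k$ with $x \in B_{j,k}$ contribute, and by the bounded-overlap property (item~(3) of the ball family) there are at most~$c$ of them, with $\abs{\eta_{j,k}(x)} \leq c$ by~\eqref{eq:rho}. On each such ball, Lemma~\ref{lem:Pjk-est}\ref{itm:Pjk-est1} gives $\norm{\Pi_{j,k}u}_{L^\infty(B_{j,k})} \lesssim \dashint_{B_{j,k}^\sharp} \abs{u}\,dx$. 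Hence
\begin{align*}
  \int_\Omega \rho_j \Bigabs{\sum_k \eta_{j,k} \Pi_{j,k} u}\,dx
  &\lesssim \sum_k \int_{B_{j,k}} \abs{\Pi_{j,k} u}\,dx
  \lesssim \sum_k \abs{B_{j,k}} \dashint_{B_{j,k}^\sharp} \abs{u}\,dx.
\end{align*}
Because $\ell(B_{j,k}) \eqsim \ell(B_{j,k}^\sharp)$ uniformly by~\ref{itm:B1}, we have $\abs{B_{j,k}} \eqsim \abs{B_{j,k}^\sharp}$, so $\abs{B_{j,k}} \dashint_{B_{j,k}^\sharp} \abs{u}\,dx \lesssim \int_{B_{j,k}^\sharp} \abs{u}\,dx$; summing and invoking the bounded-overlap of the reflected balls $(B_{j,k}^\sharp)_k$ from~\ref{itm:B4} (they are subordinate to $U_{j-k_0}\setminus U_{j+k_0}$ with uniformly bounded multiplicity) yields $\sum_k \int_{B_{j,k}^\sharp}\abs{u}\,dx \lesssim \norm{u}_{L^1(\Omega)}$. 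Combining the two terms gives $\norm{T_j u}_{L^1(\Omega)} \lesssim \norm{u}_{L^1(\Omega)}$, where the implicit constant may depend on~$j$ (through~$j_0$, $k_0$ and the scale~$2^{-j}$) but not on~$u$; that suffices for the claimed boundedness.

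The only mildly delicate point — and the thing I would be careful to state cleanly — is the justification that the sum $\sum_k \eta_{j,k}\Pi_{j,k}u$ makes sense in $L^1(\Omega)$ and that the interchange of sum and integral above is legitimate. This is handled by the local finiteness of $(B_{j,k})_k$ (each point lies in at most $c$ of the balls), so the sum is pointwise a finite sum and the monotone/Tonelli argument applies to the nonnegative integrands; there is no genuine convergence issue. No finer structure of $N(\opA)$ is needed here beyond the already-established inverse estimate~\eqref{eq:PiB-Linfty_L1}, so I would keep the proof to a few lines, citing Lemma~\ref{lem:Pjk-est}\ref{itm:Pjk-est1}, \eqref{eq:rho}, the bounded-overlap properties, and~\ref{itm:B1}, \ref{itm:B4}.
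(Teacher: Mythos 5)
Your proof is correct and follows essentially the same route as the paper's: a pointwise bound on $\rho_j\sum_k\eta_{j,k}\Pi_{j,k}u$ via Lemma~\ref{lem:Pjk-est}\ref{itm:Pjk-est1}, followed by the comparability $\abs{B_{j,k}}\eqsim\abs{B_{j,k}^\sharp}$ from~\ref{itm:B1} and the bounded overlap of the reflected balls from~\ref{itm:B4}. One small remark: the implicit constant in the final bound is in fact uniform in~$j$ (all the covering and reflection constants in~\ref{itm:B1}--\ref{itm:B4} are $j$-independent), so your hedge that it ``may depend on~$j$'' is unnecessarily cautious, though of course not incorrect.
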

\begin{proof}
  We estimate pointwise on~$\Omega$
  \begin{align}
    \label{eq:Tju-pointwise}
    \begin{aligned}
      \abs{T_j u} &\leq (1-\rho_j) \abs{u} + \rho_j \sum_k
      \chi_{B_{j,k}} \norm{\Pi_{j,k} u}_{L^\infty(B_{j,k})}.
    \end{aligned}
  \end{align}
  With Lemma~\ref{lem:Pjk-est} we get
  \begin{align*}
    \abs{T_j u}  &\lesssim \chi_{\Omega \setminus U_{j+1}} \abs{u}+ \sum_{k\,:\,
        B_{j,k} \cap U_j \neq \emptyset} \chi_{B_{j,k}}
      \dashint_{B_{j,k}^\sharp} \abs{u}\,dx.
  \end{align*}
  This implies
  \begin{align*}
    \norm{T_j u}_{L^1(\Omega)}  
    &\lesssim \norm{u}_{L^1(\Omega \setminus U_{j+1})} + \sum_{k\,:\,
      B_{j,k} \cap U_j \neq \emptyset} \abs{B_{j,k}}
      \dashint_{B_{j,k}^\sharp} \abs{u}\,dx
    \\
    &\lesssim \norm{u}_{L^1(\Omega \setminus U_{j+1})} + \sum_{k\,:\,
      B_{j,k} \cap U_j \neq \emptyset}
      \int_{B_{j,k}^\sharp} \abs{u}\,dx.
  \end{align*}
  Since the $B^\sharp_{j,k}$ are locally finite by~\ref{itm:B4}, we get
  $\norm{T_j u}_{L^1(\Omega)} \lesssim \norm{u}_{L^1(\Omega)}$ as
  desired.
\end{proof}
The next two lemmas show now that $T_{j+1} u - T_j u$ is summable
in~$L^1(\Omega)$ and $\BVA(\Omega)$.
\begin{lemma}
  \label{lem:TdiffL1}
  Let $u \in L^1(\Omega)$ and $j \geq j_0$. Then
  \begin{align*}
    \norm{T_{j+1} u - T_j u}_{L^1(\Omega)} &\lesssim
                                             \norm{u}_{L^1(U_{j+1-k_0}
                                             \setminus U_{j+k_0})}.
  \end{align*}
\end{lemma}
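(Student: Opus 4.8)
The plan is to expand the difference via the second representation in~\eqref{eq:def-Tj},
\begin{align*}
  T_{j+1}u - T_j u &= (\rho_j - \rho_{j+1})\,u + \rho_{j+1}\sum_k \eta_{j+1,k}\,\Pi_{j+1,k}u - \rho_j\sum_m \eta_{j,m}\,\Pi_{j,m}u \\
  &=: \mathrm{I} + \mathrm{II} - \mathrm{III},
\end{align*}
and to estimate $\mathrm{I}$, $\mathrm{II}$ and $\mathrm{III}$ separately in $L^1(\Omega)$. Since $u$ is here only assumed to lie in $L^1(\Omega)$, no cancellation is available and I would rely solely on the crude bound $\norm{\Pi_{j,k}u}_{L^\infty(B_{j,k})} \lesssim \dashint_{B_{j,k}^\sharp}\abs{u}\,dx$ of Lemma~\ref{lem:Pjk-est}\,\ref{itm:Pjk-est1}; this is enough precisely because the target on the right-hand side is itself an $L^1$--norm of $u$ over a thin annular neighbourhood of $\partial\Omega$, rather than a small quantity.

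For $\mathrm{I}$ I would use that $\chi_{U_{j+2}} \leq \rho_{j+1} \leq \chi_{U_{j+1}} \leq \rho_j \leq \chi_{U_j}$, so that $\rho_j - \rho_{j+1}$ takes values in $[0,1]$, vanishes on $\Omega \setminus U_j$ and equals $0$ on $U_{j+2}$; hence $\abs{\mathrm{I}} \leq \chi_{U_j\setminus U_{j+2}}\abs{u}$, and since $k_0 \geq 2$ one has $U_j \setminus U_{j+2} \subseteq U_{j+1-k_0}\setminus U_{j+k_0}$. For $\mathrm{III}$ — and, after replacing $j$ by $j+1$, identically for $\mathrm{II}$ — I would estimate pointwise, using $\spt\eta_{j,m}\subseteq B_{j,m}$, $\norm{\eta_{j,m}}_\infty \lesssim 1$ from~\eqref{eq:rho}, $\rho_j \leq \chi_{U_j}$, and then Lemma~\ref{lem:Pjk-est}\,\ref{itm:Pjk-est1},
\begin{align*}
  \abs{\mathrm{III}} \;\lesssim\; \sum_{m\,:\,B_{j,m}\cap U_j \neq\emptyset} \chi_{B_{j,m}}\,\norm{\Pi_{j,m}u}_{L^\infty(B_{j,m})} \;\lesssim\; \sum_{m\,:\,B_{j,m}\cap U_j \neq\emptyset} \chi_{B_{j,m}}\dashint_{B^\sharp_{j,m}}\abs{u}\,dx ,
\end{align*}
then integrate over $\Omega$, use $\abs{B_{j,m}} \eqsim \abs{B^\sharp_{j,m}}$ from~\ref{itm:B1} to pass to $\sum_{m\,:\,B_{j,m}\cap U_j\neq\emptyset}\int_{B^\sharp_{j,m}}\abs{u}\,dx$, and invoke the uniform local finiteness of the reflected balls from~\ref{itm:B4}, $\sum_{m\,:\,B_{j,m}\cap U_j\neq\emptyset}\chi_{B^\sharp_{j,m}} \lesssim \chi_{U_{j-k_0}\setminus U_{j+k_0}}$. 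This gives $\norm{\mathrm{III}}_{L^1(\Omega)}\lesssim\norm{u}_{L^1(U_{j-k_0}\setminus U_{j+k_0})}$ and, at level $j+1$, $\norm{\mathrm{II}}_{L^1(\Omega)}\lesssim\norm{u}_{L^1(U_{j+1-k_0}\setminus U_{j+1+k_0})}$.

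Combining these three bounds and absorbing all annuli involved into the single annulus $U_{j+1-k_0}\setminus U_{j+k_0}$ — for which the constant $k_0$ in the statement may have to be taken one larger than the one furnished by~\ref{itm:B4} — then yields the asserted estimate. The argument is essentially a bookkeeping exercise, and the only point that requires genuine care is keeping track of which dyadic neighbourhood $U_\bullet$ of $\partial\Omega$ each of the three pieces is supported on, so that the final bound can indeed be phrased over one annulus at scale $2^{-j}$; this is exactly what the construction of the reflected balls $B^\sharp_{j,k}$ in~\ref{itm:B1}, together with their uniform local finiteness~\ref{itm:B4}, is designed to guarantee.
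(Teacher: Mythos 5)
Your proof follows exactly the paper's argument: the same three-term split of $T_{j+1}u-T_ju$ using the second representation in~\eqref{eq:def-Tj}, the same trivial bound for the cutoff difference $(\rho_j-\rho_{j+1})u$, and the same invocation of Lemma~\ref{lem:Pjk-est}\,\ref{itm:Pjk-est1} together with the bounded overlap of the reflected balls from~\ref{itm:B4} for the two projection sums. The one place where you are more careful than the paper is the final combination: as you correctly observe, the three annuli $U_j\setminus U_{j+2}$, $U_{j-k_0}\setminus U_{j+k_0}$ and $U_{j+1-k_0}\setminus U_{j+1+k_0}$ do not all sit inside $U_{j+1-k_0}\setminus U_{j+k_0}$, so either $k_0$ must be enlarged by one or the right-hand side annulus must be widened slightly; the paper's closing sentence glosses over this. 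This is a harmless constant-tracking issue that does not affect the subsequent summability argument in Corollary~\ref{cor:TjlimitL1BVA}, but you were right to flag it.
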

\begin{proof}
  Let $j \geq j_0$. Then we have 
  \begin{align*}
    T_{j+1} u - T_j u 
    &= (\rho_j-\rho_{j+1}) u + \rho_{j+1} \sum_k \eta_{j+1,k} \Pi_{j+1,k}
      u - \rho_j \sum_m \eta_{j,m} \Pi_{j,m} u. 
  \end{align*}
  Now
  \begin{align*}
    \norm{(\rho_j-\rho_{j+1}) u}_{L^1(\Omega)} &\le
                                                 \norm{u}_{L^1(U_j
                                                 \setminus U_{j+2})}.
  \end{align*}
  Moreover, by Lemma~\ref{lem:Pjk-est} (a) it follows that 
  \begin{align*}
    \norm{\rho_j \eta_{j,m} \Pi_{j,m} u}_{L^1(\Omega)} 
    &\le c
      \abs{B_{j,m}} \norm{\Pi_{j,m}
      u}_{L^\infty(B_{j,m})} 
      \leq c\, 
      \norm{u}_{L^1(B^\sharp_{j,m})},
  \end{align*}
  where it suffices to consider those $j$ with
  $B_{j,m} \cap U_j \neq \emptyset$.  Now~\ref{itm:B4} implies
  \begin{align*}
    \sum_m \norm{\rho_j \eta_{j,m} \Pi_{j,m}
    u}_{L^1(\Omega)}& \leq 
                      c\, \norm{u}_{L^1(U_{j-{k_0}}\setminus U_{j+{k_0}})}.
  \end{align*}
  Analogously,
  \begin{align*}
    \sum_k \norm{\rho_j \eta_{j+1,k} \Pi_{j+1,k}
    u}_{L^1(\Omega)}& \leq 
                      c\, \norm{u}_{L^1(U_{j+1-{k_0}}\setminus
                      U_{j+1+k_0})}.
  \end{align*}
  Combining the above estimates proves the lemma.
\end{proof}
\begin{lemma}
  \label{lem:TdiffA}
  Let $u \in \BVA(\Omega)$ and $j \geq j_0$. Then
  \begin{align*}
    \norm{\opA(T_{j+1} u - T_j u)}_{L^1(\Omega)} 
    &\lesssim \abs{\opA u}(U_{j-k_0} \setminus U_{j+k_0}).
  \end{align*}
\end{lemma}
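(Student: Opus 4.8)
The plan is to estimate $\opA(T_{j+1}u - T_j u)$ in $L^1(\Omega)$ by exploiting the product rule \eqref{eq:productrule} together with the fact that the projections $\Pi_{j,k}u$ lie in $N(\opA)$, so all $\opA$-derivatives fall on the cutoff functions $\rho_j, \rho_{j+1}$ and the partition of unity $\eta_{j,k}$. Writing (as in the proof of Lemma~\ref{lem:TdiffL1})
\begin{align*}
  T_{j+1}u - T_j u = (\rho_j - \rho_{j+1})\Bigl(u - \sum_m \eta_{j,m}\Pi_{j,m}u\Bigr) + \rho_{j+1}\sum_k \eta_{j+1,k}\bigl(\Pi_{j+1,k}u - \Pi_{j,m}u\bigr) \cdot (\text{regroup}),
\end{align*}
the key point is that since $\sum_k \eta_{j,k} \equiv 1$ near the boundary and $\sum_k \nabla\eta_{j,k} \equiv 0$ there, one can freely subtract $u$ (or a fixed polynomial) inside the sums to create differences $u - \Pi_{j,k}u$ and $\Pi_{j+1,k}u - \Pi_{j,m}u$, to which Lemma~\ref{lem:Pjk-est} parts \ref{itm:Pjk-est2} and \ref{itm:Pjk-est3} apply.

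First I would apply $\opA$ via \eqref{eq:productrule} to each piece of $T_{j+1}u - T_j u$. The term $(1-\rho_j)u$ contributes $(1-\rho_j)\opA u - u\otimes_\opA \nabla\rho_j$, and similarly for $\rho_{j+1}$; the measure parts $\opA u$ restricted to $U_{j-k_0}\setminus U_{j+k_0}$ are directly of the desired form, while the gradient terms carry $\|\nabla\rho_j\|_\infty \lesssim 2^j \eqsim \ell(B_{j,k})^{-1}$. For these I would use that $\nabla(\rho_j - \rho_{j+1})$ is supported in $U_j\setminus U_{j+2}$ and insert $\sum_m \eta_{j,m}\Pi_{j,m}u$ (which equals a good approximation of $u$) to rewrite the bad term $u\otimes_\opA \nabla(\rho_j-\rho_{j+1})$ as $(u - \sum_m\eta_{j,m}\Pi_{j,m}u)\otimes_\opA\nabla(\rho_j-\rho_{j+1})$ plus a term where $\opA$ hits $\eta_{j,m}$; the first is controlled by $2^j\sum_m\|u - \Pi_{j,m}u\|_{L^1(B_{j,m})} \lesssim \sum_m |\opA u|(\Omega(B_{j,m},B^\sharp_{j,m})) \lesssim |\opA u|(U_{j-k_0}\setminus U_{j+k_0})$ using Lemma~\ref{lem:Pjk-est}\ref{itm:Pjk-est2} and the finite-overlap property \ref{itm:B4}. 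For the terms where $\opA$ differentiates $\eta_{j,k}$ or $\eta_{j+1,k}$, I would use $\sum_k\nabla\eta_{j,k}=0$ (on $U_j$) and $\sum_k\nabla\eta_{j+1,k}=0$ (on $U_{j+1}$) to subtract a common projection, producing $\sum_{k,m}(\Pi_{j+1,k}u - \Pi_{j,m}u)\otimes_\opA\nabla\eta_{j+1,k}$-type sums over neighbouring balls, each bounded via Lemma~\ref{lem:Pjk-est}\ref{itm:Pjk-est3} by $\ell(B_{j,m})^{-1}\cdot\ell(B_{j,m})|\opA u|(\Omega(B^\sharp_{j+1,k},B^\sharp_{j,m}))$, which again sums (by \ref{itm:B4}) to $|\opA u|(U_{j-k_0}\setminus U_{j+k_0})$.

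The main obstacle I expect is the careful bookkeeping of the cancellations: one must regroup the three groups of terms in $T_{j+1}u - T_j u$ so that wherever a bare $\nabla\rho$ or $\nabla\eta$ appears it is paired against a \emph{difference} (either $u$ minus a local projection, or two local projections on overlapping balls) rather than against $u$ itself or a single projection, since $\|u\otimes_\opA\nabla\rho_j\|_{L^1}$ alone is not small. Concretely, since $\rho_{j+1}\sum_k\eta_{j+1,k} - \rho_j\sum_m\eta_{j,m}$ need not telescope cleanly, I would first fix a reference ball among the $B_{j,m}$ meeting a given $B_{j+1,k}$ and write $\Pi_{j+1,k}u = \Pi_{j,m}u + (\Pi_{j+1,k}u - \Pi_{j,m}u)$, absorbing the reference part into the $\rho_j$-sum and keeping only the differences against the varying $\nabla\eta$'s; the boundedly many chains connecting neighbours (\ref{itm:B2}, \ref{itm:B3}) guarantee the constants are uniform in $j,k,m$. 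Once all terms are in the form $\ell(B)^{-1}\cdot\ell(B)\cdot|\opA u|(\text{chain})$, summing over $k,m$ and invoking the overlap bound \ref{itm:B4} gives the stated estimate $\norm{\opA(T_{j+1}u - T_j u)}_{L^1(\Omega)} \lesssim |\opA u|(U_{j-k_0}\setminus U_{j+k_0})$.
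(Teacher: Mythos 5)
Your proposal is correct and follows essentially the same route as the paper: the paper's proof writes $T_{j+1}u - T_ju = (\rho_j-\rho_{j+1})\sum_m\eta_{j,m}(u-\Pi_{j,m}u) + \rho_{j+1}\sum_{k,m}\eta_{j+1,k}\eta_{j,m}(\Pi_{j+1,k}u-\Pi_{j,m}u)$ using $\sum_m\eta_{j,m}=\sum_k\eta_{j+1,k}=1$, applies $\opA$ so that (since each $\Pi u\in N(\opA)$) only cutoff gradients of size $\eqsim 2^j\eqsim\ell(B_{j,m})^{-1}$ appear, and then invokes Lemma~\ref{lem:Pjk-est}~\ref{itm:Pjk-est2} and \ref{itm:Pjk-est3} together with the finite overlap \ref{itm:B4}, exactly as you propose. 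The only cosmetic difference is that you entertain picking a single reference ball $B_{j,m}$ per $B_{j+1,k}$ rather than the symmetric double sum weighted by $\eta_{j+1,k}\eta_{j,m}$; both do the same job, and the paper's symmetric version is slightly tidier to sum via \ref{itm:B4}.
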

\begin{proof}
  Using that $\sum_m \eta_{j,m} = \sum_k \eta_{j+1,k} =1$ in~$\Omega$
  we get
  \begin{align}
    \label{eq:9}
    \begin{aligned}
      T_{j+1} u - T_j u &= (\rho_j-\rho_{j+1}) \sum_m \eta_{j,m} (u -
      \Pi_{j,m} u)
      \\
      &\quad + \rho_{j+1} \sum_{k,m} \eta_{j+1,k} \eta_{j,m}
      (\Pi_{j+1,k} u - \Pi_{j,m} u)
      \\
      &=: I + II.
    \end{aligned}
  \end{align} 
In order to estimate $\norm{\opA(T_{j+1} u - T_j u)}_{L^1(\Omega)} $
it is crucial that $\opA\Pi_{j+1,k} u =\opA \Pi_{j,m} u=0$ and the gradients of
$\rho_j,\rho_{j+1}, \eta_{j,m}$ and $\eta_{j+1,k}$ are bounded by $2^j$, recall \eqref{eq:rho}.
  Let us consider~$II$. We only have to estimate those summands with~$k,m$ satisfying
  $B_{j+1,k} \cap B_{j,m} \neq \emptyset$ since otherwise
  $\eta_{j+1,k} \eta_{j,m}=0$. For each such~$k,m$ we estimate
  the~$L^1(\Omega)$-norm of $\opA II$ by
  Lemma~\ref{lem:Pjk-est}~\ref{itm:Pjk-est3}. Now, in combination
  with~\ref{itm:B4} we get
  \begin{align*}
    \|\opA II\|_{L^1(\Omega)} &\lesssim \abs{\opA u}(U_{j-k_0} \setminus U_{j+k_0}).
  \end{align*}
  Let us consider~$I$. We only need to estimate those summands
  with~$m$ satisfying
  $B_{j,m} \cap (U_j \setminus U_{j+2}) \neq \emptyset$, since
  otherwise $(\rho_j-\rho_{j+1}) \eta_{j,m}=0$. For each such~$m$ we
  estimate the~$L^1(\Omega)$-norm of $\opA I$ by
  Lemma~\ref{lem:Pjk-est}~\ref{itm:Pjk-est2}. Now, in combination
  with~\ref{itm:B4} we get
  \begin{align*}
    \|\opA I\|_{L^1(\Omega)} &\lesssim \abs{\opA u}(U_{j-k_0} \setminus U_{j+k_0}).
  \end{align*}
  The proof is complete.
\end{proof}
Based on the two lemmas above, we now study the convergence $T_j u \to u$.
\begin{corollary}
  \label{cor:TjlimitL1BVA}
  If $u \in L^1(\Omega)$, then
  \begin{align}
    \label{eq:TjlimitL1BVA}
    u &= T_{j_0} u + \sum_{l=j_0}^\infty \big( T_{l+1} u - T_l u\big)
        = \lim_{j \to \infty} T_j u
  \end{align}
  in $L^1(\Omega)$. If additionally $u \in \BVA(\Omega)$,
  then~\eqref{eq:TjlimitL1BVA} also holds in $\BVA(\Omega)$.
\end{corollary}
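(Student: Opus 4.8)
The plan is to deduce everything from the two summability estimates just established, namely Lemma~\ref{lem:TdiffL1} and Lemma~\ref{lem:TdiffA}. First I would observe that the neighbourhoods $U_{j-k_0}\setminus U_{j+k_0}$ have bounded overlap: since $k_0$ is a fixed integer, each point of $\Omega$ lies in at most $2k_0$ of the sets $U_{j-k_0}\setminus U_{j+k_0}$ as $j$ ranges over $\setN$. Consequently, for $u\in L^1(\Omega)$,
\begin{align*}
  \sum_{j=j_0}^\infty \norm{u}_{L^1(U_{j+1-k_0}\setminus U_{j+k_0})} \lesssim \norm{u}_{L^1(\Omega)} < \infty,
\end{align*}
and likewise, for $u\in\BVA(\Omega)$, since $\abs{\opA u}$ is a finite measure,
\begin{align*}
  \sum_{j=j_0}^\infty \abs{\opA u}(U_{j-k_0}\setminus U_{j+k_0}) \lesssim \abs{\opA u}(\Omega) < \infty.
\end{align*}
Plugging these into Lemma~\ref{lem:TdiffL1} and Lemma~\ref{lem:TdiffA} shows that the telescoping series $\sum_{l\geq j_0}(T_{l+1}u - T_l u)$ converges absolutely in $L^1(\Omega)$, and in $\BVA(\Omega)$ when $u\in\BVA(\Omega)$; in particular the limit $\lim_{j\to\infty} T_j u$ exists in the respective space and equals $T_{j_0}u + \sum_{l\geq j_0}(T_{l+1}u - T_l u)$.

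It then remains to identify this limit with $u$ itself. For this I would use that $T_j u$ differs from $u$ only on $U_j$: by the definition~\eqref{eq:def-Tj}, $T_j u - u = -\rho_j \sum_k \eta_{j,k}(u-\Pi_{j,k}u)$, which is supported in $\{\rho_j\neq 0\}\subset \overline{U_j}$. Using the pointwise bound from Lemma~\ref{lem:Pjk-est}\ref{itm:Pjk-est1} together with the local finiteness of the reflected balls from~\ref{itm:B4}, one gets $\norm{T_j u - u}_{L^1(\Omega)} \lesssim \norm{u}_{L^1(U_{j-k_0})}$, and since $\abs{U_{j-k_0}}\to 0$ as $j\to\infty$ (because $\partial\Omega$ is Ahlfors regular, hence Lebesgue-null, so $\bigcap_j U_j = \emptyset$), dominated convergence forces $T_j u \to u$ in $L^1(\Omega)$. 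This pins down the limit, giving the first identity in~\eqref{eq:TjlimitL1BVA}. For the $\BVA$ statement, once we know $T_j u \to u$ in $L^1$ and $(T_j u)$ is Cauchy in $\BVA$, the limit in $\BVA$ must coincide with $u$, and lower semicontinuity of the $\A$-variation (or simply uniqueness of limits) shows $\opA T_j u \to \opA u$ in the $L^1$-norm sense asserted; hence~\eqref{eq:TjlimitL1BVA} holds in $\BVA(\Omega)$ as well.

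I do not expect any genuine obstacle here: the corollary is a bookkeeping consequence of the preceding two lemmas, the key mechanism being that $\sum_j \chi_{U_{j-k_0}\setminus U_{j+k_0}}$ is uniformly bounded so that a finite measure (either $\abs{u}\,dx$ or $\abs{\opA u}$) has summable mass over this family. The only mild point to be careful about is that the $\BVA$-convergence is stated via $\norm{\opA(T_{j+1}u - T_j u)}_{L^1(\Omega)}$, i.e. the approximants $T_j u$ lie in $\WA(\Omega)$ rather than merely $\BVA(\Omega)$ away from $u$; but $T_j u - u$ contributes no singular part since it equals a smooth expression times $\rho_j$ plus terms in $\WA$, so summability in the $\WA$-seminorm of the increments combined with the fixed $\BVA$-datum $u = T_{j_0}u + (u - T_{j_0}u)$ is exactly what is needed. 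I would simply remark that both assertions follow, and refer forward to the fact that this is what makes $\trace(u) := \lim_j \trace(T_j u)$ well defined.
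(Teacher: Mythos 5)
Your argument is correct and follows essentially the same route as the paper: the telescoping decomposition, summability of the increments from the bounded overlap of the annuli $U_{j-k_0}\setminus U_{j+k_0}$ (which is exactly what the paper calls their being ``locally finite with respect to~$j$''), and identification of the limit via convergence of $T_ju$ to $u$. One minor imprecision: the fact that $\bigcap_j U_j=\emptyset$ has nothing to do with Ahlfors regularity or $\partial\Omega$ being Lebesgue-null---it is simply because $U_j\subset\Omega$ and every $x\in\Omega$ has $d(x,\partial\Omega)>0$, so $x\notin U_j$ for $j$ large, whence $|U_j|\to 0$ by downward continuity of the finite measure $\mathscr{L}^n\mres\Omega$ (the paper sidesteps this entirely by noting $T_ju=u$ on any compact $K\Subset\Omega$ for $j$ large, giving $L^1_{\loc}$ convergence).
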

\begin{proof}
  Since $\rho_j \to 0$ in $L^1_{\loc}(\Omega)$, it is clear
  that~$T_j u \to u$ in $L^1_{\loc}(\Omega)$.

  Note that for~$j \geq j_0$
  \begin{align}
    \label{eq:11}
    T_j u &= T_{j_0} u + \sum_{l=j_0}^{j-1} \big( T_{l+1} u - T_l u\big) 
  \end{align}
  It follows from Lemma~\ref{lem:TdiffL1} and Lemma~\ref{lem:TdiffA}
  that $T_{l+1} u - T_l u$ are summable in~$L^1(\Omega)$,
  resp. in~$\BVA(\Omega)$, since the $U_{j+1-k_0} \setminus U_{j+k_0}$
  are locally finite with respect to~$j$. Hence, $T_j u$ is a Cauchy sequence
  in~$L^1(\Omega)$, resp. in~$\BVA(\Omega)$.  Since the limit must
  agree with the~$L^1_{\loc}(\Omega)$ limit, which is~$u$, the claim
  follows.
\end{proof}
Since $T_ju$ is smooth close to the boundary~$\partial\Omega$, it is
possible to evaluate the classical trace~$\trace(T_j u)$. We now
show that these traces form a $L^1(\partial \Omega)$-Cauchy sequence.
\begin{lemma}
  \label{lem:trace-cauchy}
  Let $u \in \BVA(\Omega)$. Then
  \begin{align*}
    \norm{\trace(T_{j+1} u) - \trace(T_j u)}_{L^1(\partial \Omega)}
    &\lesssim 
      \abs{\opA u}(U_{j-k_0} \setminus U_{j + k_0})
      \\
    \intertext{and}
    \bignorm{\trace(T_{j_0} u)}_{L^1(\partial \Omega)} 
    &\lesssim 2^{j_0} \norm{u}_{L^1(U_{j_0-k_0}\setminus U_{j_0 + k_0})}.
  \end{align*}
\end{lemma}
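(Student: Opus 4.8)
The plan is to bound each boundary trace difference by the corresponding volume quantities near $\partial\Omega$, exploiting that $T_j u$ and $T_{j+1}u$ are smooth in a neighbourhood of $\partial\Omega$ and that their difference is, up to the cutoff structure, localised to the shell $U_{j-k_0}\setminus U_{j+k_0}$. The key observation is that the classical trace operator $\trace\colon \WA(V)\to L^1(\partial V)$ on a nice reference configuration (or on a boundary chart of $\Omega$) is bounded by the $\WA$-norm on a collar: for a function $w\in C^\infty\cap\WA$ near $\partial\Omega$ one has $\norm{\trace(w)}_{L^1(\partial\Omega)}\lesssim \ell^{-1}\norm{w}_{L^1(\text{collar of width }\ell)} + \norm{\opA w}_{L^1(\text{collar})}$, which itself follows from the one-dimensional fundamental theorem of calculus in the (essentially) normal direction combined with the Poincaré-type control of $u-\Pi_B u$ from Theorem~\ref{thm:poincare}. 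I would state and use such a collar trace estimate as the main technical device.

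First I would treat the second (simpler) inequality. Since $T_{j_0}u = (1-\rho_{j_0})u + \rho_{j_0}\sum_k \eta_{j_0,k}\Pi_{j_0,k}u$ and $\trace$ only sees the behaviour of $T_{j_0}u$ in a thin collar at $\partial\Omega$, where $T_{j_0}u = \sum_k \eta_{j_0,k}\Pi_{j_0,k}u$ is a finite sum (locally) of polynomials $\Pi_{j_0,k}u\in N(\opA)$, I apply the collar trace estimate on each relevant boundary ball $B_{j_0,k}$ and use Lemma~\ref{lem:Pjk-est}(i) to get $\norm{\Pi_{j_0,k}u}_{L^\infty(B_{j_0,k})}\lesssim \dashint_{B^\sharp_{j_0,k}}\abs{u}\,dx$. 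Multiplying by $\mathcal{H}^{n-1}(B_{j_0,k}\cap\partial\Omega)\eqsim \ell(B_{j_0,k})^{n-1}\eqsim (2^{-j_0})^{n-1}$ by Ahlfors regularity, summing over $k$, and using the local finiteness of the $B^\sharp_{j_0,k}$ from~\ref{itm:B4}, I obtain $\norm{\trace(T_{j_0}u)}_{L^1(\partial\Omega)}\lesssim 2^{j_0}\norm{u}_{L^1(U_{j_0-k_0}\setminus U_{j_0+k_0})}$, where the factor $2^{j_0} = 2^{-j_0}\cdot (2^{-j_0})^{n-1}/(2^{-j_0})^{n}$ comes from comparing the surface measure to the $n$-dimensional measure of the reflected balls.

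For the first inequality I would use the decomposition~\eqref{eq:9} of $T_{j+1}u - T_j u$ into the two pieces $I$ and $II$ exactly as in the proof of Lemma~\ref{lem:TdiffA}. Both pieces are finite sums (locally) of the functions $\eta_{j,m}(u-\Pi_{j,m}u)$ resp.\ $\eta_{j+1,k}\eta_{j,m}(\Pi_{j+1,k}u-\Pi_{j,m}u)$, supported on boundary balls $B_{j,m}$, and each such summand is a genuine $\WA$-function on its ball with $\opA$ of the summand controlled by $\opA u$ on the associated Harnack chains. Applying the collar trace estimate to each summand: for $II$ the summand is a difference of polynomials in $N(\opA)$, so its trace is bounded in $L^1(\partial\Omega\cap B_{j,m})$ by $\ell(B_{j,m})^{n-1}\norm{\Pi_{j+1,k}u-\Pi_{j,m}u}_{L^\infty(B_{j,m})}$; combined with Lemma~\ref{lem:Pjk-est}(iii) and Ahlfors regularity this gives $\lesssim \abs{\opA u}(\Omega(B^\sharp_{j+1,k},B^\sharp_{j,m}))$, and~\ref{itm:B4} yields the claimed $\abs{\opA u}(U_{j-k_0}\setminus U_{j+k_0})$ after summation. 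For $I$ the summand $\eta_{j,m}(u-\Pi_{j,m}u)$ is smooth only after we have already passed to the approximant; but note $T_{j+1}u - T_j u$ itself is smooth in the collar where $\trace$ acts (since there $\rho_j\equiv\rho_{j+1}\equiv 1$ the term $I$ actually vanishes near $\partial\Omega$, or more carefully $(\rho_j-\rho_{j+1})$ is supported in $U_j\setminus U_{j+2}$), so the collar trace estimate applies to $(\rho_j-\rho_{j+1})\eta_{j,m}(u-\Pi_{j,m}u)$ and is controlled via Lemma~\ref{lem:Pjk-est}(ii) by $\abs{\opA u}(\Omega(B_{j,m},B^\sharp_{j,m}))$ plus the gradient-of-cutoff term $\ell(B_{j,m})^{-1}\norm{u-\Pi_{j,m}u}_{L^1(B_{j,m})}$, which by Theorem~\ref{thm:poincare} is again $\lesssim \abs{\opA u}(\Omega(B_{j,m},B^\sharp_{j,m}))$; summation via~\ref{itm:B4} closes it.

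The main obstacle I expect is making the ``collar trace estimate'' rigorous in the NTA / Ahlfors-regular setting, since there $\partial\Omega$ need not be a Lipschitz graph and the fundamental-theorem-of-calculus argument in the normal direction is not literally available; one must instead work in boundary balls, use the interior corkscrew points and Harnack chains to connect a boundary ball to its reflected ball $B^\sharp_{j,m}$, and build the trace as a limit along such chains — essentially the argument already packaged in~\ref{itm:B1}--\ref{itm:B4} and Lemma~\ref{lem:Pjk-est}. A secondary point requiring care is the bookkeeping of the weights $2^{-j}$: every application of the collar estimate introduces one factor $\ell(B_{j,m})$ from Poincaré, which combines with the $\ell(B_{j,m})^{n-1}\eqsim \mathcal{H}^{n-1}(B_{j,m}\cap\partial\Omega)$ from Ahlfors regularity and the $\ell(B_{j,m})^{-n}$ from the mean value $\dashint_{B^\sharp_{j,m}}$, so that the $\opA u$-estimates come out scale-invariantly (no power of $2^j$) while the pure $L^1$-estimate for $\trace(T_{j_0}u)$ retains exactly one factor $2^{j_0}$, consistent with the statement.
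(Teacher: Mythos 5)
Your concrete estimates line up with the paper's, but the framing around a ``collar trace estimate'' is a detour that makes the argument seem harder than it is and introduces a worry (rigorising such an estimate on an NTA domain) that the paper never has to face. The key point the paper exploits, and which you touch on but don't quite commit to, is that near $\partial\Omega$ the functions $T_{j_0}u$ and $T_{j+1}u-T_ju$ are \emph{continuous} up to the boundary: on a collar where $\rho_{j_0}\equiv 1$ (resp. $\rho_j\equiv\rho_{j+1}\equiv 1$, which kills term $I$ in \eqref{eq:9} identically), they are locally finite sums of compactly supported functions of the form $\eta_{j_0,k}\Pi_{j_0,k}u$ (resp. $\eta_{j+1,k}\eta_{j,m}(\Pi_{j+1,k}u-\Pi_{j,m}u)$), i.e.\ cutoff times polynomial from $\mathscr{P}_l$. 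For such a function $w$ supported in a ball $B$, the classical trace is literally its boundary restriction, and the estimate used is just $\norm{\trace(w)}_{L^1(\partial\Omega)}\le\norm{w}_{L^\infty(B)}\,\mathcal{H}^{n-1}(\partial\Omega\cap B)$ — no fundamental theorem of calculus, no Poincar\'e in the normal direction, no collar trace estimate. You in fact apply exactly this $L^\infty\times\mathcal{H}^{n-1}$ bound when you work out the arithmetic (combining Lemma~\ref{lem:Pjk-est}, Ahlfors regularity of $\partial\Omega$, and~\ref{itm:B4}), so your computation is right; but the proof does not need the estimate $\norm{\trace(w)}_{L^1(\partial\Omega)}\lesssim\ell^{-1}\norm{w}_{L^1(\text{collar})}+\norm{\opA w}_{L^1(\text{collar})}$ at all, and the obstacle you flag at the end therefore evaporates. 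One smaller inaccuracy: for the $I$-term you offer an alternative treatment via a collar estimate applied to $(\rho_j-\rho_{j+1})\eta_{j,m}(u-\Pi_{j,m}u)$ with $u\in\BVA(\Omega)$ merely of bounded $\opA$-variation; that path is both unnecessary and delicate, since $u$ itself is not continuous and one would then genuinely have to justify a trace bound for it. The clean observation — that $I$ vanishes on a full neighbourhood of $\partial\Omega$ because $\rho_j-\rho_{j+1}$ is supported in $U_j\setminus U_{j+2}$ — settles it outright, as the paper does.
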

\begin{proof}
  We begin with the first estimate. It follows from~\eqref{eq:9} that
  \begin{align*}
    \trace(T_{j+1} u) - \trace(T_j u) 
    &=  \sum_{k,m} \trace \big(\eta_{j+1,k} \eta_{j,m}
      (\Pi_{j+1,k} u - \Pi_{j,m} u) \big),
  \end{align*}
  where the sums are locally finite sums.  Hence,
  \begin{align*}
    \norm{\trace(T_{j+1} u) - \trace(T_j u)}_{L^1(\partial \Omega)}
    &\leq \sum_{k,m} \bignorm{\trace(\eta_{j+1,k} \eta_{j,m}
      (\Pi_{j+1,k} u - \Pi_{j,m} u))}_{L^1(\partial \Omega)}.
  \end{align*}
  We only have to consider those~$k,m$ with $B_{j+1,k} \cap B_{j,m}
  \neq \emptyset$. For such~$k,m$
  \begin{align*}
    \lefteqn{\bignorm{\trace \big(\eta_{j+1,k} \eta_{j,m}
    (\Pi_{j+1,k} u - \Pi_{j,m} u) \big)}_{L^1(\partial \Omega)}} \qquad 
    &
    \\
    &\leq
      \bignorm{\Pi_{j+1,k} u - \Pi_{j,m} u}_{L^\infty(B_{j,m})} \,
      \mathcal{H}^{n-1} ( \partial \Omega \cap B_{j+1,k} \cap 
      B_{j,m}). 
  \end{align*}
  We estimate the first factor by
  Lemma~\ref{lem:Pjk-est}~\ref{itm:Pjk-est3} and the second by the
  Ahlfors regularity of the boundary, see~\eqref{eq:10}, and thereby obtain
  \begin{align*}
    \bignorm{\trace \big(\eta_{j+1,k} \eta_{j,m}
    (\Pi_{j+1,k} u - \Pi_{j,m} u) \big)}_{L^1(\partial \Omega)}
    &\lesssim
      \abs{\opA
      u}\big(\Omega(B^\sharp_{j+1,k},B^\sharp_{j,m})\big).
  \end{align*}
  Summing over~$k$ and $m$ and using~\ref{itm:B4} implies
  \begin{align*}
    \norm{\trace(T_{j+1} u) - \trace(T_j u)}_{L^1(\partial \Omega)}
    &\lesssim 
      \abs{\opA u}(U_{j-k_0} \setminus U_{j + k_0}).
  \end{align*}
  This proves the first estimate.

  Let us now estimate $\norm{\trace(T_{j_0})}_{L^1(\partial
    \Omega)}$. We begin with
  \begin{align*}
    \trace(T_{j_0}) &= \sum_k \trace\big(\eta_{j_0,k} \Pi_{j_0,k} u\big).
  \end{align*}
  For each~$k$ with $B_{j_0,k} \cap \partial \Omega$ there holds
  \begin{align*}
    \bignorm{\trace\big(\eta_{j_0,k} \Pi_{j_0,k}
    u\big)}_{L^1(\partial \Omega)} 
    &\leq \norm{\Pi_{j_0,k} u}_{L^\infty(B_{j_0,k})}
      \mathcal{H}^{n-1}(\partial \Omega \cap B_{j_0,k}).
  \end{align*}
  We estimate the first factor by
  Lemma~\ref{lem:Pjk-est}~\ref{itm:Pjk-est1} and the second by the
  Ahlfors regularity of the boundary, see~\eqref{eq:10}. This gives
  \begin{align*}
    \bignorm{\trace\big(\eta_{j_0,k} \Pi_{j_0,k}
    u\big)}_{L^1(\partial \Omega)} 
    &\lesssim \frac{1}{\ell(B_{j_0})} \int_{B^\sharp_{j_0,k}} \abs{u}\,dx.
  \end{align*}
  Summing over~$k$ and $m$ and using~\ref{itm:B4} implies
  \begin{align*}
    \bignorm{\trace(T_{j_0} u)}_{L^1(\partial \Omega)} 
    &\lesssim 2^{j_0} \norm{u}_{L^1(U_{j_0-k_0}\setminus U_{j_0 + k_0})}.
  \end{align*}
  This proves the claim.
\end{proof}  
Recall that by Corollary~\ref{cor:TjlimitL1BVA} we have
\begin{align*}
  u &= T_{j_0} u + \sum_{l=j_0}^\infty \big( T_{l+1} u - T_l u\big)
      = \lim_{j \to \infty} T_j u
\end{align*}
in $\BVA(\Omega)$. Morover, Lemma~\ref{lem:trace-cauchy} shows that
\begin{align*}
  \trace(T_{j_0}u) + \sum_{j \geq j_0} \big(\trace(T_{j+1} u) -
  \trace(T_j u)\big) = \lim_{j \to \infty} \trace(T_j(u)).
\end{align*}
is well defined in~$L^1(\partial \Omega)$. Finally,
\begin{align*}
  \bignorm{\lim_{j \to \infty} \trace(T_j(u))}_{L^1(\partial \Omega)} 
  &\leq
    \bignorm{\trace(T_{j_0}(u))}_{L^1(\partial \Omega)} + \sum_{j
    \geq j_0} \bignorm{\trace(T_{j+1} u) -
    \trace(T_j u)}_{L^1(\partial \Omega)}
  \\
  &\lesssim 2^{j_0} \norm{u}_{L^1(U_{j_0-k_0}\setminus U_{j_0 + k_0})}
    + \sum_{j
    \geq j_0}    \abs{\opA u}(U_{j-k_0} \setminus U_{j + k_0}) 
  \\
  &\lesssim \norm{u}_{L^1(\Omega)}
    + \abs{\opA u}(\Omega)
\end{align*}
by Lemma \ref{lem:trace-cauchy}.
This allows us to define for every~$u \in \BVA(\Omega)$ a trace
\begin{align}
  \label{eq:def-tracenew}
  \tracenew(u) &:= \lim_{j \to \infty} \trace(T_j u),
\end{align}
the limit being understood in the $\lebe^{1}(\partial\Omega)$-sense. This limit satisfies  
\begin{align}
  \label{eq:est-tracenew}
  \bignorm{\tracenew(u)}_{L^1(\partial \Omega)} 
  &\lesssim \norm{u}_{L^1(\Omega)}
    + \abs{\opA u}(\Omega).
\end{align}
We now show that~$\tracenew$ coincides with~$\trace$ for all smooth
functions and hence start with an approximation result.
\begin{lemma}
  \label{lem:TjlimitC0}
  Let $u \in C^0(\overline{\Omega})$ be uniformly continuous. Then
  $T_j u \to u$ in~$C^0(\overline{\Omega})$.
\end{lemma}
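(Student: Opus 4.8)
The plan is to show that $T_j u$ converges to $u$ \emph{uniformly} on $\overline{\Omega}$ by bounding $u-T_j u$ pointwise in terms of the modulus of continuity of $u$ at scale $2^{-j}$. First I would use the second form of the definition~\eqref{eq:def-Tj} to write, for $j \geq j_0$,
\[
  u - T_j u = \rho_j \sum_k \eta_{j,k}\,(u - \Pi_{j,k} u),
\]
and then, using $0 \le \rho_j \le 1$ together with $\eta_{j,k} \geq 0$ and $\sum_k \eta_{j,k} \equiv 1$, I would obtain for every $x \in \overline{\Omega}$ the pointwise bound $|u(x) - T_j u(x)| \le \sum_k \eta_{j,k}(x)\,|u(x) - (\Pi_{j,k}u)(x)|$, where only indices $k$ with $x \in \spt(\rho_j\eta_{j,k})$ contribute. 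Since $\spt \rho_j \subset \overline{U_j}$, for every such $k$ the ball $B_{j,k}$ meets $U_j$, so the reflected ball $B_{j,k}^\sharp$ is defined.

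The second step is to exploit that constant maps lie in $N(\opA)$ (as $\opA$ is homogeneous of first order), so the $L^2$-projection $\Pi_{j,k} = \Pi_{B_{j,k}^\sharp}$ restricts to the identity on constants. Hence $\Pi_{j,k}(u - u(x)) = \Pi_{j,k} u - u(x)$; evaluating this identity at the point $x \in B_{j,k}$ and applying the polynomial inverse estimate underlying Lemma~\ref{lem:Pjk-est}\ref{itm:Pjk-est1} with $u$ replaced by $u - u(x)$ gives
\[
  \bignorm{u(x) - (\Pi_{j,k}u)(x)}_{} = \bigl|(\Pi_{j,k}(u-u(x)))(x)\bigr| \le \bignorm{\Pi_{j,k}(u - u(x))}_{L^\infty(B_{j,k})} \lesssim \dashint_{B_{j,k}^\sharp} |u(y) - u(x)|\,dy.
\]
So the whole question is reduced to controlling the oscillation of $u$ between $x \in B_{j,k}$ and points $y \in B_{j,k}^\sharp$.

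The third step is this oscillation bound. By properties~\ref{itm:B1}--\ref{itm:B2} the balls $B_{j,k}$ and $B_{j,k}^\sharp$ are connected by a chain of uniformly boundedly many balls of diameter $\eqsim 2^{-j}$, so $B_{j,k} \cup B_{j,k}^\sharp$ is contained in a ball of radius $\le C 2^{-j}$; since $x \in B_{j,k}$, every $y \in B_{j,k}^\sharp$ satisfies $|y - x| \le C 2^{-j}$. Writing $\omega_u$ for the modulus of continuity of $u$ on the compact set $\overline{\Omega}$ (so $\omega_u(t) \to 0$ as $t \to 0^+$), the last integral is $\le \omega_u(C 2^{-j})$. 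Feeding this back through the previous two displays yields the uniform estimate $\norm{u - T_j u}_{C^0(\overline{\Omega})} \lesssim \omega_u(C 2^{-j}) \to 0$ as $j \to \infty$ (note that near $\partial\Omega$ one has $\rho_j \equiv 1$, so there $T_j u = \sum_k \eta_{j,k}\Pi_{j,k} u$, a locally finite sum of polynomials, hence $T_j u$ indeed extends continuously to $\overline{\Omega}$).

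I expect the only genuinely delicate point to be the comparability and proximity of $B_{j,k}$ and its reflection $B_{j,k}^\sharp$ — needed both to apply the $L^\infty$--$L^1$ polynomial estimate across the two balls and to bound the oscillation by $\omega_u(C2^{-j})$ — but this is exactly what is encoded in the geometric properties~\ref{itm:B1}--\ref{itm:B2} of the reflected balls and already used in the proof of Lemma~\ref{lem:Pjk-est}\ref{itm:Pjk-est1}, so essentially no new work is required.
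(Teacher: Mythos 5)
Your argument is correct and follows essentially the same route as the paper: both start from $u - T_j u = \rho_j\sum_k\eta_{j,k}(u-\Pi_{j,k}u)$, exploit that $\Pi_{j,k}$ restricts to the identity on constants, invoke Lemma~\ref{lem:Pjk-est}~\ref{itm:Pjk-est1} to pass to the $L^1$-mean over $B_{j,k}^\sharp$, and conclude by uniform continuity of $u$ together with the proximity of $B_{j,k}$ and $B_{j,k}^\sharp$ furnished by~\ref{itm:B1}--\ref{itm:B2}. The only (cosmetic) divergence is that you subtract the point value $u(x)$ and use the pointwise identity $(\Pi_{j,k}(u-u(x)))(x)=(\Pi_{j,k}u)(x)-u(x)$ together with $\sum_k\eta_{j,k}=1$ to get the clean quantitative bound $\|u-T_ju\|_{C^0(\overline\Omega)}\lesssim\omega_u(C2^{-j})$ in one step, whereas the paper subtracts the mean $\mean{u}_{B_{j,k}^\sharp}$, splits by the triangle inequality into an untouched term and a projected term, and appeals to the uniform local finiteness of the cover $(B_{j,k})_k$.
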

\begin{proof}
  We have
  \begin{align*}
    u-T_j u &= \rho_j \sum_k \eta_{j,k} (u - \Pi_{j,k} u),
  \end{align*}
  where it suffices to take the sum over those~$k$ with
  $B_{j,k} \cap U_j \neq \emptyset$. Let us take one of those~$k$. We
  will show that
  $\norm{\eta_{j,k} (u- \Pi_{j,k} u)}_{L^\infty(\Omega)}$ will be
  small for large~$j$. Since the~$B_{j,k}$ are locally finite with
  respect to~$k$ (with a covering number independent of~$j$), this
  will prove the lemma.

  Since~$\opA$ maps constants to zero, the projections~$\Pi_{j,k}$ map
  constants to themselves. Let
  $\mean{u}_{B_{j,k}^\sharp} := \dashint_{B_{j,k}^\sharp}u\,dx$, then
  with Lemma~\ref{lem:Pjk-est}~\ref{itm:Pjk-est1}
  \begin{align*}
    \norm{\eta_{j,k} (u- \Pi_{j,k} u)}_{L^\infty(B_{j,k})} 
    &\leq 
      \norm{\smash{u- \mean{u}_{B_{j,k}^\sharp}}}_{L^\infty(B_{j,k})} +
      \norm{\Pi_{j,k}(u-\mean{u}_{B_{j,k}^\sharp})}_{L^\infty(B_{j,k})}
    \\
    &\lesssim 
      \norm{u- \mean{u}_{B_{j,k}^\sharp}}_{L^\infty(B_{j,k})} +
      \dashint_{B_{j,k}^\sharp} \abs{u-\mean{u}_{B_{j,k}^\sharp}}\,dx.
  \end{align*}
  Since~$u$ is uniformly continuous, the~$B_{j,k}$ and
  $B_{j,k}^\sharp$ are small and close to each other, cf. \ref{itm:B1}, we see that both
  expressions on the right-hand side are small for large~$j$ uniformly
  in~$k$. The concludes the proof.
\end{proof}
\begin{corollary}\label{cor:classical}
  Let $u \in \BVA(\Omega) \cap C^0(\overline{\Omega})$ be uniformly
  continuous. Then $\tracenew(u) = \trace(u)$.
\end{corollary}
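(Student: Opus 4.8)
The plan is to combine the approximation result of Lemma~\ref{lem:TjlimitC0} with the very definition of $\tracenew$ in~\eqref{eq:def-tracenew}, using uniqueness of limits in $L^1(\partial\Omega;\mathcal{H}^{n-1})$.

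First I would record that for $u\in\BVA(\Omega)\cap C^0(\overline{\Omega})$ the function $T_j u$ is continuous on $\overline{\Omega}$ and smooth in a neighbourhood of $\partial\Omega$: on $U_{j+1}$ we have $\rho_j\equiv 1$, so there $T_j u=\sum_k\eta_{j,k}\Pi_{j,k}u$, which is $C^\infty$ since each $\Pi_{j,k}u\in\mathscr{P}_l$ and the sum is locally finite; elsewhere $T_j u=(1-\rho_j)u+\rho_j\sum_k\eta_{j,k}\Pi_{j,k}u$ is a continuous combination of $u$ and a smooth function. Consequently the classical trace $\trace(T_j u)$ is simply the boundary restriction $(T_j u)|_{\partial\Omega}$, and likewise $\trace(u)=u|_{\partial\Omega}$ (here one may note that on the bounded set $\overline{\Omega}$ continuity already forces uniform continuity, so the hypothesis is automatic).

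Next, by Lemma~\ref{lem:TjlimitC0} we have $T_j u\to u$ in $C^0(\overline{\Omega})$, i.e.\ uniformly on $\overline{\Omega}$; restricting to $\partial\Omega$ gives $\trace(T_j u)\to\trace(u)$ uniformly on $\partial\Omega$. Since $\Omega$ is bounded with Ahlfors regular boundary (Assumption~\ref{ass:main}), $\mathcal{H}^{n-1}(\partial\Omega)<\infty$, so uniform convergence upgrades to convergence in $L^1(\partial\Omega;\mathcal{H}^{n-1})$. On the other hand, \eqref{eq:def-tracenew} defines $\tracenew(u)$ precisely as the $L^1(\partial\Omega)$-limit of the same sequence $\bigl(\trace(T_j u)\bigr)_j$. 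The $L^1$-limit of a sequence being unique, we conclude $\tracenew(u)=\trace(u)$ as elements of $L^1(\partial\Omega;\mathcal{H}^{n-1})$, which is the assertion.

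I expect no serious obstacle here: the substance has already been carried out in Lemma~\ref{lem:TjlimitC0} (the $C^0$-approximation) and in the construction of $\tracenew$ preceding~\eqref{eq:def-tracenew}. The only points requiring a moment's care are the finiteness of $\mathcal{H}^{n-1}(\partial\Omega)$, so that the uniform convergence of the traces passes to $L^1$-convergence, and the identification of the classical trace of the continuous function $T_j u$ with its boundary restriction; both are immediate under Assumption~\ref{ass:main}.
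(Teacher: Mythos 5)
Your proposal is correct and follows essentially the same route as the paper: invoke Lemma~\ref{lem:TjlimitC0} for the $C^0(\overline{\Omega})$-convergence of $T_j u$, compare with the definition~\eqref{eq:def-tracenew} of $\tracenew$, and conclude by uniqueness of the $L^1(\partial\Omega)$-limit. The only differences are cosmetic: you make explicit (and correctly) the finiteness of $\mathcal{H}^{n-1}(\partial\Omega)$ and the identification of the classical trace of $T_j u$ with its boundary restriction, while the paper additionally cites Corollary~\ref{cor:TjlimitL1BVA} for $T_j u \to u$ in $\BVA(\Omega)$, a fact the argument does not actually need once the $L^1(\partial\Omega)$-limit is built into the definition of $\tracenew$.
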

\begin{proof}
  We see from Corollary~\ref{cor:TjlimitL1BVA} and
  Lemma~\ref{lem:TjlimitC0} that $T_j u \to u$ in $\BVA(\Omega)$ and
  in~$C^0(\overline{\Omega})$. By definition of~$\tracenew(u)$, we have
  $\trace (T_j u) \to \tracenew(u)$. Since $T_j u \to u$
  in~$C^0(\overline{\Omega})$, we also have
  $\trace(T_j u) \to \trace(u)$ in~$C^0(\partial \Omega)$. The
  limits must agree in~$L^1_{\loc}(\partial \Omega)$,
  so~$\tracenew(u) = \trace(u)$.
\end{proof}
We have already seen that
$\tracenew\,:\, \BVA(\Omega) \to L^1(\partial \Omega)$ is continuous
with respect to the norm topology. We wish to use this to conclude
that~$\tracenew$ is the only extension of the classical trace
to~$\BVA(\Omega)$.  However, as smooth functions are not dense
in~$\BVA$ with respect to the norm topology, we switch to strict
convergence as in the $\bv$--case.
\begin{lemma}
  \label{lem:trace-strict}
  The trace
  operator~$\tracenew\,:\, \BVA(\Omega) \to L^1(\partial \Omega;\R^{N})$ is
  continuous with respect to the strict convergence of~$\BVA(\Omega)$.
\end{lemma}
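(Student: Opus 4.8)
The plan is to prove strict continuity by an $\varepsilon/3$ argument comparing $\tracenew$ with the classical traces $\trace(T_j u)$ of the finite-scale approximations. Suppose $u_k\strictto u$ in $\BVA(\Omega)$, i.e.\ $u_k\to u$ in $L^1(\Omega)$ and $|\A u_k|(\Omega)\to|\A u|(\Omega)$, fix $\varepsilon>0$, and for a scale $j\ge j_0$ to be chosen write
\begin{align*}
  \tracenew(u_k)-\tracenew(u)=\big(\tracenew(u_k)-\trace(T_ju_k)\big)+\trace\big(T_j(u_k-u)\big)+\big(\trace(T_ju)-\tracenew(u)\big),
\end{align*}
using linearity of $T_j$ and of the classical trace on functions smooth near $\partial\Omega$. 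The two ingredients are, first, the consequence of Lemma~\ref{lem:trace-cauchy} obtained by summing the bounds for $\|\trace(T_{l+1}v)-\trace(T_lv)\|_{L^1(\partial\Omega)}$ over $l\ge j$ and using the bounded overlap of the shells $U_{l-k_0}\setminus U_{l+k_0}$, namely
\begin{align*}
  \bignorm{\tracenew(v)-\trace(T_jv)}_{L^1(\partial\Omega)}\lesssim|\A v|(U_{j-k_0})\qquad(j\ge j_0,\ v\in\BVA(\Omega)),
\end{align*}
and, second, the fixed-scale estimate $\|\trace(T_jw)\|_{L^1(\partial\Omega)}\lesssim 2^{j}\|w\|_{L^1(\Omega)}$, proved exactly as the bound for $\trace(T_{j_0}u)$ in Lemma~\ref{lem:trace-cauchy}.

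Granting these, the middle term is harmless once $j$ is frozen, since $\|\trace(T_j(u_k-u))\|_{L^1(\partial\Omega)}\lesssim 2^{j}\|u_k-u\|_{L^1(\Omega)}\to0$ as $k\to\infty$; the third term is $\lesssim|\A u|(U_{j-k_0})$, which tends to $0$ as $j\to\infty$ because $|\A u|$ is a finite measure and $U_m\downarrow\emptyset$ in $\Omega$; and the first term is $\lesssim|\A u_k|(U_{j-k_0})$. The one genuinely non-routine point — and the only place where strict rather than merely norm convergence is used — is to make this last quantity small \emph{uniformly} in $k$: in the strict limit no $\A$-variation of the $u_k$ may concentrate at the boundary.

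To get this I would fix, for $m\in\setN$, the closed set $F_m:=\{x\in\Omega:d(x,\partial\Omega)\le2^{-m}\}$ and the open set $V_m:=\Omega\setminus F_m$, noting $U_{j-k_0}\subset F_m$ whenever $j\ge m+k_0$. Since $u_k\to u$ in $L^1(V_m)$ and $\sup_k|\A u_k|(V_m)<\infty$, the sequence converges to $u$ weakly-$*$ in $\BVA(V_m)$, so lower semicontinuity of the $\A$-variation gives $|\A u|(V_m)\le\liminf_k|\A u_k|(V_m)$; together with $|\A u_k|(\Omega)\to|\A u|(\Omega)$ this yields
\begin{align*}
  \limsup_{k\to\infty}|\A u_k|(F_m)=\lim_{k\to\infty}|\A u_k|(\Omega)-\liminf_{k\to\infty}|\A u_k|(V_m)\le|\A u|(\Omega)-|\A u|(V_m)=|\A u|(F_m).
\end{align*}
As $F_m\downarrow\emptyset$ and $|\A u|$ is finite, $|\A u|(F_m)\to0$. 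Thus one picks $m_0$ with $c\,|\A u|(F_{m_0})<\varepsilon/3$ (with $c$ the implied constant above), then $k_1$ so that $c\,|\A u_k|(F_{m_0})<\varepsilon/3$ for $k\ge k_1$, then $j\ge\max\{j_0,m_0+k_0\}$ so that $U_{j-k_0}\subset F_{m_0}$, and finally $k_2$ handling the middle term at the now-fixed scale $j$. For $k\ge\max\{k_1,k_2\}$ all three terms are $<\varepsilon/3$, which proves the lemma. The remainder is just bookkeeping with the covering parameters $j_0$ and $k_0$ already fixed in the construction.
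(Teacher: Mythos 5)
Your proof is correct and rests on the same core mechanism as the paper's — localize the trace to a boundary strip and then use strict convergence to forbid concentration of $\A$-variation there — but the route is organized differently. The paper first observes the algebraic identity $T_j(\rho_m v) = \rho_m T_j v$ for $j > m + k_0$, from which $\tracenew(v) = \tracenew(\rho_m v)$; it then applies the global norm estimate~\eqref{eq:est-tracenew} to $\rho_m(u_k-u)$ and unwinds the product rule for $\A(\rho_m(u_k-u))$. You instead compare $\tracenew$ with the finite-scale classical trace $\trace(T_j\,\cdot\,)$, controlling the tail $\tracenew(v)-\trace(T_j v)$ by summing Lemma~\ref{lem:trace-cauchy} over $l\ge j$ (using the bounded overlap of the shells $U_{l-k_0}\setminus U_{l+k_0}$) and noting that $\trace(T_j\,\cdot\,)$ is $L^1$-bounded at fixed scale. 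The practical payoffs are minor but real: your decomposition avoids the product-rule computation entirely, and — more importantly — your treatment of the "no concentration" step via the closed sets $F_m$ and open sets $V_m$ (getting $\limsup_k|\A u_k|(F_m)\le|\A u|(F_m)$ from lower semicontinuity on $V_m$ together with convergence of total variations) is more explicit and rigorous than the paper's compressed "since $U_m$ is open, we get $\lesssim|\A u|(U_m)$", which as stated conflates the open-set (lower semicontinuity) and closed-set (upper semicontinuity) directions of weak-$*$ convergence of measures; what the paper's step really uses is exactly the argument you spelled out. One small bookkeeping remark: your fixed-scale bound $\|\trace(T_jw)\|_{L^1(\partial\Omega)}\lesssim 2^j\|w\|_{L^1(\Omega)}$ is indeed obtained verbatim from the $T_{j_0}$ estimate in Lemma~\ref{lem:trace-cauchy}, so no extra work is hidden there.
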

\begin{proof}
  Let $u,u_k\in\BVA(\Omega)$ with $u_k \strictto u$ and $m\in\mathbb N$. 
  
  It follows from the definition~\eqref{eq:def-Tj} of~$T_j$ that
  for~$j>m+k_0$, there holds for all~$v \in \BVA(\Omega)$
  \begin{align*}
    T_j (\rho_m v) = \rho_m\, T_j v.
  \end{align*}
  Indeed, $\rho_m=1$ on the $B_{j,k}$ and the $B_{j,k}^\sharp$ for
  all~$m$ that contribute to the sum in~\eqref{eq:def-Tj}.

  This implies that
  \begin{align*}
    \tracenew(v) = \lim_{j\to \infty} \trace(T_j v)
    = \lim_{j\to \infty} \trace(T_j (\rho_m v)) =
    \tracenew(\rho_m v)\qquad\text{in}\;\lebe^{1}(\partial\Omega).
  \end{align*}
  Now, for all $k\in\mathbb{N}$,
  \begin{align*}
    \norm{\tracenew(u_k-u)}_{L^1(\partial \Omega)}
    & =
      \norm{\tracenew(\rho_m(u_k-u))}_{L^1(\partial \Omega)}.
  \end{align*}
  Thus, by~\eqref{eq:est-tracenew}
  \begin{align*}
    \norm{\tracenew(u_k-u)}_{L^1(\partial \Omega)}
    &\lesssim
      \norm{\rho_m(u_k - u)}_{L^1(\Omega)}+
      \abs{\opA
      (\rho_m(u_k-u))}(\Omega)
    \\
    &\lesssim
      \norm{u_k - u}_{L^1(\Omega)}+
      \abs{\opA u_k}(U_m) +       \abs{\opA u_k}(U_m) + 2^{-m}
      \norm{u_k -u}_{L^1(U_m)}. 
  \end{align*}
  Now, let $k,l \to \infty$. Since $u_k \strictto u$ in~$\BVA(\Omega)$
  and~$U_m$ is open, we get
  \begin{align*}
    \norm{\tracenew(u_k-u)}_{L^1(\partial \Omega)}
    &\lesssim
      \abs{\opA u}(U_m).
  \end{align*}
  The right-hand side converges to zero for~$m \to \infty$. Thus
  $\tracenew(u_k) \to \tracenew(u)$ in $L^1(\partial \Omega)$ for $k
  \to \infty$.
\end{proof}

In order to proceed, we need an smooth approximation result up to the
boundary in the
area-strict topology.
\begin{lemma}
  \label{lem:approx-COmega-bar}
Let $u \in \BVA(\Omega)$. Then there
  exists~$u_j \in C^\infty(\overline{\Omega})$ with $u_j \areato u$
  in $\BVA(\Omega)$.  
\end{lemma}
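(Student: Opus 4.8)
\emph{Plan.} The plan is to mimic the classical construction of smooth approximations up to the boundary for $\setBV$ and $\bd$ (see \cite[Sec.~5.2]{EvGa}, \cite[Thm.~1.17]{Giusti1}, \cite{AmbFusPal00}), replacing the scalar Leibniz rule by the product rule~\eqref{eq:productrule} and using that \emph{translations and mollifications commute with}~$\A$. As in the proof of Theorem~\ref{thm:smoothapprox} it suffices to produce $u_j \in C^\infty(\overline{\Omega})$ with $u_j \strictto u$ in $\bvA(\Omega)$ (that is, $u_j \to u$ in $L^1(\Omega)$ and $|\A u_j|(\Omega) \to |\A u|(\Omega)$); the passage from strict to area-strict convergence is then identical to \cite[Lemma~B.2]{Bil03}.

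First I would use the Lipschitz structure of $\partial\Omega$ to fix a uniform cone property: finitely many open sets $V_1,\dots,V_M$ covering $\partial\Omega$, slightly smaller open sets $V_i'$ with $\overline{V_i'}\subset V_i$, unit vectors $\theta_1,\dots,\theta_M$ and $t_0>0$ such that $x+t\theta_i\in\Omega$ whenever $x\in\overline{\Omega}\cap V_i'$ and $0<t\le t_0$. I add an open set $V_0$ with $\overline{V_0}\subset\Omega$ so that $\overline{\Omega}\subset\bigcup_{i=0}^M V_i'$, and choose a partition of unity $(\zeta_i)_{i=0}^M$ with $\zeta_i$ supported in $V_i'$ and $\sum_{i=0}^M\zeta_i\equiv1$ on a neighbourhood of $\overline{\Omega}$. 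On $V_0'$ the interior approximation of Theorem~\ref{thm:smoothapprox} provides a global smooth $w_0=w_{0,\delta}$ with $w_{0,\delta}\to u$ in $L^1$ and $|\A w_{0,\delta}|(\Omega)\to|\A u|(\Omega)$, so the real work is in the boundary charts.

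For $i\ge1$ and $0<t\le t_0$ set $v_{i,t}(x):=u(x+t\theta_i)$, which lies in $\bvA$ on the translated domain $\Omega-t\theta_i$, an open set containing $\overline{\Omega}\cap V_i'$. Since translation commutes with $\A$, the measure $\A v_{i,t}$ is the translate $(\A u)(\cdot+t\theta_i)$ of $\A u$, so $|\A v_{i,t}|(E)=|\A u|(E+t\theta_i)$ for every Borel set $E$; no structural hypothesis on $\A$ is used here. Now I mollify at a scale $\epsilon=\epsilon(i,t)\ll t$ to obtain $w_i:=\eta_\epsilon*v_{i,t}\in C^\infty$ on a neighbourhood of $\overline{\Omega}\cap\spt\zeta_i$, with $\A w_i=\eta_\epsilon*(\A v_{i,t})$; in particular $w_i\to u$ in $L^1(\Omega\cap V_i')$ as $t,\epsilon\to0$, and $|\A w_i|(E)\le|\A u|\big(E+t\theta_i+B(0,\epsilon)\big)$ for all Borel $E$.

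Finally I would set $u_\delta:=\sum_{i=0}^M\zeta_i w_i$ with all parameters tuned to one scale $\delta\to0$; this is smooth on $\overline{\Omega}$ and $u_\delta\to u$ in $L^1(\Omega)$. By~\eqref{eq:productrule} and $\sum_i\nabla\zeta_i\equiv0$ near $\overline{\Omega}$,
\begin{align*}
  \A u_\delta \;=\; \sum_{i=0}^M \zeta_i\,\A w_i \;+\; \sum_{i=0}^M (w_i-u)\otimes_\A\nabla\zeta_i,
\end{align*}
and the second sum tends to $0$ in $L^1(\Omega)$ since $w_i\to u$ in $L^1$ and $\nabla\zeta_i\in L^\infty$. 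Together with the weak-$*$ lower semicontinuity of $|\A\cdot|$ it then remains to show $\limsup_{\delta\to0}|\A u_\delta|(\Omega)\le|\A u|(\Omega)$. For this I would test $\sum_i\zeta_i\A w_i$ against $\varphi\in C_c^1(\Omega;\RK)$ with $|\varphi|\le1$, push each $\zeta_i\varphi$ through the convolution and the translation in its chart, and recombine: using that each $|\A w_i|$ converges weakly-$*$ to $|\A u|$ near $\spt\zeta_i$ (weak-$*$ convergence of the vector measures plus convergence of masses, after a generic choice of the $V_i'$ so that $|\A u|$ does not charge $\partial V_i'\cap\Omega$, and using that $|\A u|(U_m)\to0$ since $\A u$ is a measure on the open set $\Omega$), one gets $\limsup_\delta\int_\Omega\langle\sum_i\zeta_i\A w_i,\varphi\rangle\le\limsup_\delta\sum_i\int_\Omega\zeta_i\,\dif|\A w_i|=\sum_i\int_\Omega\zeta_i\,\dif|\A u|=|\A u|(\Omega)$. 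Combined with lower semicontinuity and the $L^1$ convergence this gives $u_\delta\strictto u$, whence $u_\delta\areato u$ by \cite[Lemma~B.2]{Bil03}. The main obstacle is precisely this last inequality: the finitely many overlapping charts, together with the inward translation and the mollification, could a priori inflate the total $\A$-variation, and one must carefully check that testing against $\varphi$ with $|\varphi|\le1$ and exploiting $\sum_i\zeta_i=1$ recombines the local pieces into a single integral against $|\A u|$ without over-counting its mass, in particular that no mass escapes onto $\partial\Omega$ in the limit.
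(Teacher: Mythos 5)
Your proof takes a genuinely different route from the paper's. The paper reuses the operators $T_j u$ from the trace construction in Section~\ref{sec:trace-operator}: since $T_j u$ replaces $u$ near $\partial\Omega$ by the local polynomial projections $\Pi_{j,k}u$, which lie in the finite--dimensional nullspace $N(\opA)\subset\mathscr{P}_l$, the function $T_j u$ is already $C^\infty$ on $\overline{U_{j+1}}$, and so it suffices to mollify only in the interior via $u_{j,\epsilon}:=\rho_{j+1}T_ju+((1-\rho_{j+1})T_ju)*\eta_\epsilon$ and combine this with the strong convergence $T_ju\to u$ in $\BVA(\Omega)$ from Corollary~\ref{cor:TjlimitL1BVA}. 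This is very economical within the paper, but it rests crucially on $\setC$-ellipticity (the standing hypothesis of Sections~\ref{sec:projpoinc}--\ref{sec:traces}), since the whole $T_j$ construction needs the projections onto a finite--dimensional $N(\opA)$. Your argument, by contrast, is the classical inward-translation-plus-mollification scheme familiar from $\setBV$ and $\bd$, transported to general $\opA$ via the product rule~\eqref{eq:productrule} and the commutation of $\opA$ with translations and convolutions; it makes no use of the nullspace and in fact would establish the lemma for \emph{any} homogeneous first-order constant-coefficient operator $\opA$, which is strictly more than the paper proves. The trade-off is that the final $\limsup$ estimate is the delicate step and is still only sketched: after writing $\A u_\delta = \sum_i\zeta_i\A w_i + \sum_i(w_i-u)\otimes_\A\nabla\zeta_i$ and discarding the second sum in $L^1$, the chain $\sum_i\int_\Omega\zeta_i\,\dif|\A w_i|\to\sum_i\int_\Omega\zeta_i\,\dif|\A u|=|\A u|(\Omega)$ requires $|\A w_i|\wstar|\A u|$ as \emph{positive} measures on a neighbourhood of $\spt\zeta_i\cap\Omega$, which in turn needs both the inward-translation bound $|\A w_i|(E)\le|\A u|(E+t\theta_i+B(0,\epsilon))$ together with outer regularity of $|\A u|$ to get $\limsup$ of masses, and a generic choice of the $V_i'$ so that $|\A u|(\Omega\cap\partial V_i')=0$. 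You flag both ingredients, so the outline is sound, but in a polished write-up the convergence of total variations should be isolated as its own step rather than folded into one display, since this is exactly where the ``overcounting'' worry you raise is resolved.
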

\begin{proof}
  For $j \geq j_0$ consider $T_j u$. Then $T_j u$ is $C^\infty$ in
  $\overline{U_{j+1}}$. Indeed, for all~$x \in U_{j+1}$ we have
  \begin{align*}
    (T_j u)(x) &= \sum_k \eta_{j,k} \Pi_{j,k} u.
  \end{align*}
  For each~$k$ with $B_{j,k} \cap U_{j+1} \neq 0$ we have 
  \begin{align*}
    \norm{\nabla (\eta_{j,k} \Pi_{j,k} u)}_\infty 
    &\lesssim \norm{\nabla \eta_{j,k}}_{L^\infty(B_{j,k})}
      \norm{\Pi_{j,k} u}_{L^\infty(B_{j,k})} + 
      \norm{\nabla \Pi_{j,k} u}_{L^\infty(B_{j,k})}.
  \end{align*}
  Using inverse estimates for polynomials and  Lemma~\ref{lem:Pjk-est}
  we get
  \begin{align*}
    \norm{\nabla (\eta_{j,k} \Pi_{j,k} u)}_\infty 
    &\lesssim \ell(B_{j,k}) \abs{B_{j,k}} 
      \norm{\Pi_{j,k} u}_{L^1(B_{j,k})} \lesssim 2^{j(n+1)}
      \norm{u}_{L^1(B^\sharp_{j,k})}.
  \end{align*}
  Hence, $T_j u$ is uniformly continuous on~$\overline{U}_{j+1}$.

  Now, let $\eta_\epsilon \colon\R^{n}\to\R$ be an standard mollifier
  (even and non-negative). It is well known that
  $u_{j,\epsilon} := \rho_{j+1} T_j u + ((1-\rho_{j+1}) T_j u) *
  \eta_\epsilon$
  converges to $T_j u$ as $\epsilon \searrow 0$ in $L^1(\Omega)$ as
  well as in the area-strict sense. Hence, we can
  find~$\epsilon_j$ such that
  \begin{align*}
    \norm{u_{j,\epsilon_j} - T_j u}_{L^1(\Omega)} &\leq 2^{-j},
    \\
    \bigabs{ \abs{\opA(T_j u)}(\Omega) - \abs{\opA(u_{j,\epsilon_j})}(\Omega)}
                                                  &\leq 2^{-j}.
  \end{align*}
  Moreover, recall that $T_ju \to u$ strongly in $\BVA(\Omega)$. This
  implies that $u_j := u_{j,\epsilon_j}$ has the desired
  property. This proves the strict convergence. The area-strict
  convergence follows by the same steps.
\end{proof}
As a consequence of Lemma~\ref{lem:trace-strict}
and Lemma~\ref{lem:approx-COmega-bar} we immediately obtain the following corollary. 
\begin{corollary}\label{cor:unique}
  The trace~$\tracenew\,:\, \BVA(\Omega)\to\lebe^{1}(\partial\Omega;\mathcal{H}^{n-1})$ is the unique
  strictly-continuous extension of the classical trace
  on~$\BVA(\Omega) \cap C^0(\overline{\Omega})$.
\end{corollary}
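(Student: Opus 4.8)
The plan is to combine three facts already established in the excerpt: by Corollary~\ref{cor:classical}, $\tracenew$ agrees with the classical trace on $\BVA(\Omega)\cap C^0(\overline{\Omega})$; by Lemma~\ref{lem:trace-strict}, $\tracenew$ is continuous along strictly convergent sequences in $\BVA(\Omega)$; and by Lemma~\ref{lem:approx-COmega-bar}, every $u\in\BVA(\Omega)$ is the area-strict limit of a sequence in $C^\infty(\overline{\Omega})$. The first two already say that $\tracenew$ \emph{is} a strictly-continuous extension of the classical trace, so only its uniqueness has to be argued, and for that the density statement is the key input.

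First I would observe that area-strict convergence implies strict convergence, so that Lemma~\ref{lem:approx-COmega-bar} provides, for each $u\in\BVA(\Omega)$, a sequence $(u_j)\subset C^\infty(\overline{\Omega})$ with $u_j\strictto u$ in $\BVA(\Omega)$; this elementary implication follows from $\sqrt{1+|\cdot|^2}\geq\max\{1,|\cdot|\}$ together with the lower semicontinuity of the total $\A$-variation, which force $|\A u_j|(\Omega)\to|\A u|(\Omega)$ along an area-strictly convergent sequence.

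Then, to prove uniqueness, I would take an arbitrary map $S\colon\BVA(\Omega)\to L^1(\partial\Omega;\mathcal{H}^{n-1})$ that is continuous with respect to strict convergence and agrees with the classical trace on $\BVA(\Omega)\cap C^0(\overline{\Omega})$, fix $u\in\BVA(\Omega)$, and choose $(u_j)\subset C^\infty(\overline{\Omega})$ with $u_j\strictto u$ as above. Since each $u_j$ lies in $\BVA(\Omega)\cap C^0(\overline{\Omega})$, both $S$ and $\tracenew$ restrict to the classical trace there, so $S(u_j)=\trace(u_j)=\tracenew(u_j)$ in $L^1(\partial\Omega)$ for every $j$. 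Passing to the limit $j\to\infty$ and invoking the strict continuity of $S$ (by hypothesis) and of $\tracenew$ (Lemma~\ref{lem:trace-strict}) then yields $S(u)=\lim_{j\to\infty}S(u_j)=\lim_{j\to\infty}\tracenew(u_j)=\tracenew(u)$; since $u$ was arbitrary, $S=\tracenew$.

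I do not expect any genuine obstacle here: this is the standard principle that a continuous map is determined by its values on a dense subspace. The only subtlety worth flagging is the mismatch of topologies — Lemma~\ref{lem:approx-COmega-bar} is stated for the area-strict topology while Lemma~\ref{lem:trace-strict} concerns the weaker strict topology — so one must pass through the implication ``area-strict $\Rightarrow$ strict'' before chaining the two results.
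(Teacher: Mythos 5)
Your proof is correct in structure and coincides with the paper's own (very terse) argument, which simply cites Lemma~\ref{lem:trace-strict} and Lemma~\ref{lem:approx-COmega-bar}; you have filled in the density argument behind that citation and correctly flagged the one subtlety, namely that Lemma~\ref{lem:approx-COmega-bar} gives density in the area-strict topology whereas Lemma~\ref{lem:trace-strict} gives continuity for the strict topology.

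The justification you offer for ``area-strict $\Rightarrow$ strict'' does not, however, quite work as stated. The pointwise inequality $\sqrt{1+t^2}\geq\max\{1,t\}$ combined with lower semicontinuity of $v\mapsto|\A v|(\Omega)$ yields only the one-sided bound $|\A u|(\Omega)\leq\liminf_j|\A u_j|(\Omega)$; it does not produce the matching bound $\limsup_j|\A u_j|(\Omega)\leq|\A u|(\Omega)$. (Note also that $z\mapsto\sqrt{1+|z|^2}-|z|$ is not convex, so one cannot run a lower semicontinuity argument on the difference of the two functionals.) The correct mechanism is Reshetnyak's continuity theorem: the area functional of $v$ equals the total variation $|(\mathscr{L}^n\mres\Omega,\,\A v)|(\Omega)$ of the $\R^{1+K}$-valued measure $(\mathscr{L}^n\mres\Omega,\,\A v)$, so area-strict convergence together with $u_j\to u$ in $L^1(\Omega)$ is exactly strict convergence of these augmented measures, and applying Reshetnyak's theorem to the continuous, positively $1$-homogeneous integrand $(t,z)\mapsto|z|$ on $\R\times\RK$ yields $|\A u_j|(\Omega)\to|\A u|(\Omega)$. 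Alternatively, and more economically here, the proof of Lemma~\ref{lem:approx-COmega-bar} already establishes strict convergence of the constructed sequence before upgrading to area-strict convergence, so you may simply use that sequence and bypass the general implication. Either way, the rest of your argument and its conclusion stand.
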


Due to the above results it is not anymore necessary to distinguish
the classical trace and our new trace. We collect our results proven so far in the following theorem. 
\begin{theorem}\label{thm:main1b}
  Let $\A$ be $\setC$-elliptic and let $\Omega$ be an NTA domain with Ahlfors regular boundary (see Assumption~\ref{ass:main}). Then there exists a trace
  operator~$\trace\,:\, \BVA(\Omega) \to L^1(\partial \Omega,
  \mathcal{H}^{n-1})$ such that the following holds:
  \begin{enumerate}
  \item \label{itm:main1-1} $\trace(u)$ conincides with the classical
    trace for all $u \in \BVA(\Omega) \cap C^0(\Omega)$.
  \item \label{itm:main1-2} $\trace(u)$ is the unique
    strictly-continuous extension of the classical trace
    on~$\BVA(\Omega) \cap C^0(\overline{\Omega})$.
  \item \label{itm:main1-3}
    $\trace(\WA(\Omega))=\trace(\BVA(\Omega))=L^1(\partial \Omega,
    \mathcal{H}^{n-1})$.
  \end{enumerate}
\end{theorem}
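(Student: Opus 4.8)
Parts \ref{itm:main1-1} and \ref{itm:main1-2} are already in hand and require no further argument: the operator $\trace := \tracenew$ was constructed in~\eqref{eq:def-tracenew}, shown to be linear and bounded from $\BVA(\Omega)$ to $L^1(\partial\Omega;\mathcal{H}^{n-1})$ by~\eqref{eq:est-tracenew}, identified with the classical trace on $\BVA(\Omega)\cap C^0(\overline{\Omega})$ in Corollary~\ref{cor:classical}, and characterised as the unique strictly continuous extension thereof in Corollary~\ref{cor:unique}. So the plan reduces to proving the surjectivity statement \ref{itm:main1-3}.

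First I would record the trivial inclusions $\trace(\WA(\Omega)) \subseteq \trace(\BVA(\Omega)) \subseteq L^1(\partial\Omega;\mathcal{H}^{n-1})$, using $\WA(\Omega)\subseteq\BVA(\Omega)$ and~\eqref{eq:est-tracenew}. It then remains to show $L^1(\partial\Omega;\mathcal{H}^{n-1}) \subseteq \trace(\WA(\Omega))$, i.e.\ to exhibit for each datum $g$ a preimage in $\WA(\Omega)$. The idea is to import this from first-order Sobolev trace theory. Since $\A$ is a constant-coefficient first-order operator, $\norm{\A v}_{L^1(\Omega)} \lesssim \norm{\nabla v}_{L^1(\Omega)}$, so $W^{1,1}(\Omega)$ embeds continuously into $\WA(\Omega)$; it therefore suffices to produce $u \in W^{1,1}(\Omega)$ with $\trace(u) = g$. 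Because $\Omega$ satisfies Assumption~\ref{ass:main} it is, in particular, a uniform (i.e.\ $(\varepsilon,\delta)$) domain, hence a $W^{1,1}$-extension domain; combined with the Ahlfors $(n-1)$-regularity of $\partial\Omega$, the classical trace--extension theory (Gagliardo for Lipschitz graphs, and its counterpart in the borderline case $p=1$ for extension domains with Ahlfors regular boundary) furnishes a bounded linear operator $\mathrm{Ext}\colon L^1(\partial\Omega;\mathcal{H}^{n-1}) \to W^{1,1}(\Omega)$ whose composition with the classical $W^{1,1}$-trace is the identity. Setting $u := \mathrm{Ext}(g)$ then yields $u \in \WA(\Omega)$.

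What remains is to check that $\trace$, restricted to $W^{1,1}(\Omega)$, coincides with the classical $W^{1,1}$-trace --- only then does $\trace(u) = g$ follow. This is a soft density argument: $C^\infty(\overline{\Omega})$ is dense in $W^{1,1}(\Omega)$ for the $W^{1,1}$-norm (again via extend--mollify--restrict, using that $\Omega$ is an extension domain); on such functions both traces are the pointwise boundary restriction by Corollary~\ref{cor:classical}; the classical $W^{1,1}$-trace is continuous for the $W^{1,1}$-norm while $\trace=\tracenew$ is continuous for the weaker strict topology by Lemma~\ref{lem:trace-strict}; and $W^{1,1}$-norm convergence implies $\BVA$-norm, hence strict, convergence. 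Passing to the limit shows the two operators agree on all of $W^{1,1}(\Omega)$, whence $\trace(\WA(\Omega)) = \trace(\BVA(\Omega)) = L^1(\partial\Omega;\mathcal{H}^{n-1})$, as claimed.

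The only genuinely delicate point is the availability of the trace--extension pair for $W^{1,1}$ in the full generality of Assumption~\ref{ass:main}: for Lipschitz graph domains this is completely classical, while for NTA domains with Ahlfors regular boundary one must invoke (or reprove) the corresponding borderline trace theorem on $(n-1)$-Ahlfors regular sets. Everything else --- the inclusions, the embedding $W^{1,1}\hookrightarrow\WA$, and the matching of the two traces --- is routine and uses only results already established above.
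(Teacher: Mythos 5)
Your proposal takes essentially the same route as the paper: parts (a) and (b) are handed off to Corollaries~\ref{cor:classical} and~\ref{cor:unique}, and part (c) is reduced to the inclusion $W^{1,1}(\Omega)\subset\WA(\Omega)$ together with the classical surjectivity $\trace(W^{1,1}(\Omega))=L^{1}(\partial\Omega;\mathcal{H}^{n-1})$. The paper's own proof is a three-line version of precisely this argument.

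What you add is genuinely worth noting, because the paper compresses two nontrivial steps. First, the paper tacitly identifies the newly constructed operator $\tracenew$ with the classical $W^{1,1}$-trace on $W^{1,1}(\Omega)$; your density-plus-continuity argument (approximate in $W^{1,1}$-norm by $C^\infty(\overline{\Omega})$ functions, observe that $W^{1,1}$-convergence implies $\BVA$-strict convergence, and invoke Lemma~\ref{lem:trace-strict} and Corollary~\ref{cor:classical}) is exactly the missing justification, and it is correct. Second, you correctly single out the one place where the argument is not purely soft: the assertion $\trace(W^{1,1}(\Omega))=L^{1}(\partial\Omega;\mathcal{H}^{n-1})$ is classical (Gagliardo) for Lipschitz graph domains, but Theorem~\ref{thm:main1b} is stated under Assumption~\ref{ass:main}, i.e.\ for NTA domains with Ahlfors $(n-1)$-regular boundary, where this borderline $p=1$ trace--extension theorem requires a separate citation or proof (the Jonsson--Wallin circle of ideas for $d$-sets). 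The paper asserts the surjectivity without qualification; your caveat identifies a real point that the paper leaves implicit rather than a flaw in your own argument. Aside from that, your proof is complete and matches the paper's.
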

\begin{proof}
  The existence of $\trace$ is shown in Lemma~\ref{lem:trace-strict}.
Part a) follows from Corollary~\ref{cor:classical}, whereas b) is a consequence of Corollary~\ref{cor:unique}.
  Finally, the third part is a consequence of the fact that
  $\trace(\sobo^{1,1}(\Omega;\R^{N})=\lebe^{1}(\partial\Omega;\R^{N})$
  and $\sobo^{1,1}(\Omega;\R^{N})\subset\sobo^{\A,1}(\Omega)$. In
  particular, the sufficiency part of Theorem~\ref{thm:main1} is
  complete.
\end{proof}

\subsection{Necessity of $\setC$-ellipticity}\label{sec:necessity}

In this section we show that it is not possible to define an
$L^1$-trace of $\bvA$--functions if the operator $\A$ is not $\setC$-elliptic. As such, we extend the observation of Fuchs and Repin \cite{FuchsRepin} that $\mathbb{D}\ni z\mapsto 1/(z-1)\in\mathbb{C}$ is holomorphic and belongs to $\lebe^{1}(\mathbb{D};\mathbb{C})$ but does not belong to $\lebe^{1}(\partial\mathbb{D};\mathbb{C})$ (cp.~Example~\ref{ex:opexamp}(c)). 


\begin{theorem}[Without a Trace]
  \label{thm:no-trace}
  Suppose that~$\opA$ is not $\setC$-elliptic.  Let $B$ denote the
  unit ball of~$\Rn$. Then there exists a
  vector~$\xi_1 \in \Rn \setminus \set{0}$, such that for the half
  ball $B^{+}:=\{x\in B\colon\;\langle \xi_1,x\rangle > 0\}$ and the
  hyperplane $\mathfrak{H} := \set{x \in \Rn\,:\, \skp{\xi_1}{x} =0}$
  there exists a function $u \in \WA(B^{+}) \cap C^\infty(B^{+})$
  such that $u \notin L^1(\mathfrak{H} \cap B, \mathcal{H}^{n-1})$.
\end{theorem}
\begin{proof}
  We begin with the case that~$\opA$ is not $\setR$-elliptic.  Let us
  define $f(x_1,x_2) := (\abs{x_1} + \abs{x_2}^2)^{-\frac 34}$. The
  crucial observation now is that $f,\partial_2 f \in L^1(B)$.
  However, $f \notin L^1(\set{x_1=0}|_B, \mathcal{H}^{n-1})$. We have
  to adapt this example to our situation. Since~$\opA$ is not~$\setR$
  elliptic, there exists~$\xi_1\in \Rn \setminus \set{0}$ and
  $\eta_1 \in \RN \setminus \set{0}$ with $\opA[\xi_1]\eta_1 = 0$. We
  choose~$\xi_2,\dots, \xi_n$ such that $\xi_1,\dots, \xi_n$ is a
  basis. Now, define $\tau\,:\, \Rn \to \setR^2$ and
  $\sigma \,:\, \setR \to \RN$ by
  $\tau(x) := (\langle\xi_1,x\rangle,\langle\xi_2,x\rangle)$ and
  $\sigma(z) := z\,\eta_1$.  Moreover, we define
  $h_f\,:\, \Rn \to \RN$ by $h_f := \sigma \circ f \circ \tau$.  Then
  we obtain
  $(\opA h_f)(x) = \sum_{j=1}^2 \opA[\xi_j] \eta_1 (\partial_j
  f)(\tau(x))$ (compare~\eqref{eq:FDNchar1}).
  Since $\opA[\xi_1]\eta_1 = 0$, this simplifies to
  $(\opA h_f)(x) = \opA[\xi_2] \eta_1 (\partial_2 f)(\tau(x))$.  We
  choose the hyperplace~$\frH := \set{x\,:\, \skp{\xi_1}{x}=0}$. It
  follows from $f,\partial_2 f \in L^1(B)$ and
  $f \notin L^1(\set{x_1=0}|_B, \mathcal{H}^{n-1})$ that
  $u, \opA u \in L^1(B)$ and so in particular
  $u, \opA u\in\lebe^{1}(B^{+})$ with
  $B^{+}:=\{x\in B\colon\;\langle \xi_{1},x\rangle>0\}$ but
  $u \notin L^1(\frH \cap B, \mathcal{H}^{n-1})$. This concludes the
  proof in the case that~$\opA$ is not $\setR$-elliptic.
  
  Assume now that $\opA$ is $\setR$-elliptic but
  not~$\setC$-elliptic. Then as in Lemma~\ref{lem:FDNchar} there exist
  $\xi_1, \xi_2 \in \Rn$, resp. and $\eta_1, \eta_2 \in \Rn$, which
  are linearly independent such that
  $\opA[\xi_1 + i x_2](\eta_1 +i \eta_2)=0$. Define
  $f\,:\, \setC \to \setC$ by $f(z) := \frac 1z$. Then
  $f \in L^1(B_1)$ with $B_1 := \set{\abs{z}<1}$ but
  $f \notin L^1(\set{\Re(z)=0}|_{B_1}, \mathcal{H}^{n-1})$. As in
  Lemma~\ref{lem:FDNchar} we define $\tau\,:\, \Rn \to \setC$ and
  $\sigma\,:\, \setC \to \RN$ by
  $\tau(x) := \skp{\xi}{x} = \skp{\xi_1}{x} + i \skp{\xi_2}{x}$ and
  $\sigma(z) := \Re(z) \eta_1 - \Im(z) \eta_2$. Moreover, define
  $h_f \colon \Rn\to \RN$ by $h_f:= \sigma \circ f \circ \tau$. Then
  as in Lemma~\ref{lem:FDNchar} we have $(\opA h_f)(x) =0$ in
  $\mathcal{D}'(B^{+})$ with
  $B^{+}:=\{x\in B\colon \langle x_{1},x\rangle>0\}$. It follows from
  $f \in L^1(B^{+})$ and
  $f \notin L^1(\set{\Re(z)=0}|_{B_1}, \mathcal{H}^{n-1})$ that
  $h_f \in \WA(B)$ but
  $h_f \notin L^1(\frH \cap B, \mathcal{H}^{n-1})$ with
  $\frH := \set{x\,:\,\skp{\xi_1}{x}=0}$. This concludes the proof
  if~$\opA$ is $\setR$-elliptic but not $\setC$-elliptic.
\end{proof}
\begin{remark}
  \label{rem:no-trace}
  Theorem~\ref{thm:no-trace} shows the non-existence of a trace on
  some particular boundary hyperplane. If $\Omega$ does not enjoy this simple geometry but is a bounded domain with $\hold^{\infty}$--boundary, then we choose a boundary point $x_{0}\in\partial\Omega$ such that a suitable translation of the hyperplanes $\mathfrak{H}$ from the preceding proof becomes tangent to $\partial\Omega$ at $x_{0}$. In this situation, straightening the boundary locally around $x_{0}$ and applying the preceding theorem directly yield the non--existence of boundary traces in $\lebe^{1}(\partial\Omega;\mathcal{H}^{n-1})$. We leave the details to the reader. 
\end{remark}

\subsection{Gau{\ss}--Green Formula}
\label{sec:gauss-green}

In this section we deduce the Gau{\ss}-Green formula for functions
from~$\BVA(\Omega)$ which, with Theorem~\ref{thm:main1} at our
disposal, is a direct consequence of the Gau{\ss}--Green formula for
smooth functions. Let us note that up to here, only Assumption~\ref{ass:main} is required whereas in what follows we stick to a Lipschitz assumption\footnote{In principle, this can be weakened towards more general domains, but we will not need this in the sequel.} on $\partial\Omega$.
\begin{theorem}[Gau{\ss}-Green formula]
  \label{thm:gauss-green}
Let $\Omega\subset\mathbb R^n$ be open and bounded with Lipschitz boundary.
  For all $u \in \BVA(\Omega)$ and all~$\phi \in
  C^1(\overline{\Omega};\R^{N})$ we have
  \begin{align}
    \label{eq:gauss-green}
    \int_\Omega \opA u \cdot \phi\,dx 
    &=
      - \int_\Omega u \cdot \opA^* \phi\,dx + \int_{\partial \Omega} (\trace(u)
      \otimes_{\opA} \nu) \cdot  \phi\, d\mathcal{H}^{n-1},
  \end{align}
  where $\nu$ denotes the unit outer normal of~$\Omega$.
\end{theorem}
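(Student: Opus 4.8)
The plan is to reduce the Gauß--Green formula for $\BVA(\Omega)$ to the classical Gauß--Green formula for smooth functions by approximation, using the continuity of the trace operator in the area-strict topology and the area-strict density of smooth functions up to the boundary.

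First I would recall from Lemma~\ref{lem:approx-COmega-bar} that, since $\Omega$ has Lipschitz boundary, for $u \in \BVA(\Omega)$ there exists a sequence $u_j \in C^\infty(\overline{\Omega})$ with $u_j \areato u$ in $\BVA(\Omega)$. Area-strict convergence implies strict convergence, hence $u_j \to u$ in $L^1(\Omega)$ and $|\opA u_j|(\Omega) \to |\opA u|(\Omega)$; together with $\opA u_j \weakastto \opA u$ as $\RK$--valued Radon measures, this gives in fact $\opA u_j \to \opA u$ strictly as measures. For each smooth $u_j$, the classical Gauß--Green formula (integration by parts, using the product rule~\eqref{eq:productrule} $\opA(\varphi v) = \varphi\,\opA v + v \otimes_\opA \nabla\varphi$ componentwise and the divergence theorem on the Lipschitz domain $\Omega$) yields
\begin{align*}
  \int_\Omega \opA u_j \cdot \phi\,dx
  &= - \int_\Omega u_j \cdot \opA^* \phi\,dx
     + \int_{\partial \Omega} (\trace(u_j) \otimes_{\opA} \nu) \cdot \phi\, d\mathcal{H}^{n-1},
\end{align*}
where $\trace(u_j)$ is simply the restriction $u_j|_{\partial\Omega}$.

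Next I would pass to the limit $j \to \infty$ term by term. The right-hand side volume term converges because $u_j \to u$ in $L^1(\Omega)$ and $\opA^* \phi \in C^0(\overline{\Omega})$ is bounded. The boundary term converges because, by Theorem~\ref{thm:main1b}(b) (or Lemma~\ref{lem:trace-strict}), the trace operator $\trace\colon \BVA(\Omega) \to L^1(\partial\Omega;\mathcal{H}^{n-1})$ is continuous with respect to strict convergence; hence $\trace(u_j) \to \trace(u)$ in $L^1(\partial\Omega)$, and since $\phi$ and $\nu$ are bounded on $\partial\Omega$ (with $\otimes_\opA$ continuous by Lemma~\ref{lem:upperlower}, or just bilinear), the integral over $\partial\Omega$ passes to the limit. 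For the left-hand side, $\opA u_j \to \opA u$ strictly as measures and $\phi \in C^0(\overline{\Omega})$, so $\int_\Omega \phi \cdot d(\opA u_j) \to \int_\Omega \phi \cdot d(\opA u)$; here I would note that strict convergence of measures on $\Omega$, combined with tightness (the total variations converge to the total variation of the limit, so no mass escapes), upgrades weak* convergence to testing against merely continuous bounded functions, which is exactly what is needed since $\phi$ need not have compact support in $\Omega$. Taking limits in the displayed identity then gives~\eqref{eq:gauss-green}.

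The main obstacle I anticipate is precisely this last point: the weak* convergence $\opA u_j \weakastto \opA u$ only allows testing against $C_c(\Omega)$ or $C_0(\Omega)$ functions, whereas $\phi|_{\overline{\Omega}}$ does not vanish on $\partial\Omega$. One must therefore genuinely use that the approximation is \emph{strict} (indeed area-strict), so that $|\opA u_j|(\Omega) \to |\opA u|(\Omega)$ and no total variation mass concentrates near $\partial\Omega$ in the limit; this tightness is what licenses testing against the non-compactly-supported $\phi$. An alternative and perhaps cleaner route that I would also consider is to split $\phi = \rho_m \phi + (1-\rho_m)\phi$ using the cut-offs $\rho_m$ from Section~\ref{sec:trace-operator}: apply the interior (compactly supported) integration by parts to the $\rho_m\phi$ piece via smooth approximation in the interior (Theorem~\ref{thm:smoothapprox}), handle the thin collar $U_m$ piece using the Poincaré inequalities of Section~\ref{sec:projpoinc} together with the trace estimate~\eqref{eq:est-tracenew}, and let $m \to \infty$; but the direct approximation argument above is shorter and I would present that, invoking the area-strict density lemma as the single key input.
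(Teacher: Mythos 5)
Your proposal is correct and follows essentially the same route as the paper: approximate $u$ by $u_j \in C^\infty(\overline{\Omega})$ via Lemma~\ref{lem:approx-COmega-bar}, apply the classical Gau{\ss}--Green formula to each $u_j$, and pass to the limit using $L^1$-convergence, strict continuity of the trace (Lemma~\ref{lem:trace-strict}), and strict convergence of $\opA u_j$ to handle testing against the non-compactly-supported $\phi$. The paper's proof is terser (it simply says ``Passing to the limit proves the claim''), and you correctly fill in the one nontrivial point, namely why strict convergence of the measures is needed to test against $\phi \in C^0(\overline{\Omega})$.
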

\begin{proof}
  Due to Lemma~\ref{lem:approx-COmega-bar}
  there exists a sequence $u_j \in C^\infty(\overline{\Omega})$ such
  that $u_j \strictto u$ in $\BVA(\Omega)$. Due to
  Lemma~\ref{lem:trace-strict} we also have $u_j \to u$ in
  $L^1(\partial \Omega, \mathcal{H}^{n-1})$. 
  Now,~\eqref{eq:gauss-green} is valid for each~$u_j$. Passing to the
  limit proves the claim.
\end{proof}
\begin{corollary}
  \label{cor:gluing}
  Let $\Omega \compactsubset U \subset \Rn$ such that $\Omega$ and $U$
  are open and bounded and have Lipschitz boundary. For
  $u \in \BVA(\Omega)$ and $v \in \BVA(U \setminus \Omega)$ define
  $w := \chi_\Omega u + \chi_{U \setminus \Omega} v$. Then
  $w \in \BVA(U)$ and
  \begin{align}
    \label{eq:jump}
    \opA w &= \opA u \mres_\Omega + \opA v\mres_{U
             \setminus \Omega} +
             (\trace^+(v) - \trace^-(u)) \otimes_\opA \nu
             \mathcal{H}^{n-1} \mres_{\partial \Omega},
  \end{align}
  where $\trace^+(u)$ denotes the interior trace of~$u$ and
  $\trace^-(v)$ denotes the exterior trace of~$v$ and~$\nu$ the unit outer
  normal of~$\Omega$.
\end{corollary}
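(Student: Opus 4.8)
The plan is to deduce the gluing formula from the Gauß--Green formula, Theorem~\ref{thm:gauss-green}, applied separately on $\Omega$ and on $U \setminus \overline{\Omega}$, and then to add the two identities. First I would fix a test function $\phi \in C^\infty_c(U;\R^{N})$ and compute the distributional action $\langle \opA w, \phi\rangle = -\int_U w \cdot \opA^* \phi \, dx = -\int_\Omega u \cdot \opA^* \phi \, dx - \int_{U \setminus \overline{\Omega}} v \cdot \opA^* \phi\, dx$, splitting the integral according to the definition of $w$ (the boundary $\partial\Omega$ is Lebesgue-null, so this is legitimate). To each of the two pieces I apply Theorem~\ref{thm:gauss-green}: on $\Omega$ this gives $-\int_\Omega u\cdot\opA^*\phi\,dx = \int_\Omega \opA u\cdot\phi\,dx - \int_{\partial\Omega}(\trace^-(u)\otimes_\opA\nu)\cdot\phi\,d\mathcal{H}^{n-1}$, where $\nu$ is the outer normal of $\Omega$; on $U \setminus \overline{\Omega}$ it gives a similar identity, but now the relevant part of the boundary is again $\partial\Omega$ and the outer normal of $U\setminus\overline{\Omega}$ along $\partial\Omega$ is $-\nu$, producing a term $+\int_{\partial\Omega}(\trace^+(v)\otimes_\opA\nu)\cdot\phi\,d\mathcal{H}^{n-1}$ (the contribution from $\partial U$ vanishes since $\phi$ has compact support in $U$). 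Adding the two identities, the volume terms combine to $\int_U (\opA u\mres_\Omega + \opA v\mres_{U\setminus\overline{\Omega}})\cdot\phi$, and the boundary terms combine to $\int_{\partial\Omega}\big((\trace^+(v)-\trace^-(u))\otimes_\opA\nu\big)\cdot\phi\,d\mathcal{H}^{n-1}$, which is exactly the pairing of the right-hand side of \eqref{eq:jump} against $\phi$.

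Since this identity holds for every $\phi \in C^\infty_c(U;\R^{N})$, the distribution $\opA w$ is represented by the finite $\RK$-valued Radon measure on the right-hand side of \eqref{eq:jump}; in particular $\abs{\opA w}(U) \leq \abs{\opA u}(\Omega) + \abs{\opA v}(U\setminus\overline{\Omega}) + \int_{\partial\Omega}\abs{(\trace^+(v)-\trace^-(u))\otimes_\opA\nu}\,d\mathcal{H}^{n-1} < \infty$, where finiteness of the last term uses the trace estimate \eqref{eq:est-tracenew} together with Lemma~\ref{lem:upperlower} (which bounds $\abs{\cdot\otimes_\opA\nu}$ by $\kappa_2\abs{\cdot}$). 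Combined with $w \in L^1(U;\R^{N})$, which is immediate from $u \in L^1(\Omega)$ and $v \in L^1(U\setminus\overline{\Omega})$, this shows $w \in \BVA(U)$ and establishes \eqref{eq:jump}.

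The only genuinely delicate point is the bookkeeping of orientations of the normal: one must apply Theorem~\ref{thm:gauss-green} to the two domains $\Omega$ and $U\setminus\overline{\Omega}$ with their respective outer normals, observe that these normals are opposite along the common boundary $\partial\Omega$, and check that the boundary contribution of $U\setminus\overline{\Omega}$ coming from $\partial U$ drops out because the test functions are compactly supported in $U$. (If one wished to allow $\phi \in C^1(\overline{U};\R^{N})$ rather than $\phi \in C^\infty_c(U;\R^{N})$, the $\partial U$ term would reappear but would cancel against the corresponding term in the Gauß--Green formula for $w$ on $U$; restricting to compactly supported test functions avoids this entirely and suffices to identify $\opA w$ as a measure.) Everything else is a routine consequence of Theorem~\ref{thm:gauss-green} and the linearity of the distributional pairing.
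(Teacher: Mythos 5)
Your proof is correct and follows essentially the same route as the paper: fix a compactly supported test function on $U$, split the distributional pairing over $\Omega$ and $U\setminus\overline\Omega$, apply the Gau{\ss}--Green formula (Theorem~\ref{thm:gauss-green}) on each piece with the appropriate outer normal, note the $\partial U$ contribution vanishes by compact support, and add. Your added remarks — explicitly verifying that the right-hand side of \eqref{eq:jump} has finite total variation using \eqref{eq:est-tracenew} and Lemma~\ref{lem:upperlower}, and tracking the sign flip of the normal along $\partial\Omega$ — make explicit what the paper leaves implicit, but the argument is the same.
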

\begin{proof}
  Let~$w$ be as given and let $\phi \in C^1_c(U)$. We split the
  domain~$U$ into $\Omega$ and $U \setminus \Omega$ and apply the
  Gau{\ss}-Green formula~\eqref{eq:gauss-green} first to~$U$ and then
  to $\Omega$ and $U \setminus \Omega$ separately. This yields
  \begin{align*}
 -\int_U w \cdot \opA^* \phi\,dx
    &= -\int_\Omega u \cdot \opA^* \phi\,dx
      - \int_{U\setminus \Omega} v \cdot \opA^* \phi\,dx
    \\
    &=
      \quad \int_\Omega \opA u \cdot \phi\,dx - \int_{\partial \Omega}
      (\trace^+(u)
      \otimes_{\opA} \nu) \cdot  \phi\, d\mathcal{H}^{n-1}
    \\
    &\quad +
      \int_{U \setminus \Omega} \opA v \cdot \phi\,dx + \int_{\partial \Omega}
      (\trace^+(v)
      \otimes_{\opA} \nu) \cdot  \phi\, d\mathcal{H}^{n-1}.
  \end{align*}
  This proves that $w \in \BVA(U)$ and the representation
  formula~\eqref{eq:jump}. 
\end{proof}

\subsection{Sobolev Spaces with Zero Boundary Values}
\label{sec:zero-boundary-values}

Using our trace operator, it is natural to define subspace of
functions with zero boundary values, i.e.
\begin{align*}
  \WA_0(\Omega) &:= \set{u \in \WA(\Omega)\,:\, \trace(u) =0}.
  \\
  \BVA_0(\Omega) &:= \set{u \in \BVA(\Omega)\,:\, \trace(u) =0}.
\end{align*}
However, in the context of Sobolev spaces~$\WA_0(\Omega)$ there are
two more variants to define these spaces. One by zero extension and one
by closure of~$C^\infty_c(\Omega)$. We will show below in
Theorem~\ref{thm:WA10} that all three definitions define the same spaces.

We begin with an auxiliary lemma which we need for $\WA_0(\Omega)$. For
slightly more generality we state it for $\BVA_0(\Omega)$.
\begin{lemma}
  \label{lem:BV-cut}
  Let $u \in \BVA_0(\Omega)$. Then $(1-\rho_j) u \to u$ in
  $\BVA(\Omega)$, with $\rho_j$ as in Section~\ref{sec:trace-operator}.
\end{lemma}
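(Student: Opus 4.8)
The plan is to treat the $L^1$--part and the $\opA$--variation part of the convergence separately, exploiting throughout that $(1-\rho_j)u-u=-\rho_j u$, that $\{\rho_j>0\}\subset U_j$, and that $\nabla\rho_j$ is supported in the closed boundary layer $\{d(\cdot,\partial\Omega)\le 2^{-j}\}$ with $\norm{\nabla\rho_j}_{L^\infty}\lesssim 2^j$. The $L^1$--convergence is immediate, $\norm{\rho_j u}_{L^1(\Omega)}\le\norm{u}_{L^1(U_j)}\to 0$ by absolute continuity of the integral, since $\bigcap_j U_j=\emptyset$. For the $\opA$--variation I would use the product rule \eqref{eq:productrule}, which holds for $u\in\BVA(\Omega)$ as the identity of measures $\opA(\rho_j u)=\rho_j\,\opA u+(u\otimes_\opA\nabla\rho_j)\,\mathscr{L}^{n}$ (by the very same integration-by-parts-against-test-functions computation that appears in the proof of Theorem~\ref{thm:smoothapprox}, now with the cut-off $\rho_j$ in place of $\psi_k$). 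Combined with Lemma~\ref{lem:upperlower} this gives
\begin{align*}
  \abs{\opA\big((1-\rho_j)u\big)-\opA u}(\Omega)
  = \abs{\opA(\rho_j u)}(\Omega)
  \lesssim \abs{\opA u}(U_j) + 2^j\,\norm{u}_{L^1(U_{j-1})}.
\end{align*}
The first term tends to $0$ because $\abs{\opA u}$ is a finite measure and $U_j\downarrow\emptyset$; hence everything is reduced to the Hardy--type estimate $2^m\norm{u}_{L^1(U_m)}\to 0$ as $m\to\infty$ (shifting the index by the absolute constant $1$ is harmless). This is the only place where the hypothesis $\trace(u)=0$ enters.

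To prove $2^m\norm{u}_{L^1(U_m)}\to0$ I would first extend $u$ by zero: fix a bounded Lipschitz domain $U$ with $\Omega\compactsubset U$ and set $\tilde u:=\chi_\Omega u$ on $U$. By the gluing Corollary~\ref{cor:gluing}, applied with the outer piece taken to be $0$, and using $\trace(u)=0$, the jump term in \eqref{eq:jump} vanishes, so $\tilde u\in\BVA(U)$ with $\opA\tilde u=\opA u\mres_\Omega$; in particular $\abs{\opA\tilde u}$ charges no subset of $\Rn\setminus\Omega$. Next, cover $U_m$ by the balls $B_i:=B(x_i,2\cdot2^{-m})$ where $(x_i)$ is a maximal $2^{-m}$--separated subset of $\partial\Omega$; by the Ahlfors bounds \eqref{eq:10} (lower for a packing argument, upper for the counting) the dilated balls $B(x_i,4\cdot2^{-m})$ have overlap bounded independently of $m$. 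For each $i$ the exterior corkscrew condition at $x_i$ with radius $2\cdot2^{-m}$ produces a ball $B_i'\subset B(x_i,4\cdot2^{-m})$ with $\ell(B(x_i,4\cdot2^{-m}))\eqsim\ell(B_i')$ and $B_i'\cap\Omega=\emptyset$, hence $\tilde u\equiv0$ on $B_i'$. Theorem~\ref{thm:poincare2}, whose constant depends only on the (here fixed) ratio of radii, then yields for $m$ large
\begin{align*}
  \norm{u}_{L^1(B_i\cap\Omega)}\le\norm{\tilde u}_{L^1(B(x_i,4\cdot2^{-m}))}
  \lesssim 2^{-m}\,\abs{\opA\tilde u}\big(B(x_i,4\cdot2^{-m})\big)
  = 2^{-m}\,\abs{\opA u}\big(B(x_i,4\cdot2^{-m})\cap\Omega\big).
\end{align*}
Summing over $i$, using the bounded overlap and $\bigcup_i B(x_i,4\cdot2^{-m})\cap\Omega\subset U_{m-2}$, gives $\norm{u}_{L^1(U_m)}\lesssim 2^{-m}\abs{\opA u}(U_{m-2})$, so $2^m\norm{u}_{L^1(U_m)}\lesssim\abs{\opA u}(U_{m-2})\to0$, once more because $\abs{\opA u}$ is finite and $U_{m-2}\downarrow\emptyset$. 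Together with the first paragraph this proves $(1-\rho_j)u\to u$ in $\BVA(\Omega)$.

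The main obstacle is exactly this Hardy--type bound: unlike in the $\setBV$--setting, no control of the full gradient is available, so a classical boundary Hardy inequality cannot be quoted directly. The device replacing it is that a $\BVA_0$--function extends by zero across $\partial\Omega$ without producing a singular boundary part in $\opA\tilde u$ (Corollary~\ref{cor:gluing}), which makes the ``vanishing on a sub-ball'' \Poincare{} inequality of Theorem~\ref{thm:poincare2} applicable on a boundary layer of balls supplied by the exterior corkscrew condition; the Ahlfors regularity of $\partial\Omega$ serves only to keep this covering finitely overlapping. The one routine point to verify carefully is the product rule $\opA(\rho_j u)=\rho_j\,\opA u+(u\otimes_\opA\nabla\rho_j)\,\mathscr{L}^{n}$ for $u\in\BVA(\Omega)$, which follows verbatim from the manipulation of $\opA^*(\rho_j\varphi)$ carried out in the proof of Theorem~\ref{thm:smoothapprox}.
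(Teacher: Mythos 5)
Your proposal is correct and follows essentially the same route as the paper: split into the $L^1$ part (immediate) and the $\opA$-variation part, use the product rule and the absolute continuity of $|\opA u|$ to reduce everything to the Hardy-type estimate $2^m\norm{u}_{L^1(U_m)}\to 0$, and establish that estimate by extending $u$ by zero across $\partial\Omega$ (where $\trace(u)=0$ kills the jump term in Corollary~\ref{cor:gluing}) and then applying the vanishing-on-a-subball \Poincare{} inequality of Theorem~\ref{thm:poincare2} on a boundary layer of balls, with the exterior corkscrew condition supplying the subball in the complement of $\Omega$. The only cosmetic difference is the choice of covering: the paper recycles the already constructed family $(B_{j,k})$ from Section~\ref{sec:trace-operator} (whose finite overlap is built in) while you build a fresh cover from a maximal $2^{-m}$-separated net on $\partial\Omega$ and invoke Ahlfors regularity to control the overlap; both work and deliver the same bound $\norm{u}_{L^1(U_m)}\lesssim 2^{-m}\,|\opA u|(U_{m-c})$.
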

\begin{proof}
  We can assume that~$\Omega \compactsubset U \subset \Rn$ for some open,
  bounded~$U$ with Lipschitz boundary.
  By Corollary~\ref{cor:gluing} we can extend~$u$
  on~$U\setminus \Omega$ by zero. 

  We have
  \begin{align*}
    \opA((1-\rho_j)u-u) &= - \rho_j \opA u - u \otimes_\opA \nabla \rho_j.
  \end{align*}
  Hence,
  \begin{align*}
    \abs{\opA((1-\rho_j)u-u)}(\Omega) 
    &\leq \abs{\opA u}(U_j) + c\, r_j^{-1} \norm{u}_{L^1(U_j)}. 
  \end{align*}
  We will now show that
  \begin{align*}
    r_j^{-1} \norm{u}_{L^1(U_j)} &\lesssim \abs{\opA u}(U_{j-m})
  \end{align*}
  for some~$m \in \setN$ (and sufficiently large,
  i.e.~$j+m \geq j_0$). In fact, for fixed~$j$ define
  \begin{align*}
    K_j := \set{k \,:\, B_{j,k} \cap U_j \neq \emptyset}.
  \end{align*}
  By the geometry of~$\Omega$, we can find a factor~$\lambda>0$,
  such that for each~$k \in K_j$, the enlarged ball~$\lambda B_{j,k}$
  contains some ball~$B_{j,k}'$ that is completely in~$\Rn \setminus
  \Omega$. Now, for each~$k \in K_j$, we get by
  Theorem~\ref{thm:poincare2} 
  \begin{align*}
    \norm{u}_{L^1{(B_{j,k})}} 
    &\lesssim
      \norm{u}_{L^1{(\lambda B_{j,k})}} \lesssim r_j \abs{\opA u}(\lambda
      B_{j,k}) = r_j \abs{\opA u}(\Omega \cap \lambda B_{j,k}).
  \end{align*}
  Since the $(B_{j,k})_k$ are locally finite, so are the~$(\lambda
  B_{j,k})_k$. Now, if we choose~$m \in \setN$ such that $\Omega \cap
  \lambda B_{j,k} \subset U_{j-m}$, then
  \begin{align*}
    r_j^{-1} \norm{u}_{L^1(U_j)} 
    &\lesssim \sum_{k \in K_j} 
      r_j^{-1} \norm{u}_{L^1(B_{j,k})} 
      \lesssim \sum_{k \in K_j} 
      \norm{\opA u}_{L^1(\Omega \cap \lambda B_{j,k})} 
      \lesssim\abs{\opA u}(U_{j-m}).
  \end{align*}
  Overall, we obtain
  \begin{align*}
    \abs{\opA((1-\rho_j)u-u)}(\Omega) 
    &\leq \abs{\opA u}(U_{j-m}).
  \end{align*}
  Now, $\abs{\opA u}(U_{j-m}) \to 0$, since $U_{j-m} \searrow
  \emptyset$. This proves the claim by the Poincar\'e-inequality from Theorem \ref{thm:poincare2}.
\end{proof}

\begin{theorem}[Zero Traces]
  \label{thm:WA10}
  Let $\Omega \compactsubset U \subset \Rn$ for some open, bounded~$U$
  with Lipschitz boundary and let $u \in \WA(\Omega)$. The following
  are equivalent:
  \begin{enumerate}
  \item \label{itm:WA10-trace} $u \in \WA_0(\Omega)$.
  \item \label{itm:WA10-ext} The extension~$\tilde{u}:= \chi_\Omega u$ by
    zero on~$U \setminus \Omega$ is in $\WA(U)$.
  \item \label{itm:WA10-dense} There exist
    $u_k \in C^\infty_c(\Omega)$ with $u_k \to u$ in
    $\WA(\Omega)$.
  \end{enumerate}
\end{theorem}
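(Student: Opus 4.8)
The plan is to prove the cycle \ref{itm:WA10-dense}$\Rightarrow$\ref{itm:WA10-ext}$\Rightarrow$\ref{itm:WA10-trace}$\Rightarrow$\ref{itm:WA10-dense}; the step \ref{itm:WA10-ext}$\Rightarrow$\ref{itm:WA10-trace} is the only one that genuinely uses the trace operator, and I expect it to be the crux, although with the gluing theorem at hand it is short. For \ref{itm:WA10-dense}$\Rightarrow$\ref{itm:WA10-ext} I would take $u_k\in C^\infty_0(\Omega)$ with $u_k\to u$ in $\WA(\Omega)$, write $\widetilde{(\cdot)}$ for the extension by zero from $\Omega$ to $U$, and verify $\opA\widetilde{u}=\widetilde{\opA u}$ in $\mathcal{D}'(U;\RK)$ by testing against $\phi\in C^\infty_0(U;\RK)$: since each $u_k$ is compactly supported in $\Omega$, integration by parts produces no boundary term, so $\int_U\widetilde{u}\cdot\opA^*\phi\,dx=\int_\Omega u\cdot\opA^*\phi\,dx=\lim_k\int_\Omega\opA u_k\cdot\phi\,dx=\int_\Omega\opA u\cdot\phi\,dx$. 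Hence $\opA\widetilde u$ is represented by $\widetilde{\opA u}\in L^1(U;\RK)$ and $\widetilde u\in\WA(U)$. This step is routine.

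For \ref{itm:WA10-ext}$\Rightarrow$\ref{itm:WA10-trace} the idea is to identify $\trace(u)$ with the singular part of $\opA\widetilde u$ on $\partial\Omega$. Since $\Omega\compactsubset U$ and both have Lipschitz boundary, $U\setminus\overline{\Omega}$ is a bounded Lipschitz domain, so Corollary~\ref{cor:gluing} applies with $v\equiv0$ on $U\setminus\overline{\Omega}$ and gives $\widetilde u=\chi_\Omega u\in\BVA(U)$ with $\opA\widetilde u=\opA u\mres_\Omega-\trace(u)\otimes_\opA\nu\,\mathcal{H}^{n-1}\mres_{\partial\Omega}$, $\nu$ the outer unit normal. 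By \ref{itm:WA10-ext}, $\opA\widetilde u\in L^1(U;\RK)$, hence $\opA\widetilde u\ll\mathcal{L}^n$; since $\opA u\mres_\Omega\ll\mathcal{L}^n$ while the last term is carried by the $\mathcal{L}^n$-negligible set $\partial\Omega$, uniqueness of the Lebesgue decomposition forces $\trace(u)\otimes_\opA\nu\,\mathcal{H}^{n-1}\mres_{\partial\Omega}=0$, i.e.\ $\trace(u)(x)\otimes_\opA\nu(x)=0$ for $\mathcal{H}^{n-1}$-a.e.\ $x\in\partial\Omega$. Finally, $\setC$-ellipticity implies $\setR$-ellipticity (Lemma~\ref{lem:FDN-R-elliptic}), so the lower bound of Lemma~\ref{lem:upperlower} yields $\kappa_1\abs{\trace(u)(x)}=\kappa_1\abs{\trace(u)(x)}\abs{\nu(x)}\le\abs{\trace(u)(x)\otimes_\opA\nu(x)}=0$ a.e., whence $\trace(u)=0$ and $u\in\WA_0(\Omega)$. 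The only subtle point is the non-degeneracy of $\otimes_\opA$; this is exactly where $\setR$-ellipticity (a consequence of the standing $\setC$-ellipticity) is needed, and it is why such a statement is hopeless for general operators.

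For \ref{itm:WA10-trace}$\Rightarrow$\ref{itm:WA10-dense} I would recycle the cut-offs $\rho_j$ from Section~\ref{sec:trace-operator} together with the estimates in the proof of Lemma~\ref{lem:BV-cut}. Writing $\opA((1-\rho_j)u)-\opA u=-\rho_j\,\opA u-u\otimes_\opA\nabla\rho_j$, one has $\norm{\rho_j\,\opA u}_{L^1(\Omega)}\le\norm{\opA u}_{L^1(U_j)}\to0$ by absolute continuity of the integral (here $u\in\WA$ is used), and $\norm{u\otimes_\opA\nabla\rho_j}_{L^1(\Omega)}\lesssim 2^j\norm{u}_{L^1(U_j)}\lesssim\abs{\opA u}(U_{j-m})\to0$ exactly as in Lemma~\ref{lem:BV-cut}, the chain-of-balls argument via Theorem~\ref{thm:poincare2} being legitimate because $\trace(u)=0$ lets us extend $u$ by zero across $\partial\Omega$ using Corollary~\ref{cor:gluing}. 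Combined with $\rho_j u\to0$ in $L^1(\Omega)$ this gives $(1-\rho_j)u\to u$ in the norm of $\WA(\Omega)$. Since $(1-\rho_j)u$ vanishes on $U_{j+1}$, it is supported in the compact set $\overline{\Omega\setminus U_{j+1}}\subset\Omega$, so mollifying with a sufficiently small radius produces $u_j\in C^\infty_0(\Omega)$ with $\norm{u_j-(1-\rho_j)u}_{\WA(\Omega)}\le2^{-j}$; a diagonal argument then yields the required sequence. This step is routine once Lemma~\ref{lem:BV-cut} is available.
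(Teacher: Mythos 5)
Your proof is correct and uses the same ingredients as the paper's: the gluing corollary and $\setR$-ellipticity (via Lemma~\ref{lem:upperlower}) to force the jump term to vanish in (b)$\Rightarrow$(a), and the cut-off plus mollification of Lemma~\ref{lem:BV-cut} for (a)$\Rightarrow$(c). The one organizational difference is that you close the cycle (c)$\Rightarrow$(b)$\Rightarrow$(a)$\Rightarrow$(c), replacing the paper's two short steps (a)$\Rightarrow$(b) (gluing with zero trace) and (c)$\Rightarrow$(a) (norm-continuity of $\trace$) with a single integration-by-parts argument for (c)$\Rightarrow$(b); both routes are equally quick. The only blemish is a sign slip in that step: integration by parts gives $\int_\Omega u_k\cdot\opA^*\phi\,dx = -\int_\Omega \opA u_k\cdot\phi\,dx$, but this does not affect the conclusion $\opA\widetilde u=\widetilde{\opA u}\in L^1(U)$.
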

\begin{proof}
  \ref{itm:WA10-trace} $\Rightarrow$ \ref{itm:WA10-ext}: Let
  $u \in \WA_0(\Omega)$ and let~$\tilde{u}=\chi_\Omega u$ be its zero
  extension on~$U$. Then by Corollary~\ref{cor:gluing} we have
  $\opA \tilde{u} = \opA u \mres_\Omega \in L^1(U)$, so $\tilde{u}
  \in \WA(U)$.

  \ref{itm:WA10-ext} $\Rightarrow$ \ref{itm:WA10-trace}: Let
  $\tilde{u} = \chi_\Omega u\in \WA(U)$. Then by
  Corollary~\ref{cor:gluing} we have
  $\opA \tilde{u} = \opA u \mres_\Omega + \trace^+(u) \otimes_\opA \nu
  \mathcal{H}^{n-1}\mres_{\partial \Omega}$. Since $\opA \tilde{u} \in L^1(U)$, the
  singular part must vanish, i.e. $\trace^+(u) \otimes_\opA \nu
  \mathcal{H}^{n-1}\mres_{\partial \Omega}=0$. So by $\setR$-ellipticity of $\opA$ we have $\trace^+(u)=0$
  on~$\partial \Omega$.
                      
  \ref{itm:WA10-dense} $\Rightarrow$ \ref{itm:WA10-trace}: By
  continuity of the trace operator we have $\trace(u)  = \lim_{k \to \infty}
  \trace(u_k)=0$ in $L^1(\partial \Omega)$, so $u \in \WA_0(\Omega)$.

  \ref{itm:WA10-trace} $\Rightarrow$ \ref{itm:WA10-dense}: Let
  $v_k := (1-\rho_k) u$ as in Lemma~\ref{lem:BV-cut}. Then $v_k \to u$
  in $\WA(\Omega)$. Moreover, the $v_k$ have compact support, since
  $v_k = 0$ on $U_{k+1}$.   Now, let $\eta_\epsilon \colon\R^{n}\to\R$
  be an standard mollifier with support on $B_\epsilon(0)$. Then we
  find~$\epsilon_k$ such that
  \begin{align*}
    \norm{v_k- v_k * \phi_{\epsilon_k}}_{L^1(\Omega)} +
    \norm{\opA v_k- \opA(v_k * \phi_{\epsilon_k})}_{L^1(\Omega)} &\leq 2^{-k}
  \end{align*}
  and $\support(v_k * \phi_{\epsilon_k}) \compactsubset \Omega$.  The
  sequence $u_k := v_k * \phi_{\epsilon_k}$ has the desired
  properties.
\end{proof}

\begin{proposition}[Trace--Preserving Area-Strict
  Smoothing]
  \label{prop:areastrictsmooth}
  Let $\Omega \compactsubset U \subset \Rn$ such that $\Omega$ and $U$
  are open and bounded and have Lipschitz boundary. Let $u_0 \in
  \WA(U)$. Futher let $u \in \BVA(U)$ with $u=u_0$ on $U \setminus
  \Omega$. Then  there exists $u_j \in u_0 + C^\infty_0(\Omega)$ such
  that $u_j \areato u$ in $\BVA(U)$.
\end{proposition}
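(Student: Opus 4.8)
The plan is to write $v:=u-u_0\in\BVA(U)$ — which, since $u=u_0$ on $U\setminus\Omega$, has $\spt v\subset\overline\Omega\compactsubset U$, and, since $\A u_0\in L^1(U)$, satisfies $\A^{\mathrm{ac}}u=\A u_0+\A^{\mathrm{ac}}v$ and $\A^s u=\A^s v$ — and then to produce $w_j\in C_0^\infty(\Omega)$ with $u_0+w_j\areato u=u_0+v$ in $\BVA(U)$. This is done in two moves: first translate the correction $v$ strictly into the interior of $\Omega$, then mollify.

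For the translation I use that a bounded Lipschitz domain can be covered by finitely many graph charts $W_1,\dots,W_M$ of $\partial\Omega$ in each of which $\Omega$ is the region below a Lipschitz graph, so that translating by a small multiple $tb_i$ of the corresponding inward coordinate direction $b_i$ maps $\overline\Omega\cap W_i$ into $\Omega$; I add a set $W_0\compactsubset\Omega$ with $\overline\Omega\subset\bigcup_{i=0}^M W_i$, set $b_0:=0$, and fix a partition of unity $\theta_i\in C_0^\infty(W_i)$ with $\sum_{i=0}^M\theta_i\equiv1$ near $\overline\Omega$. For small $t>0$ I define
\[
  v_t:=\sum_{i=0}^M(\theta_i v)(\,\cdot-tb_i),
\]
each summand extended by zero to $\Rn$. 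Then $\spt v_t\subset\spt\theta_0\cup\bigcup_{i\ge1}\bigl((\overline\Omega\cap\spt\theta_i)+tb_i\bigr)$ is a compact subset of $\Omega$; since each $\theta_i v\in\BVA(U)$ (product rule \eqref{eq:productrule}) and translation preserves $\BVA$, we have $v_t\in\BVA(U)$, and $\A v_t=\sum_{i=0}^M(\theta_i\,\A v+v\otimes_\A\nabla\theta_i)(\,\cdot-tb_i)$.

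Next I would verify $u_0+v_t\areato u$ as $t\downarrow0$. Continuity of translation in $L^1$ together with $\sum_i\theta_i\equiv1$ on $\spt v$ gives $v_t\to v$ in $L^1(U)$; using additionally $\sum_i\nabla\theta_i\equiv0$ on $\spt v$, the absolutely continuous parts converge in $L^1(U)$, $\A^{\mathrm{ac}}(u_0+v_t)=\A u_0+\A^{\mathrm{ac}}v_t\to\A u_0+\A^{\mathrm{ac}}v=\A^{\mathrm{ac}}u$, whence $\int_U\sqrt{1+\abs{\A^{\mathrm{ac}}(u_0+v_t)}^2}\,\dif\mathscr L^n\to\int_U\sqrt{1+\abs{\A^{\mathrm{ac}}u}^2}\,\dif\mathscr L^n$. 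For the singular parts, $\A^s v_t=\sum_i(\theta_i\,\A^s v)(\,\cdot-tb_i)\wstar\A^s v$, so $\liminf_{t\downarrow0}\abs{\A^s v_t}(U)\ge\abs{\A^s v}(U)$, while translation-invariance of the total variation gives $\limsup_{t\downarrow0}\abs{\A^s v_t}(U)\le\sum_i\int_U\theta_i\,\dif\abs{\A^s v}=\abs{\A^s v}(U)$; since $\A^s(u_0+v_t)=\A^s v_t$ and $\A^s u=\A^s v$, the singular masses converge to $\abs{\A^s u}(U)$. Together this is precisely area-strict convergence $u_0+v_t\areato u$. This bookkeeping of the absolutely continuous and singular parts under the partition of unity — structurally the $\setBV$-type computation already carried out in the proof of Theorem~\ref{thm:smoothapprox} — is the only genuinely delicate step; the geometric translation lemma is routine.

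Finally, for each small $t$ the function $v_t$ has compact support in $\Omega$, so for $\epsilon=\epsilon(t)$ small enough $v_t*\eta_\epsilon\in C_0^\infty(\Omega)$ with $\A(v_t*\eta_\epsilon)=(\A v_t)*\eta_\epsilon$; by the area-strict continuity of mollification \cite{KriRin10} (adding the fixed $L^1$-map $\A u_0$ affects neither the convergence of the absolutely continuous parts nor the recession term) we get $u_0+v_t*\eta_\epsilon\areato u_0+v_t$ as $\epsilon\downarrow0$. As $\areato$ is induced by a metric on $\BVA(U)$, a diagonal choice $t=t_j\downarrow0$ and $\epsilon=\epsilon_j$ then yields $u_j:=u_0+v_{t_j}*\eta_{\epsilon_j}\in u_0+C_0^\infty(\Omega)$ with $u_j\areato u$ in $\BVA(U)$, which is the claim. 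I expect the load-bearing part to be the convergence of the area functional in the third paragraph; the mollification and diagonalisation are standard.
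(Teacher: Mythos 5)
Your proof is correct and works out in detail exactly the sketch the paper gives: localize via a partition of unity subordinate to graph charts, split $u = u_0 + \chi_\Omega(u-u_0)$, translate the compactly supported correction $v$ inward by small vectors $tb_i$, and mollify; the paper itself only records this basic idea and delegates the details to \cite[Lemma B.2]{Bil03} and \cite[Lemma~1]{KrRiYM}. The one step you treat a little lightly — that mollification of the compactly supported $v_t$ gives area-strict convergence of $u_0 + v_t*\eta_\epsilon$ to $u_0 + v_t$ even with the fixed $L^1$ shift $\A u_0$ in the area integrand — is indeed the standard Reshetnyak-type fact (same recession $\abs{\cdot}$, $L^1$-stability of the AC part), and the paper also relies on it via \cite{Bil03}; the diagonalisation via metrizability of area-strict convergence is sound.
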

\begin{proof}
  The proof is a straightforward modification of the corresponding
  statement for $\bv$-functions, see~\cite[Lemma B.2]{Bil03} or \cite[Lemma~1]{KrRiYM}. Let us
  just explain the basic idea: The usual localization argument by a
  partition of unity reduces the question to a local Lipschitz
  graph. Then split $u$ into $u_0 + \chi_\Omega (u-u_0)$. Now the
  $\chi_\Omega (u-u_0)$ part is moved by translation slightly
  into~$\Omega$. In a second step it is mollified to get
  a~$C^\infty_c(\Omega)$ term.
\end{proof}

\section{The Dirichlet Problem on $\bvA$--Spaces}
\label{sec:variational-problems}

This final section is devoted to variational problems with linear
growth involving~$\opA u$ subject to given boundary data.


Let $\Omega \subset \Rn$ be an open, bounded set with Lip\-schitz
boundary. Our goal is to study the
functional~$\mathfrak{F}\,:\, \WA(\Omega) \to \setR$ given by
\begin{align}
  \label{eq:functional}
  \mathfrak{F}[v]:=\int_{\Omega}f(x,\A v)\dif x.
\end{align}
subject to linear growth conditions. Given a boundary datum~$u_0 \in
\WA(\Omega)$, we wish to minimise~$\mathfrak{F}$ within the Dirichlet
class $u_0 + \WA_0(\Omega)$. The existence of a minimiser together
with the precise formulation of the problem at our disposal will be
given in Theorem~\ref{thm:main2} below.

Let us define the \emph{$\opA$-rank one cone}
$\CA = \RN \otimes_\opA \Rn \subset \RK$ with $\otimes_{\A}$ as given by \eqref{eq:bilinear}.  This cone is important to
characterise the jump terms of~$\BVA$ functions as in
Corollary~\ref{cor:gluing}. Also in the product
rule~\eqref{eq:productrule}, we have
$v \otimes_\opA \nabla \phi \in \CA$ pointwise for $\phi \in C^1(\Rn)$
and $v \in C^1(\Rn; \RN)$.

By use of the Fourier transform, we see that
$\opA(u) = (\opA[\xi] \hat{u})^\vee$. Since
$\opA[\xi] \hat{u} \in \CA$ pointwise, we obtain
$\opA(u) \in \linearspan(\CA)$ pointwise. Hence, we define the
\emph{effective range of~$\opA$} as
~$\RA := \linearspan(\CA) \subset \RK$, i.e.,  $\opA u \in \RA$
pointwise.  As a consequence, we only need to require that the second argument
of~$f$ in~\eqref{eq:functional} is from ~$\RA$. We assume that
\begin{align}
  \label{eq:f-cont}
  f\,:\, \overline{\Omega} \times \RA \to \setR \qquad \text{is continuous}
\end{align}
and satisfies the following linear growth assumption
\begin{align}
  \label{eq:lineargrowth}
  c_1|z|\leq f(x,z)\leq c_2|z|+c_3
\end{align}
for all $x \in \Omega$ and~$z \in \RA$. Moreover, we require~$\opA$ to
be $\setC$-elliptic, which allows us to use the trace results of the
previous sections.

Furthermore, we assume that there exists a
modulus of continuity $\omega$ such that 
\begin{align}
  \label{eq:f-modulus}
  |f(x,A)-f(y,A)|\leq \omega(|x-y|)(1+|A|)
\end{align}
holds for all $x,y\in\overline{\Omega}$ and all $A\in\RA$. In all of what follows, we tacitly stick to these assumptions.

We say that $g\,:\, \RA \to \setR$ is \emph{$\A$--quasiconvex} if for
all $\varphi\in\sobo_{0}^{1,\infty}((0,1)^{n};\RN)$ and $A\in \RA$
there holds
\begin{align}\label{eq:AQC}
  g(A)\leq \int_{(0,1)^{n}}g(A+\A\varphi)\dif x.
\end{align}
We say that~$f\,:\, \overline{\Omega} \times \RA \to \setR$ is
$\opA$--quasiconvex if~$f(x,\cdot)$ is $\opA$-quasiconvex for
each~$x \in \overline{\Omega}$.

Let us link this notion of quasiconvexity to that of Fonseca and M\"uller~\cite[Def.~3.1]{FonMul99}. Since
$\A$ is $\setC$-elliptic, it is also $\setR$-elliptic. So
by~\cite[Proposition~4.2]{Van13},  there exists
$M\in\mathbb{N}$ and a linear,
homogeneous constant coefficient differential operator $\mathbb{L}$
with symbol mapping $\mathbb{L}[\xi]$ from $\RK$ to $\R^{M}$ that
\emph{annihilates}~$\opA$ in the sense that the corresponding symbol
complex
\begin{align}
  \label{eq:complex}
  \RN \xrightarrow{\;\;\opA[\xi]\;} \RK
  \xrightarrow{\;\;\mathbb{L}[\xi]\;} \setR^M
\end{align}
is exact for every $\xi\in\R^{n}\setminus\{0\}$. In this situation,
$\A$ is called a \emph{potential} for $\mathbb{L}$, and $\mathbb{L}$
an \emph{annihilator} for $\mathbb{A}$. Since $\opA[\xi](\R^N)$
has the same dimension for all~$\xi\neq 0$, the operator~$\mathbb{L}$
has constant rank.  Consequently, our $\A$--quasiconvexity equals
$\mathbb{L}$--quasiconvexity\footnote{In \cite{FonMul99}, first order annihilating operators are considered, and in general this is not the case in our situation (e.g., the symmetric gradient is annihilated by $\curl\,\curl$). However, the generalisation of the concept of $\mathbb{L}$--quasiconvexity extends to higher order operators $\mathbb{L}$ in the obvious manner.} of Fonseca and M\"uller~\cite{FonMul99}.
By exactness of the above symbol complex \eqref{eq:complex}, it is easy to see that the
\emph{wave cone} (or characteristic cone)
$\Lambda_{\mathbb{L}}:=\bigcup_{\xi\in\R^{n}\setminus\{0\}}\ker(\mathbb{L}[\xi])$
of $\mathbb{L}$ agrees with our $\opA$-rank one cone~$\CA$.

We define the \emph{strong recession function}
$f^\infty \,:\, \overline{\Omega} \times \RA \to \setR$ by
\begin{align}
  \label{eq:defrecession}
  f^{\infty}(x,A):=\lim_{\substack{x'\to x\\ A'\to A\\
  t\to\infty}}\frac{f(x',tA')}{t},
\end{align}
whenever the limit exists. 

Since $f$ is $\opA$-quasiconvex, satisfies
the linear growth condition~\eqref{eq:lineargrowth} and the continuity
condition~\eqref{eq:f-modulus}, Lemma~\ref{lem:f-recession} from the appendix yields that
$f^\infty$ is automatically well-defined
on~$\overline{\Omega} \times \CA$. 

As usual the Dirichlet class $u_0 + \WA_0(\Omega)$ is not large enough
to ensure the existence of minimisers for variational problems with linear
growth. Here, the passage to~$\BVA(\Omega)$ allows to access the necessary
sequential compactness. However, elements of $\BVA(\Omega)$ do not admit
control over their \emph{exterior} trace. To overcome this problem we
proceed as in~\cite{GiaModSou79} and pass to a larger superset~$U$,
i.e., let $\Omega \compactsubset U$ with $\partial U$ Lipschitz. Now,
we extend~$\mathfrak{F}$ to $\BVA(U)$ and minimise over
those~$u \in \BVA(U)$ which agree with~$u_0$ on~$U \setminus \Omega$.
For this, we further need to accomplish the following: First, we have to
extend~$f\,:\, \overline{\Omega} \times \RA \to \setR$
to~$f\,:\, \overline{U} \times \RA \to \setR$, while preserving the
structure of~$f$, see Lemma~\ref{lem:f-recession2} in the
appendix. Second, we need to extend our boundary data to~$U$, which is
always possible, since
$\trace(\WA(\Omega))= L^1(\partial \Omega, \mathcal{H}^{n-1})=
\trace(W^{1,1}(U\setminus \Omega))$
by Theorem~\ref{thm:main1b}.  In particular, we assume in the
following that~$u_0 \in \WA(U)$.

We define the functional $\overline{\mathfrak{F}}_U\,:\, \BVA(U) \to \setR$ by
\begin{align*}
  \overline{\mathfrak{F}}_U[w]:=\int_{U}f \bigg(x,\frac{\dif\A
  w}{\dif\mathscr{L}^{n}} \bigg)\dif x + \int_{U}
  f^{\infty}\bigg(x,\frac{\dif\A w}{\dif  |\A^{s}w|}\bigg)\dif
  |\A^{s}w|
\end{align*}
and the Dirichlet class
\begin{align*}
  \mathcal{D}_{u_0} &= \set{w \in \BVA(U) \,:\, w=u_0 \text{ on } U
                      \setminus \overline{\Omega}}. 
\end{align*}
Hence, our aim is to minimise~$\mathfrak{F}_U$ over $\mathcal{D}_{u_0}$. Later we will see that this minimisation can also be expressed only in
terms of~$\BVA(\Omega)$ with an additional term $f^\infty(\cdot, \trace
(u-u_0) \otimes_\opA \nu)$ which penalises the deviations from the correct boundary values, see
Theorem~\ref{thm:main2}. 


We begin with a characterisation of the extension of
$\mathfrak{F}\,:\, \WA(\Omega) \to \setR$ to $\BVA(\Omega)$. For this, recall that $\Omega\subset\R^{n}$ is a bounded Lipschitz domain and that \eqref{eq:f-cont}--\eqref{eq:AQC} are in action.
\begin{proposition}
  \label{prop:Fbar-extension}
  Let $\overline{\mathfrak{F}}\,:\, \BVA(\Omega) \to \setR$ be given by
  \begin{align*}
    \overline{\mathfrak{F}}[u]:=\int_{\Omega}f \bigg(x,\frac{\dif\A
    u}{\dif\mathscr{L}^{n}} \bigg)\dif x + \int_{\Omega}
    f^{\infty}\bigg(x,\frac{\dif\A u}{\dif  |\A^{s}u|}\bigg)\dif
    |\A^{s}u|,
  \end{align*}
  is the $\A$-area strict continuous extension of
  $\mathfrak{F}\,:\, \WA(\Omega) \to \setR$. Moreover,
  $\overline{\mathfrak{F}}[u]\,:\, \BVA(\Omega) \to \setR$ is
  sequentially weak*--lower semicontinuous on $\BVA(\Omega)$. 
\end{proposition}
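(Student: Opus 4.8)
The plan is to establish the two assertions separately: first the area-strict continuity of $\overline{\mathfrak{F}}$ as the extension of $\mathfrak{F}$ from $\WA(\Omega)$, and second the sequential weak*-lower semicontinuity on $\BVA(\Omega)$. For the area-strict continuity, the key point is that $f$ and $f^\infty$ together give exactly the integrand of the Reshetnyak-type continuity theorem for $\A$-measures. More precisely, since $f$ is continuous, satisfies the linear growth bound \eqref{eq:lineargrowth} and the modulus-of-continuity condition \eqref{eq:f-modulus}, Lemma~\ref{lem:f-recession} ensures that $f^\infty$ exists and is continuous on $\overline{\Omega}\times\CA$; hence the \emph{perspective integrand} $(x,A,t)\mapsto t\, f(x,A/t)$ for $t>0$, extended by $f^\infty$ at $t=0$, is continuous and bounded on $\overline{\Omega}\times\{(A,t):|A|^2+t^2=1\}$. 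By the area-strict continuity result of Kristensen--Rindler~\cite{KriRin10} applied to the measure pair $(\A u\,,\,\mathscr L^n)$ (identifying $\overline{\mathfrak F}[u]$ with $\int_{\overline\Omega} \overline f\big(x,\tfrac{\dif(\A u,\mathscr L^n)}{\dif|(\A u,\mathscr L^n)|}\big)\dif|(\A u,\mathscr L^n)|$ for the appropriate positively $1$-homogeneous $\overline f$), the functional $\overline{\mathfrak F}$ is continuous along $\A$-area-strictly convergent sequences. Combined with the density of $(\hold^\infty\cap\BVA)(\Omega)$ in $\BVA(\Omega)$ for the area-strict topology from Theorem~\ref{thm:smoothapprox}\ref{item:smoothapprox2}, and the fact that $\overline{\mathfrak F}$ reduces to $\mathfrak F$ on $\WA(\Omega)$ (where $\A^s u=0$), this identifies $\overline{\mathfrak F}$ uniquely as the area-strict continuous extension of $\mathfrak F$.

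For the sequential weak*-lower semicontinuity, the plan is to invoke the lower semicontinuity theorem of Arroyo-Rabasa--De Philippis--Rindler~\cite{ArrDePRin17} for $\mathbb L$-free (equivalently $\A$-) measures. As explained just before the statement, $\A$-quasiconvexity of $f(x,\cdot)$ coincides with $\mathbb L$-quasiconvexity for the constant-rank annihilator $\mathbb L$ of $\A$ obtained from~\cite[Prop.~4.2]{Van13}, and the wave cone $\Lambda_{\mathbb L}$ equals the $\A$-rank-one cone $\CA$. One checks that the hypotheses of~\cite{ArrDePRin17} are met: $f$ is continuous with linear growth \eqref{eq:lineargrowth}, has the modulus of continuity \eqref{eq:f-modulus}, is $\A$-quasiconvex, and the recession function $f^\infty$ exists on $\overline\Omega\times\CA$ by Lemma~\ref{lem:f-recession}; moreover the measures $\A u_k$ with $u_k\wstar u$ are uniformly $\A$-free and of uniformly bounded total variation, so their limit $\A u$ is $\A$-free as well. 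Here one uses that weak* convergence $u_k\wstar u$ in $\BVA(\Omega)$ entails $\A u_k\wstar\A u$ as $\RK$-valued Radon measures with $\sup_k|\A u_k|(\Omega)<\infty$. Then~\cite{ArrDePRin17} yields directly $\overline{\mathfrak F}[u]\le\liminf_{k\to\infty}\overline{\mathfrak F}[u_k]$.

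The main obstacle is technical rather than conceptual: verifying that the abstract framework of~\cite{ArrDePRin17} and~\cite{KriRin10}---which is usually phrased for $\mathbb A$-free measures with a first-order operator $\mathbb A$ of constant rank---applies to our $\A u$, where the relevant annihilator $\mathbb L$ may have order higher than one (as flagged in the footnote for $\mathcal E$ annihilated by $\curl\curl$). One must confirm that the constant-rank lower semicontinuity machinery is insensitive to the order of the annihilator, only the wave cone $\Lambda_{\mathbb L}=\CA$ and the constant-rank property entering the proofs; this is the point where some care is needed, but it follows the now-standard extension of the Fonseca--M\"uller theory to higher-order operators. A secondary, purely bookkeeping point is ensuring that the recession function used in $\overline{\mathfrak F}$ is the one appearing in both cited theorems—namely that on $\CA$ the weak and strong recession functions of an $\A$-quasiconvex integrand of linear growth coincide, which is precisely the content of Lemma~\ref{lem:f-recession} and allows us to write a single well-defined functional $\overline{\mathfrak F}$.
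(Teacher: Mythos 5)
Your treatment of the area-strict continuity contains a genuine gap. You assert that, thanks to Lemma~\ref{lem:f-recession}, ``the perspective integrand $(x,A,t)\mapsto t\, f(x,A/t)$ for $t>0$, extended by $f^\infty$ at $t=0$, is continuous and bounded on $\overline{\Omega}\times\{(A,t):|A|^2+t^2=1\}$,'' and then apply the Kristensen--Rindler area-strict continuity theorem directly. But Lemma~\ref{lem:f-recession} only yields existence (and continuity) of the strong recession function $f^\infty$ on $\overline{\Omega}\times\CA$, \emph{not} on $\overline{\Omega}\times\RA$. Since $\A$-quasiconvexity gives convexity of $f(x,\cdot)$ only along rank-one ($\CA$) directions, the limit defining $f^\infty(x,A)$ need not exist for general $A\in\RA$, and so the perspective integrand has no continuous extension to the boundary sphere $\{t=0\}$ of the compactified domain outside of $\CA$-directions. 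This is precisely the obstacle the paper flags and works around; a direct appeal to \cite[Theorem~4]{KriRin10} as you propose is not available.

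The paper's proof instead proceeds in three steps that are absent from your proposal. First, it invokes the Alibert--Bouchitt\'e result \cite[Lemma~2.3]{AliBou97} to produce an increasing approximating sequence $f_k$ of integrands belonging to the good class $E(\overline{\Omega},\RA)$ (for which the perspective function \emph{does} extend continuously), with $\sup_k f_k = f$ and $\sup_k f_k^\infty = f_\#$, the lower (liminf) recession function. Second, it applies Reshetnyak-type continuity \cite[Theorem~5]{KriRin10} to each $f_k$ and uses monotone convergence to pass to the limit in $k$, obtaining a bound involving $f_\#$ rather than $f^\infty$. Third, and crucially, it invokes the De~Philippis--Rindler generalisation of Alberti's Rank-One Theorem \cite{DePRin16} to conclude that the singular density $\frac{\dif\A u}{\dif|\A^s u|}$ lies in $\CA$ pointwise $|\A^s u|$-a.e., and then uses Lemma~\ref{lem:f-recession} once more to identify $f_\# = f^\infty$ there. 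Applying the same argument to $-f$ upgrades lower semicontinuity to full continuity. None of these ingredients appear in your first paragraph, and without them (in particular without the Rank-One Theorem, which also ensures the integral defining $\overline{\mathfrak{F}}$ is meaningful in the first place) the argument does not close. Your second paragraph, on sequential weak*-lower semicontinuity via \cite{ArrDePRin17}, is essentially identical to the paper's and is fine, as is your correct observation that the constant-rank theory must be used in the higher-order-annihilator form.
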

\begin{proof}
  We begin with the $\A$-area strict continuity
  of~$\overline{\mathfrak{F}}\,:\, \BVA(\Omega) \to
  \setR$. If~$f^\infty$ existed on all of~$\overline{\Omega}
  \times \RA$, we could just use~\cite[Theorem~4]{KriRin10}. However, 
  we can only rely on the existence of~$f^\infty$ on
  $\overline{\Omega} \times \CA$ due to Lemma~\ref{lem:f-recession}
  from the appendix. The following steps show how to overcome this
  technical issue and hence how the argument of \cite[Theorem~4]{KriRin10} can be made work. 

  Let us denote by $E(\overline{\Omega},\RA)$ those
  functions~$g\,:\, \overline{\Omega} \times \RA \to \setR$ such that
  $(x,\xi)\mapsto (1-|\xi|)g(x,(1-|\xi|)^{-1}\xi)$ has a continuous
  extension to $\overline{\Omega\times\mathbb{B}_{K}}$; here,
  $\mathbb{B}_{K}$ denotes the unit ball in $\RA$. In particular, the
  strong recession function~$g^\infty$ exists on all of
  $\overline{\Omega} \times \RA$. Functionals with integrands
  from~$E(\overline{\Omega},\RA)$ enjoy good continuity properties.

  Due to~\cite[Lemma~2.3]{AliBou97} there exists a sequence
  $f_k \in E(\overline{\Omega},\RA)$ with
  \begin{align}
    \sup_{k\in\mathbb{N}}f_k(x,A)=f(x,A)\;\;\; \text{and}\;\;\;
    \sup_{k\in\mathbb{N}}f_k^{\infty}(x,A)=f_{\#}(x,A) :=
    \liminf_{\substack{x'\to x\\ A'\to 
    A\\t\to \infty}}\frac{f(x',tA')}{t}.  
  \end{align}
  Let $u_j \areato u$ in $\BVA(\Omega)$. Since $f_k \in
  E(\overline{\Omega},\RA)$ we may apply the Reshetnyak--type continuity theorem in the version of~\cite[Theorem~5]{KriRin10} to conclude
  \begin{align*}
    \liminf_{j\to\infty}\overline{\mathfrak{F}}[u_{j}] 
    & \geq
      \liminf_{j\to\infty}\int_{\Omega}f_k\Big(x,\frac{\dif\A
      u_{j}}{\dif\mathscr{L}^{n}}\Big)\dif
      x +
      \int_{\Omega}f_k^{\infty}\Big(x,\frac{\dif\A^{s}u_{j}}{\dif
      |\A^{s}u_{j}|}\Big)\dif
      |\A^{s}u_{j}|
    \\
    & = \int_{\Omega}f_k\Big(x,\frac{\dif\A
      u}{\dif\mathscr{L}^{n}}\Big)\dif x +
      \int_{\Omega}f_k^{\infty}\Big(x,\frac{\dif\A^{s}u}{\dif
      |\A^{s}u|}\Big)\dif |\A^{s}u| 
  \end{align*}
  and so, by monotone convergence, 
  \begin{align*}
    \int_{\Omega}f\Big(x,\frac{\dif\A u}{\dif\mathscr{L}^{n}}\Big)\dif
    x + \int_{\Omega}f_{\#}\Big(x,\frac{\dif\A u}{\dif |\A^{s}u|}
    \Big)\dif |\A^{s}u|  \leq\liminf_{j\to\infty}\overline{\mathfrak{F}}[u_{j}].
  \end{align*}
  Due to the generalisation of Alberti's celebrated
  Rank--One Theorem by De~Philippis and Rindler in~\cite{DePRin16}, we
  know that $\frac{\dif\A u}{\dif |\A^{s}u|} \in \CA$ pointwisely
  $\abs{\opA^s u}$-a.e.\,. Now, by Lemma~\ref{lem:f-recession} from
  the appendix, we find that $f_\# = f^\infty$ on
  $\overline{\Omega} \times \CA$. Hence
  \begin{align*}
    \overline{\mathfrak{F}}[u] 
    &=  \int_{\Omega}f\Big(x,\frac{\dif\A u}{\dif\mathscr{L}^{n}}\Big)\dif
      x + \int_{\Omega}f^\infty\Big(x,\frac{\dif\A u}{\dif |\A^{s}u|}
      \Big)\dif |\A^{s}u|  \leq\liminf_{j\to\infty}\overline{\mathfrak{F}}[u_{j}].
  \end{align*}
  Since~$f$ is continuous, we may apply the same argument to~$-f$ to
  obtain
  $ \overline{\mathfrak{F}}[u] \geq
  \limsup_{j\to\infty}\overline{\mathfrak{F}}[u_{j}]$.
  Hence
  $\overline{\mathfrak{F}}[u] =
  \lim_{j\to\infty}\overline{\mathfrak{F}}[u_{j}]$.
  This proves that $\overline{\mathfrak{F}}\,:\, \BVA(\Omega) \to
  \setR$ is $\opA$--area strictly continuous.

  Due to Theorem~\ref{lem:approx-COmega-bar}, $\WA(\Omega)$ is dense
  in $\BVA(\Omega)$ with respect to $\opA$--area strict
  convergence. Since $\overline{\mathfrak{F}}=\mathfrak{F}$ on
  $\WA(\Omega)$, we see that $\overline{\mathfrak{F}}\,:\,
  \BVA(\Omega) \to \setR$ is the $\opA$-area strict extension
  of~$\mathfrak{F}\,:\, \WA(\Omega) \to \setR$.

  It remains to prove the sequential weak*--lower semicontinuity of
  $\overline{\mathfrak{F}}\,:\, \BVA(\Omega) \to \setR$ on
  $\BVA(\Omega)$. Let $\mathbb{L}$ be an $\opA$--annihilating operator
  as in the exact sequence~\eqref{eq:complex}. Now, the sequential
  weak*--lower semicontinuity just follows
  from~\cite[Theorem~1.2]{ArrDePRin17} (note that $f^\infty$ is well defined
  on~$\overline{\Omega} \times \CA$ due to Lemma~\ref{lem:f-recession}
  from the appendix). The proof is complete. 
\end{proof}
If we apply to our Dirichlet class $\mathcal{D}_{u_0}$, then we obtain
the following results:
\begin{corollary}
  \label{cor:Fu0-lsc}
  Let $f$ satisfy~\eqref{eq:f-cont}--\eqref{eq:AQC} and let
  $\overline{\mathfrak{F}}_{u_{0}}\,:\, \BVA(\Omega) \to \setR$ be given
  by
  \begin{align}\label{eq:relaxedF}
    \begin{split}
      \overline{\mathfrak{F}}_{u_{0}}[u]
      &:=\int_{\Omega}f\Big(x,\frac{\dif\A
        u}{\dif\mathscr{L}^{n}}\Big)\dif\mathscr{L}^{n} +\int_{\Omega}
      f^{\infty}\Big(x,\frac{\dif \A u}{\dif |\A^{s}u|}\Big)\dif
      |\A^{s}u|
      \\
      & \;+ \int_{\partial\Omega}f^{\infty}\Big(x,\nu_{\partial\Omega}
      \otimes_{\A}\trace(u-u_{0})\Big) \dif\mathcal{H}^{n-1}
    \end{split}
  \end{align}
  is sequentially weak*--lower semicontinuous on~$\BVA(\Omega)$.
\end{corollary}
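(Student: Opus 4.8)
The plan is to deduce the sequential weak*-lower semicontinuity of $\overline{\mathfrak{F}}_{u_{0}}$ on $\BVA(\Omega)$ from that of $\overline{\mathfrak{F}}_U$ on a larger Lipschitz domain $U$ with $\Omega\compactsubset U$, which is granted by Proposition~\ref{prop:Fbar-extension}; the bridge between the two is the gluing construction of Corollary~\ref{cor:gluing}. First I would fix an open bounded Lipschitz set $U$ with $\Omega\compactsubset U$, extend $f$ to a continuous integrand on $\overline{U}\times\RA$ keeping the linear growth~\eqref{eq:lineargrowth}, the modulus of continuity~\eqref{eq:f-modulus} and $\opA$-quasiconvexity (Lemma~\ref{lem:f-recession2}), and — using that $\trace(\WA(\Omega))=L^1(\partial\Omega,\mathcal{H}^{n-1})=\trace(W^{1,1}(U\setminus\overline{\Omega}))$ by Theorem~\ref{thm:main1b} — extend the datum to some $u_0\in\WA(U)$. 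For $u\in\BVA(\Omega)$ I put $Eu:=\chi_\Omega u+\chi_{U\setminus\Omega}u_0$; by Corollary~\ref{cor:gluing}, $Eu\in\BVA(U)$ and
\begin{align*}
  \opA(Eu)=\opA u\mres_\Omega+\opA u_0\mres_{U\setminus\overline{\Omega}}+\bigl(\trace(u_0)-\trace(u)\bigr)\otimes_{\opA}\nu\,\mathcal{H}^{n-1}\mres_{\partial\Omega},
\end{align*}
$\nu$ denoting the outer unit normal of $\Omega$.

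Next I would read off the Radon--Nikod\'ym decomposition of $\opA(Eu)$ on $U$: on $\Omega$ its absolutely continuous and singular parts are those of $\opA u$; on $U\setminus\overline{\Omega}$ it is absolutely continuous with density $\opA u_0$; and on $\partial\Omega$ it is a singular measure whose polar is the rank-one tensor in the display above, which lies pointwise in $\CA$ by construction. Using that $f^\infty$ is positively $1$-homogeneous in its second argument and well defined on $\overline{U}\times\CA$ (Lemma~\ref{lem:f-recession}), that $\tfrac{\dif\opA u}{\dif\abs{\opA^{s}u}}\in\CA$ holds $\abs{\opA^{s}u}$-a.e.\ on $\Omega$ by the De~Philippis--Rindler theorem~\cite{DePRin16}, and matching the boundary jump with the boundary integrand of~\eqref{eq:relaxedF}, this gives the key identity
\begin{align*}
  \overline{\mathfrak{F}}_U[Eu]=\overline{\mathfrak{F}}_{u_{0}}[u]+\int_{U\setminus\overline{\Omega}}f(x,\opA u_0)\,\dif x,
\end{align*}
where the last integral is a finite constant independent of $u$. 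In other words, $\overline{\mathfrak{F}}_{u_{0}}$ agrees with $\overline{\mathfrak{F}}_U\circ E$ up to an additive constant, and the corollary will follow once I show that $E$ is sequentially continuous from $\BVA(\Omega)$ with its weak*-topology into $\BVA(U)$ with its weak*-topology.

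This last step is the heart of the argument. Assume $u_k\wstar u$ in $\BVA(\Omega)$. Then $Eu_k-Eu=\chi_\Omega(u_k-u)\to0$ in $L^1(U)$, hence $Eu_k\weakto Eu$ in $L^1(U)$. By Banach--Steinhaus, $\sup_k\abs{\opA u_k}(\Omega)<\infty$, so the a priori trace estimate~\eqref{eq:est-tracenew} together with Lemma~\ref{lem:upperlower} yield
\begin{align*}
  \abs{\opA(Eu_k)}(U)\leq\abs{\opA u_k}(\Omega)+\norm{\opA u_0}_{L^1(U\setminus\overline{\Omega})}+\kappa_2\norm{\trace(u_0)-\trace(u_k)}_{L^1(\partial\Omega)}\lesssim 1,
\end{align*}
so $(Eu_k)$ is bounded in $\BVA(U)$. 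By the weak*-compactness principle recorded in Section~\ref{sec:funct-analyt-setup} — a norm-bounded $\BVA(U)$-sequence converging weakly in $L^1(U)$ already converges weakly* — we conclude $Eu_k\wstar Eu$ in $\BVA(U)$.

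Finally, Proposition~\ref{prop:Fbar-extension}, applied with $\Omega$ replaced by $U$ and $f$ by its extension, shows that $\overline{\mathfrak{F}}_U$ is sequentially weak*-lower semicontinuous on $\BVA(U)$; combining this with the previous two steps,
\begin{align*}
  \overline{\mathfrak{F}}_{u_{0}}[u]&=\overline{\mathfrak{F}}_U[Eu]-\int_{U\setminus\overline{\Omega}}f(x,\opA u_0)\,\dif x\\
  &\leq\liminf_{k\to\infty}\overline{\mathfrak{F}}_U[Eu_k]-\int_{U\setminus\overline{\Omega}}f(x,\opA u_0)\,\dif x=\liminf_{k\to\infty}\overline{\mathfrak{F}}_{u_{0}}[u_k],
\end{align*}
which is the claim. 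I expect the genuine difficulty, and the reason for routing the proof through $U$ at all, to be exactly the weak*-to-weak* continuity of $E$: the boundary penalisation $u\mapsto\int_{\partial\Omega}f^\infty(x,\nu\otimes_{\opA}\trace(u-u_0))\,\dif\mathcal{H}^{n-1}$ is \emph{not} weak*-lower semicontinuous on its own, since $\trace$ is only strictly, not weak*-, continuous; but the gluing identity absorbs it into $\overline{\mathfrak{F}}_U\circ E$, and $E$ is weak*-to-weak* continuous precisely because of the uniform trace bound furnished by~\eqref{eq:est-tracenew}.
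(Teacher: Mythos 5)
Your proposal is correct and follows essentially the same route as the paper's own proof: extend $f$ and $u_0$ to a larger Lipschitz superdomain $U$, glue $u$ with $u_0$ via Corollary~\ref{cor:gluing} to get $Eu\in\BVA(U)$, observe the additive identity $\overline{\mathfrak{F}}_U[Eu]=\overline{\mathfrak{F}}_{u_0}[u]+\mathrm{const}$, prove that $E$ is sequentially weak*-to-weak* continuous (strong $L^1$-convergence is immediate; norm-boundedness of $(Eu_k)$ in $\BVA(U)$ follows from the uniform trace bound~\eqref{eq:est-tracenew} applied to the jump term, and then the weak*-compactness observation of Section~\ref{sec:funct-analyt-setup} upgrades this to weak* convergence), and transfer lower semicontinuity from Proposition~\ref{prop:Fbar-extension}. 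Your invocation of Banach--Steinhaus to get $\sup_k\abs{\opA u_k}(\Omega)<\infty$ from weak* convergence, and of Lemma~\ref{lem:upperlower} to bound the jump measure by the trace norm, are slightly more explicit than the paper's brief ``since $u_k$ is bounded in $\BVA(\Omega)$\dots'', but they are exactly the ingredients the paper is tacitly using. The closing remark correctly pinpoints why the boundary penalty cannot be handled by itself and why routing through $\BVA(U)$ is essential.
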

\begin{proof}
  Proposition~\ref{prop:Fbar-extension} (applied with $\Omega$ replaced by $U$) shows that
  $\overline{\mathfrak{F}}_U\,:\, \BVA(U) \to \setR$ is area-strictly
  continuous on $\BVA(U)$ and sequentially weak*--lower semicontinuous
  on~$\BVA(U)$.

  For $u \in \BVA(\Omega)$ let
  $\widetilde{u} := \chi_{U \setminus \overline{\Omega}} u_0 +
  \chi_\Omega u$.
  Then due to Corollary~\ref{cor:gluing} we
  have~$\widetilde{u} \in \BVA(U)$ and, with the outer normal~$\nu$
  of~$\Omega$,
  \begin{align}
    \label{eq:decomp-vtilde}
    \A \widetilde{u} =\A u\mres\Omega + \A u_{0}\mathscr{L}^{n}\mres
    (U\setminus\overline{\Omega}) +
    \trace(u-u_{0})\otimes_{\A}\nu
    \mathcal{H}^{n-1}\mres\partial\Omega.  
  \end{align}
  Hence,
  \begin{align}
    \label{eq:addconst}
    \overline{\mathfrak{F}}_{U}[\widetilde{u}] = \overline{\mathfrak{F}}_{u_{0}}[u] +
    \int_{U\setminus\overline{\Omega}}f(x,\opA
    u_{0})\dif
    x.
  \end{align}
  If $u_k \weakastto u$ in $\BVA(\Omega)$, then
  $\widetilde{u_k} \weakastto \widetilde{u}$ in $\BVA(U)$. Indeed, it
  is clear that $u_k \to u$ in $L^1(U)$. Moreover, since $u_k$ is
  bounded in $\BVA(\Omega)$, so is $\opA u_k \in \mathcal{M}(\Omega)$
  and $\trace(u_k)$ in $L^1(\partial \Omega)$ (using the Trace
  Theorem~\ref{thm:main1b}). This and~\eqref{eq:decomp-vtilde} shows
  that $\widetilde{u_k}$ is bounded in~$\BVA(U)$. In conjunction with
  $u_k \to u$ in $L^1(U)$ we obtain
  $\widetilde{u_k} \weakastto \widetilde{u}$ in $\BVA(U)$.

  Since~$\overline{\mathfrak{F}}_U$ is sequentially weak*--lower
  semicontinuous on~$\BVA(U)$, it follows that
  $\overline{\mathfrak{F}}_{u_0}$ sequentially weak*--lower semicontinuous
  on~$\BVA(\Omega)$.
\end{proof}

\begin{theorem}
  \label{thm:main2}

  Let $f$ satisfy~\eqref{eq:f-cont}--\eqref{eq:AQC}. Then the
  functional
  $\overline{\mathfrak{F}}_{u_{0}}\,:\, \BVA(\Omega) \to \setR$ is
  coercive and has a minimiser on~$\BVA(\Omega)$. Moreover, we have 
  \begin{align}
    \label{eq:consistency}
    \min_{\bvA(\Omega)}\overline{\mathfrak{F}}_{u_{0}}=
    \inf_{u_0+ \WA_0(\Omega)}\mathfrak{F}.  
  \end{align}
\end{theorem}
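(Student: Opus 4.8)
The plan is to transfer the whole problem to the enlarged domain $U$, where the natural object is $\overline{\mathfrak F}_U$ on the Dirichlet class $\mathcal D_{u_0}$, and then run the direct method there. By Corollary~\ref{cor:gluing}, the extension $\widetilde u:=\chi_{U\setminus\overline\Omega}u_0+\chi_\Omega u$ of a function $u\in\BVA(\Omega)$ lies in $\mathcal D_{u_0}$, satisfies the jump formula~\eqref{eq:decomp-vtilde}, and $u\mapsto\widetilde u$ is a bijection $\BVA(\Omega)\to\mathcal D_{u_0}$ whose inverse is restriction to $\Omega$. Together with~\eqref{eq:addconst} --- note that $\int_{U\setminus\overline\Omega}f(x,\A u_0)\,\dif x$ is finite by~\eqref{eq:lineargrowth} and $u_0\in\WA(U)$ --- this reduces coercivity, weak*-lower semicontinuity, existence of a minimiser and the value~\eqref{eq:consistency} to the corresponding assertions for $\overline{\mathfrak F}_U$ on $\mathcal D_{u_0}$, up to an additive constant.

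For coercivity I would first use the lower bound in~\eqref{eq:lineargrowth}, which passes to $f^\infty$ in the defining limit~\eqref{eq:defrecession} (so $f^\infty(x,\cdot)\ge c_1\abs{\cdot}$ on $\CA$, where the singular part is concentrated by the rank-one theorem), to obtain $\overline{\mathfrak F}_U[w]\ge c_1\,\abs{\A w}(U)$ for every $w\in\mathcal D_{u_0}$. The remaining ingredient is an $L^1(U)$-bound, i.e.\ a \Poincare{}-type inequality on the Lipschitz domain $U$ for functions pinned to $u_0$ on $U\setminus\overline\Omega$; I would prove it by contradiction. If it failed there would be $w_k\in\mathcal D_{u_0}$ with $\lambda_k:=\norm{w_k}_{L^1(U)}\to\infty$ and $\abs{\A w_k}(U)/\lambda_k\to 0$; then $v_k:=w_k/\lambda_k$ is bounded in $\BVA(U)$ with $\norm{v_k}_{L^1(U)}=1$ and $\abs{\A v_k}(U)\to 0$, so the weak*-compactness principle (on the Lipschitz domain $U$) and lower semicontinuity of the total $\A$-variation give, along a subsequence, $v_k\to v$ strongly in $L^1(U)$ with $\A v=0$, hence $v$ coincides with a polynomial of degree $\le l$ by Theorem~\ref{thm:opA-equiv}. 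Since $v_k=\lambda_k^{-1}u_0\to 0$ on the nonempty open set $U\setminus\overline\Omega$, this polynomial vanishes there, hence identically, contradicting $\norm{v}_{L^1(U)}=1$. Combining the two estimates yields $\norm{w}_{\BVA(U)}\lesssim 1+\overline{\mathfrak F}_U[w]$, and via the bijection of the first step (using $\norm{u}_{\BVA(\Omega)}\le\norm{\widetilde u}_{\BVA(U)}$) the functional $\overline{\mathfrak F}_{u_0}$ is coercive on $\BVA(\Omega)$.

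Existence is then the standard direct method: a minimising sequence for $\overline{\mathfrak F}_{u_0}$ is $\BVA(\Omega)$-bounded by coercivity, the weak*-compactness principle extracts a weak*-convergent subsequence, and Corollary~\ref{cor:Fu0-lsc} (sequential weak*-lower semicontinuity of $\overline{\mathfrak F}_{u_0}$) produces a minimiser. For~\eqref{eq:consistency}, the inequality $\le$ is immediate, since any $v\in u_0+\WA_0(\Omega)$ has $\A^s v=0$ and $\trace(v-u_0)=0$ (so the boundary term of $\overline{\mathfrak F}_{u_0}$ vanishes as $f^\infty(x,0)=0$), whence $\overline{\mathfrak F}_{u_0}[v]=\mathfrak F[v]$. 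For $\ge$ I would apply the trace-preserving area-strict smoothing of Proposition~\ref{prop:areastrictsmooth} to $\widetilde u$: for arbitrary $u\in\BVA(\Omega)$ there are $w_j\in u_0+C_0^\infty(\Omega)$ with $w_j\areato\widetilde u$ in $\BVA(U)$; then $w_j|_\Omega\in u_0+\WA_0(\Omega)$, and by the area-strict continuity of $\overline{\mathfrak F}_U$ from Proposition~\ref{prop:Fbar-extension} together with~\eqref{eq:addconst} one gets $\mathfrak F[w_j|_\Omega]=\overline{\mathfrak F}_U[w_j]-\int_{U\setminus\overline\Omega}f(x,\A u_0)\,\dif x\to\overline{\mathfrak F}_{u_0}[u]$. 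Hence $\overline{\mathfrak F}_{u_0}[u]\ge\inf_{u_0+\WA_0(\Omega)}\mathfrak F$ for every $u$, and taking the infimum over $u$ closes the circle with the $\le$-part and Step~3.

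The step I expect to be the real work is the coercivity, precisely the $L^1(U)$-bound: this is exactly the \Poincare{}-type inequality on a Lipschitz superset that Section~\ref{sec:projpoinc} deliberately leaves to ``additional extension and compactness arguments''. The argument sketched above avoids a full extension theory, relying only on the weak*-compactness principle and the rigidity of the finite-dimensional nullspace (a polynomial vanishing on a nonempty open set vanishes identically); one must only check that $U\setminus\overline\Omega$ has nonempty interior (true since $\Omega\compactsubset U$) and that the weak*-compactness principle is being applied on $U$. Everything else is bookkeeping between $\Omega$ and $U$, for which~\eqref{eq:decomp-vtilde}--\eqref{eq:addconst} and Propositions~\ref{prop:Fbar-extension} and~\ref{prop:areastrictsmooth} do the heavy lifting.
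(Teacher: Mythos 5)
Your proposal is correct, and the overall skeleton --- transfer everything to the larger Lipschitz domain~$U$ via the bijection $u\mapsto\widetilde u$ and the bookkeeping identity~\eqref{eq:addconst}, then coercivity, then the direct method with Corollary~\ref{cor:Fu0-lsc}, then consistency via Proposition~\ref{prop:areastrictsmooth} and area-strict continuity --- is exactly the paper's. Your lower bound $\overline{\mathfrak F}_U[w]\ge c_1\abs{\A w}(U)$ using the rank-one theorem for the singular part, and your spelled-out argument for the ``$\le$''-half of~\eqref{eq:consistency}, are also the intended ones (the paper calls the latter ``obvious''). The one genuinely different step is how you produce the $L^1(U)$-bound. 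You run a compactness--contradiction argument: normalise a hypothetical unbounded sequence, invoke the weak*-compactness principle to pass to a strong $L^1$-limit $v$ with $\A v=0$, use Theorem~\ref{thm:opA-equiv} to see $v$ is a polynomial, and kill it by observing $v=0$ on the nonempty open set $U\setminus\overline\Omega$. The paper instead applies Theorem~\ref{thm:poincare2} directly: extend $v_k-u_0$ by zero from $U$ to a large ball $B\supset U$, pick a fixed ball $B'$ of comparable size on which the extended function vanishes (the paper's ``$B'\subset\Omega$'' is evidently a typo for $B'\subset B\setminus\overline\Omega$), and read off $\norm{v_k-u_0}_{L^1(B)}\lesssim\abs{\A(v_k-u_0)}(B)$. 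Both are valid given what the section has established; the trade-off is that the paper's route is constructive and uses only a result it actually proves (Theorem~\ref{thm:poincare2}, via averaged Taylor polynomials), whereas yours leans on the weak*-compactness principle --- an assertion the paper states for Lipschitz domains but does not prove, and which (despite the paper's ``easy by Banach--Alaoglu'') contains a Rellich--Kondrachov--type compactness statement. You correctly identify the Poincar\'e inequality on the superset as the crux; what you missed is that the paper has already engineered Theorem~\ref{thm:poincare2} precisely so that, after the zero-extension trick, it delivers this bound with no compactness argument at all.
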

\begin{proof}
  We begin with the coerciveness of $\overline{\mathfrak{F}}_{u_{0}}$.
  Let $(v_k) \subset\BVA(\Omega)$ with $(\overline{\mathfrak{F}_{u_0}}(u_k))$
  bounded. We have to show that $(v_k)$ is bounded in
  $\BVA(\Omega)$. Let
  $\widetilde{v_k}:= \chi_{U \setminus \overline{\Omega}} u_0 +
  \chi_\Omega v_k$
  as in Corollary~\ref{cor:Fu0-lsc}. Then due to~\eqref{eq:addconst},
  $\overline{\mathfrak{F}}_U(\widetilde{v_k})$ is bounded.  By the
  linear growth condition~\eqref{eq:lineargrowth} we see that
  $(\A v_k)$ is uniformly bounded in $\mathcal{M}(U;\RK)$.  Now choose
  a ball $B' \subset \Omega$ and another ball~$B$ with $U \subset B$.
  Since $v_k -u_0 = 0$ on $U \setminus \overline{\Omega}$, we can
  extend it by zero to a function from~$\BVA(B)$ due to Theorem \ref{thm:WA10} (b). Now, we can apply
  \Poincare's inequality in the form of Theorem~\ref{thm:poincare2} to
  conclude that $(v_k)$ is also bounded in $L^1(U)$. Hence, $(v_k)$ is
  bounded on~$\BVA(\Omega)$, which is the desired coerciveness.

  By positivity of $f$ and $f^\infty$,
  $\overline{\mathfrak{F}}_{u_{0}}[w]\geq 0$ for all
  $w\in\bvA(\Omega)$, and so we may pick a minimising sequence $(u_k)$
  in $\BVA(\Omega)$. By coerciveness, this sequence is bounded
  in~$\BVA(\Omega)$. We can pick a (non--relabeled) subsequence such
  that $u_k \weakastto u$ in $\BVA(\Omega)$ for some
  $u\in \BVA(\Omega)$. By the sequential weak*--lower semicontinuity from
  Corollary~\ref{cor:Fu0-lsc}, we deduce that~$u$ is a minimiser
  of~$\overline{\mathfrak{F}}_{u_0}$. 

  We conclude the proof by showing~\eqref{eq:consistency}. The
  '$\leq$'-part is obvious. Due to
  Proposition~\ref{prop:areastrictsmooth} we find a sequence
  $w_k \in \mathcal{D}_{u_0}$ such that $w_k \areato u$ in
  $\BVA(U)$. By the $\opA$--area strict continuity
  of~$\overline{\mathfrak{F}}_U$ on $\BVA(U)$, see
  Proposition~\ref{prop:Fbar-extension}, we see that
  $\overline{\mathfrak{F}}_U(u) = \lim_{k \to \infty}
  \overline{\mathfrak{F}}_U(w_k)$.
  This and~\eqref{eq:addconst} proves the '$\geq$'-part
  of~\eqref{eq:consistency}.
\end{proof}

\section{Appendix}
\label{sec:appendix}
We now collect some auxiliary results that have been used in the main part of the paper. 
The following lemma shows that the recession function is automatically
well-defined on the $\opA$-rank one cone.
\begin{lemma}
  \label{lem:f-recession}
  Let $\opA$ be $\setR$-elliptic, let
  $f\,:\, \overline{\Omega} \times \RA \to \setR$ be
  $\opA$-quasiconvex in the sense of~\eqref{eq:AQC}, satisfy the linear growth
  condition~\eqref{eq:lineargrowth} and the continuity
  condition~\eqref{eq:f-modulus}. Then $f(x,\cdot)$ is Lipschitz continuous
  in~$\RA$ uniformly in~$x \in \overline{\Omega}$. Moreover, the
  strong recession function
  $f^\infty\,:\, \overline{\Omega} \times \RA \to \setR$ with
  \begin{align*}
    f^{\infty}(x,A):=\lim_{\substack{x'\to x\\ A'\to A\\
    t\to\infty}}\frac{f(x',tA')}{t}
  \end{align*}
  is well-defined on~$\overline{\Omega} \times \CA$.  (Note that the
  limit $A'\to A$ is taken in~$\RA$.) Moreover,
  \begin{align*}
    \abs{f^\infty(x,A) - f^\infty(x',A)} &\leq \omega(\abs{x'-x}) \abs{A}
  \end{align*}
  for all $x, x' \in \overline{\Omega}$ and $A \in \CA$.
\end{lemma}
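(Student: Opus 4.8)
The plan is to prove the three assertions in order, deriving everything from two structural consequences of $\opA$-quasiconvexity combined with the linear growth~\eqref{eq:lineargrowth} and the modulus assumption~\eqref{eq:f-modulus}. The first structural fact is that, for every fixed $x$, the map $f(x,\cdot)$ is convex along every direction of the $\opA$-rank one cone $\CA$; this is classical (cf.~\cite{FonMul99}), and for the reader's convenience I would reprove it by a plane-wave argument: given $A\in\RA$ and $B = v\otimes_{\A}\xi\in\CA$ (with $\xi$ of rational components, the general case following from continuity of $f$), test~\eqref{eq:AQC} with $\varphi(y)=v\,h(\xi\cdot y)$, where $h$ is a $1$-periodic Lipschitz function whose derivative takes the two values $s_{1},s_{2}$ on sets of measure $\lambda,1-\lambda$ inside the unit cell with $\lambda s_{1}+(1-\lambda)s_{2}=0$; after the usual cut-off of $\varphi$ near $\partial(0,1)^{n}$ (whose error is controlled by~\eqref{eq:lineargrowth} and disappears as the oscillation frequency tends to infinity, since $\A\varphi = B\,h'(\xi\cdot y)$), one obtains $f(x,A)\le\lambda f(x,A+s_{1}B)+(1-\lambda)f(x,A+s_{2}B)$, i.e.\ convexity of $t\mapsto f(x,A+tB)$.

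The second structural fact is the Lipschitz estimate. Since $\RA=\linearspan(\CA)$, I would fix a basis $B_{1},\dots,B_{m}$ of $\RA$ with each $B_{i}\in\CA$. Along any line in direction $B_{i}$ the slice $s\mapsto f(x,A+sB_{i})$ is convex and, by~\eqref{eq:lineargrowth}, bounded above by an affine function of $\abs{s}$ with slope $c_{2}\abs{B_{i}}$; a convex function on $\R$ lying below $a+b\abs{s}$ has all difference quotients bounded in modulus by $b$, so this slice is $c_{2}\abs{B_{i}}$-Lipschitz, with a constant independent of $A$ and of $x$. Joining two arbitrary points of $\RA$ by a polygonal path with $m$ segments parallel to the $B_{i}$ and total length $\lesssim$ the distance of the endpoints (the constant depending only on the basis, hence only on $\opA$), I would conclude that $f(x,\cdot)$ is Lipschitz on $\RA$ with a constant $L=L(\opA,c_{2})$, uniformly in $x\in\overline{\Omega}$.

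I would then turn to the recession function. For fixed $A\in\CA$, the first step gives that $s\mapsto f(x,sA)$ is convex on $[0,\infty)$, so the difference quotient $\frac{f(x,sA)-f(x,0)}{s}$ is nondecreasing in $s$; it is bounded above by $c_{2}\abs{A}+\frac{c_{3}}{s}$ thanks to~\eqref{eq:lineargrowth}, hence converges, so $f^{\infty}(x,A):=\lim_{s\to\infty}\frac{f(x,sA)}{s}$ exists for each $x\in\overline{\Omega}$. To see that the full limit in~\eqref{eq:defrecession} exists and equals this value, take $x_{k}\to x$ in $\overline{\Omega}$, $A_{k}\to A$ in $\RA$ and $t_{k}\to\infty$ and write
\[
  \frac{f(x_{k},t_{k}A_{k})}{t_{k}}-\frac{f(x,t_{k}A)}{t_{k}}
  = \frac{f(x_{k},t_{k}A_{k})-f(x_{k},t_{k}A)}{t_{k}}+\frac{f(x_{k},t_{k}A)-f(x,t_{k}A)}{t_{k}};
\]
the first term is bounded by $L\abs{A_{k}-A}$ by the Lipschitz estimate just proved, the second by $\omega(\abs{x_{k}-x})\big(\abs{A}+t_{k}^{-1}\big)$ by~\eqref{eq:f-modulus}, so both vanish while $\frac{f(x,t_{k}A)}{t_{k}}\to f^{\infty}(x,A)$. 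Finally, applying~\eqref{eq:f-modulus} at $z=tA$, dividing by $t$ and letting $t\to\infty$ yields $\abs{f^{\infty}(x,A)-f^{\infty}(x',A)}\le\omega(\abs{x-x'})\abs{A}$ for all $x,x'\in\overline{\Omega}$ and $A\in\CA$.

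The part I expect to require the most care is the first step, i.e.\ converting $\opA$-quasiconvexity into one-dimensional convexity along $\CA$-directions: one must justify that the boundary cut-off in the plane-wave construction contributes only a negligible error (this is where~\eqref{eq:lineargrowth} is used) and handle irrational frequencies $\xi$ by approximation together with continuity of $f(x,\cdot)$ on $\RA$. Everything afterwards is elementary convex analysis — monotonicity and boundedness of difference quotients of convex functions — combined with the two quantitative inputs~\eqref{eq:lineargrowth} and~\eqref{eq:f-modulus}; the hypothesis of $\setR$-ellipticity enters only through Lemma~\ref{lem:upperlower}, which guarantees that the elements of $\CA$ have size comparable to $\abs{v}\abs{\xi}$ and hence that the chosen basis of $\RA$ inside $\CA$ has controlled geometry, keeping the constant in the polygonal-path argument under control.
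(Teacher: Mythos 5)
Your proof is correct and follows the same overall architecture as the paper: pass from $\opA$-quasiconvexity to one-dimensional convexity along $\CA$-directions, combine with the linear growth bound~\eqref{eq:lineargrowth} to get a Lipschitz estimate on $\RA$ uniform in $x$, use the monotone difference quotient $t\mapsto \big(f(x,tA)-f(x,0)\big)/t$ to define $f^{\infty}(x,\cdot)$ on $\CA$, and then deploy the Lipschitz bound together with~\eqref{eq:f-modulus} to pass to the full three-variable limit and to the continuity estimate in $x$. You deviate in two places, both local. First, you re-derive the $\CA$-convexity from the definition via a plane-wave construction rather than citing~\cite[Prop.~3.4]{FonMul99}; this is arguably more self-contained, since the paper must otherwise extend~\cite{FonMul99} to higher-order annihilators $\mathbb{L}$ in a footnote, but the cut-off near $\partial(0,1)^{n}$ is where the real work lies and you only sketch it, and the rational-to-irrational-$\xi$ step deserves a line making the continuity argument on $\RA$ explicit. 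Second, and more substantively, your Lipschitz argument is cleaner than the paper's: you bound the derivative of a convex slice lying below a tent $a+b\abs{s}$, which directly gives a slope bound $b=c_{2}\abs{B_{i}}$ independent of $A$ and $x$. The paper instead asserts that $g(t):=\abs{f(x,A+tB)-f(x,A)}/t$ is increasing; for a convex $h$ with $h(0)=0$ the quotient $h(t)/t$ is monotone on $(0,\infty)$, but $\abs{h(t)}/t$ need not be if $h$ changes sign, so your formulation sidesteps a gap the paper's version leaves implicit (the paper's estimate can be salvaged by treating positive and negative values of $h$ separately, but you avoid the issue altogether). One small inaccuracy in your closing remark: $\setR$-ellipticity is not what controls the polygonal-path constant --- that is just equivalence of norms on the finite-dimensional space $\RA$, requiring no ellipticity at all; the paper needs $\setR$-ellipticity to guarantee a constant-rank annihilator $\mathbb{L}$ before it can invoke~\cite{FonMul99}, a step your plane-wave derivation bypasses anyway.
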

\begin{proof}
  We begin with the Lipschitz continuity of~$f$ on $\RA$.

  Let $A \in \RA$ and $B = a \otimes_{\opA} b\in \CA$.  Since $f$ is $\opA$-quasiconvex,
  it is a consequence\footnote{As proven in~\cite{FonMul99}, if
    $\mathcal{A}$ is a first order linear homogeneous differential
    operator, then $\mathcal{A}$--quasiconvex functions are
    $\Lambda_{\mathcal{A}}$--convex. Note that in our setting,
    $\mathbb{L}=\mathcal{A}$ need not be first of first order,
    however, their arguments extend to the case of higher order
    annihilating operators $\mathbb{A}$ in a straightforward manner.}
  of~\cite[Prop. 3.4]{FonMul99} that $t \mapsto f(x,A+tB)$ is convex
  on~$\setR$. This property is known as $\CA$-convexity,
  see~\cite{KiKr16}.

  Thus the function $g(t):=\abs{f(x,A+ta\otimes_{\A}b)-f(x,A)}/t$ is
  increasing. Hence, with $\lambda:=(1+|A+B|+|A|)/|B|>1$, we obtain
  \begin{align*}
    \abs{f(x,A+B)-f(x,A)}
    & =g(1) \leq g(\lambda)
    \\
    &\leq |f(x,A+\lambda
      a\otimes_{\A}b)-f(x,A)|\frac{|B|}{1+|A+B|+|A|}
    \\
    & \leq \frac{c_2(2\abs{A}+\lambda\abs{B})+ 2c_3}{1+|A+B|+|A|}|B| 
    \\
    & \leq \frac{c_2(1+3|A|+\abs{A+B})+ 2c_3}{1+|A+B|+|A|}|B| 
    \\
    &\leq (3c_2+2c_3) |B|
  \end{align*}
using~\eqref{eq:lineargrowth}.
  This proves the Lipschitz continuity in $\CA$-directions.

  If $B \in \RA$, then by $\RA=\linearspan(\CA)$ we can decompose~$B$
  into at most~$K$ summands from~$\CA$. Now the Lipschitz continuity
  in~$\CA$-directions, implies
  \begin{align}
    \label{eq:f-lip-RA}
    \abs{f(x,A+B)-f(x,A)} &\leq K(3c_2+2c_3) |B|
  \end{align}
  for all $A,B \in \RA$. This proves the Lipschitz continuity part.

  Let $A \in \CA$ and $x \in \overline{\Omega}$. Then
  $t \mapsto (f(x,tA)-f(x,0))/t$ is increasing in~$t$ by
  $\CA$-convexity of~$f(x,\cdot)$ and bounded by~$c_2\abs{A}$due toy the
  linear growth condition~\eqref{eq:lineargrowth}. This allows us to
  define $g^\infty\,:\, \overline{\Omega} \times \CA \to \setR$ by
  \begin{align*}
    g^\infty(x,A)=\lim_{t\to\infty}\frac{f(x,tA)}{t} =
    \sup_{t>0}\frac{f(x,tA)}{t}. 
  \end{align*}

  Now, let $A' \in \RA$ and $x' \in \overline{\Omega}$, then
  with~\eqref{eq:f-lip-RA} and~\eqref{eq:f-modulus}
  \begin{align*}
    \biggabs{\frac{f(x',tA')}{t} - \frac{f(x,tA)}{t}} 
    &\leq
      \biggabs{
      \frac{f(x',tA')-f(x',tA)}{t}}
      +
      \biggabs{\frac{f(x',t 
      A)-f(x,tA)}{t}}   
      \\
    &\leq K(3c_2+2c_3) \abs{A-A'} + \omega(\abs{x'-x}) \frac{1+ t \abs{A}}{t}.
  \end{align*}
  This proves $f^\infty(x,A) = g^\infty(x,A)$ for all
  $x \in \overline{\Omega}$ and $A \in \CA$. Consequently, we obtain the existence
  of~$f^\infty$ in $\overline{\Omega} \times \CA$.

  The continuity of $f^\infty(\cdot,A)$ for $A \in \CA$ is a direct
  consequence of the continuity of~$f(\cdot,A)$.
\end{proof}
\begin{lemma}
  \label{lem:f-recession2}
  Let $\opA$ be $\setR$-elliptic, let
  $f\,:\, \overline{\Omega} \times \RA \to \setR$ be
  $\opA$-quasiconvex in the sense of~\eqref{eq:AQC}, satisfy the linear growth
  condition~\eqref{eq:lineargrowth} and the continuity
  condition~\eqref{eq:f-modulus}. Furthermore,
  let~$\Omega \compactsubset U$ with $\partial U$ Lipschitz. Then
  there exists an extension
  $\tilde{f}\,:\,\overline{U} \times \RA \to \setR$ of~$f$, which is
  $\opA$-quasiconvex, satisfies the linear growth
  condition~\eqref{eq:lineargrowth} and the continuity
  condition~\eqref{eq:f-modulus}. (The modulus of continuity might
  change by a factor.)
\end{lemma}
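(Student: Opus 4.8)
The plan is to build $\tilde f$ in two stages: a McShane--Whitney extension in the base variable~$x$, which keeps the linear growth and the continuity condition but destroys $\opA$-quasiconvexity, followed by the $\opA$-quasiconvex envelope in the variable~$A$, which restores $\opA$-quasiconvexity at the price of enlarging the modulus of continuity by a constant factor.

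\emph{Step 1 (McShane extension in $x$).} Since $\overline{U}$ is bounded, I would first replace the $\omega$ of~\eqref{eq:f-modulus} by its smallest concave majorant on $[0,\operatorname{diam}(U)]$; this is again a modulus of continuity, dominates $\omega$, is bounded by $M:=\bar\omega(\operatorname{diam}(U))<\infty$, and — being concave with value $0$ at $0$ — is subadditive. Keeping the name $\omega$ for $\bar\omega$ (this already accounts for the ``factor'' in the statement), set for $x\in\overline{U}$ and $A\in\RA$
\begin{align*}
 \tilde f_0(x,A):=\inf_{y\in\overline{\Omega}}\big(f(y,A)+\omega(|x-y|)(1+|A|)\big).
\end{align*}
From $f(y,\cdot)\ge c_1|\cdot|$ one reads off $\tilde f_0(x,A)\ge c_1|A|$, and fixing $y_0\in\overline{\Omega}$ and using $\omega\le M$ gives $\tilde f_0(x,A)\le (c_2+M)|A|+(c_3+M)$, so $\tilde f_0$ obeys~\eqref{eq:lineargrowth} with new constants. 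For $x\in\overline{\Omega}$ the choice $y=x$ yields $\tilde f_0(x,A)\le f(x,A)$, while~\eqref{eq:f-modulus} gives the reverse, so $\tilde f_0=f$ on $\overline{\Omega}$. For $x,x'\in\overline{U}$ and any $y\in\overline{\Omega}$, subadditivity of $\omega$ gives $\tilde f_0(x,A)\le f(y,A)+\omega(|x-x'|)(1+|A|)+\omega(|x'-y|)(1+|A|)$; taking the infimum over $y$ and exchanging $x,x'$ yields $|\tilde f_0(x,A)-\tilde f_0(x',A)|\le\omega(|x-x'|)(1+|A|)$. Finally, each summand $f(y,\cdot)+\omega(|x-y|)(1+|\cdot|)$ is Lipschitz with a constant uniform in $y$ (by the Lipschitz continuity of $f(y,\cdot)$ from Lemma~\ref{lem:f-recession}), hence $\tilde f_0(x,\cdot)$ is Lipschitz on $\RA$, uniformly in $x$.

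\emph{Step 2 ($\opA$-quasiconvex envelope in $A$).} An infimum of $\opA$-quasiconvex functions need not be $\opA$-quasiconvex, so I would pass to
\begin{align*}
 \tilde f(x,A):=\inf\Big\{\dashint_{(0,1)^n}\tilde f_0\big(x,A+(\opA\varphi)(z)\big)\,dz\;:\;\varphi\in\sobo_0^{1,\infty}((0,1)^n;\RN)\Big\}.
\end{align*}
Since $\tilde f_0(x,\cdot)$ is real-valued and (locally Lipschitz, hence) continuous, the standard theory of $\opA$-quasiconvex envelopes, cp.~\cite{FonMul99,DacAfree}, applies: $\tilde f(x,\cdot)$ is $\opA$-quasiconvex and is the largest $\opA$-quasiconvex function below $\tilde f_0(x,\cdot)$. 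In particular $\tilde f\le\tilde f_0$ gives the upper bound in~\eqref{eq:lineargrowth}; and since $c_1|\cdot|$ is convex, hence $\opA$-quasiconvex, while $\tilde f_0(x,\cdot)\ge c_1|\cdot|$, the envelope also satisfies $\tilde f(x,\cdot)\ge c_1|\cdot|$. On $\overline{\Omega}$ we have $\tilde f_0(x,\cdot)=f(x,\cdot)$, which is already $\opA$-quasiconvex, so $\tilde f=f$ there.

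\emph{Step 3 (continuity of $\tilde f$), the main obstacle.} It remains to push the $x$-continuity through the envelope, and here the difficulty is that the naive bound involves $\dashint_{(0,1)^n}|A+\opA\varphi|\,dz$, which is not a priori controlled by $1+|A|$. Coercivity resolves this: given $\varepsilon\in(0,1]$ and $x'\in\overline{U}$, choose $\varphi$ with $\dashint_{(0,1)^n}\tilde f_0(x',A+\opA\varphi)\,dz\le\tilde f(x',A)+\varepsilon$; using $\tilde f_0\ge c_1|\cdot|$ on the left and $\tilde f(x',A)\le\tilde f_0(x',A)\le(c_2+M)|A|+(c_3+M)$ on the right gives $\dashint_{(0,1)^n}|A+\opA\varphi|\,dz\le C(1+|A|)$ with $C=C(c_1,c_2,c_3,M)$. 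Then, since $\tilde f(x,A)\le\dashint_{(0,1)^n}\tilde f_0(x,A+\opA\varphi)\,dz$, the Step~1 estimate yields
\begin{align*}
 \tilde f(x,A)&\le \dashint_{(0,1)^n}\tilde f_0(x',A+\opA\varphi)\,dz+\omega(|x-x'|)\dashint_{(0,1)^n}\big(1+|A+\opA\varphi|\big)\,dz\\
 &\le \tilde f(x',A)+\varepsilon+(1+C)\,\omega(|x-x'|)(1+|A|).
\end{align*}
Letting $\varepsilon\to0$ and exchanging $x$ and $x'$ gives~\eqref{eq:f-modulus} for $\tilde f$ with the modulus $(1+C)\omega$, the announced factor change. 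Together with Steps~1 and~2 this proves the lemma. I expect this last step — and specifically the a priori control of $\dashint_{(0,1)^n}|A+\opA\varphi|\,dz$ along near-minimisers via the lower growth bound — to be the only genuinely delicate point, everything else being routine once the McShane/envelope scheme is set up.
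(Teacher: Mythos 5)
Your proof is correct, but it takes a genuinely different and substantially longer route than the paper's. The paper's proof is a one-liner: since $\partial\Omega$ and $\partial U$ are Lipschitz, there is a Lipschitz retraction $\Phi\colon\overline{U}\to\overline{\Omega}$ with $\Phi|_{\overline{\Omega}}=\mathrm{id}$, and one simply sets $\tilde f(x,A):=f(\Phi(x),A)$. This trivially preserves $\opA$-quasiconvexity and the linear growth bound, since for each fixed $x$ the function $\tilde f(x,\cdot)$ is literally $f(y,\cdot)$ for some $y\in\overline{\Omega}$; and the continuity condition~\eqref{eq:f-modulus} survives with $\omega(\cdot)$ replaced by $\omega(L\,\cdot)$, where $L=\mathrm{Lip}(\Phi)$ — this is what the parenthetical ``the modulus of continuity might change by a factor'' refers to. By extending in $x$ through composition rather than through an infimum over $y$, the paper avoids the central difficulty in your approach: an infimum of $\opA$-quasiconvex functions is not $\opA$-quasiconvex, which is exactly what forces you into Steps~2 and~3.

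That said, your argument is sound. Step~1 (McShane extension after passing to the concave majorant of $\omega$, so that subadditivity is available) correctly preserves the modulus, the growth, and agreement with $f$ on $\overline{\Omega}$, and the uniform Lipschitz bound in $A$ from Lemma~\ref{lem:f-recession} passes to the infimum. Step~2 (the $\opA$-quasiconvex envelope in the $A$-variable) is the right repair, and your verification of the lower growth bound via Jensen and $\dashint_{(0,1)^n}\opA\varphi\,dz=0$ is correct; this also shows the envelope is real-valued. Step~3 is the genuinely delicate point and you handle it properly: the a priori bound $\dashint|A+\opA\varphi|\,dz\lesssim 1+|A|$ along near-minimising $\varphi$ follows from the lower growth bound on $\tilde f_0$ plus the uniform upper bound $\tilde f(x',A)\le\tilde f_0(x',A)$, and from this the $x$-continuity with modulus $(1+C)\omega$ follows cleanly. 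The trade-off is clear: your construction is self-contained and avoids invoking the existence of a Lipschitz retraction of $\overline{U}$ onto $\overline{\Omega}$, but at the cost of introducing the envelope machinery and the coercivity argument that the paper's approach sidesteps entirely.
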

\begin{proof}
  Since~$\partial U$ and $\partial \Omega$ are Lipschitz, we find a
  Lipschitz map $\Phi\,:\, \overline{U} \to \overline{\Omega}$, which
  is the identity on~$\overline{\Omega}$. Now define
  $\tilde{f}(x,A) := f(\Phi(x),A)$. 
\end{proof}

\hspace*{1ex}

\centerline{\bf Declaration}
\noindent{
The authors declare that there are no conflicts of interest.}

\bibliographystyle{amsalpha}
\bibliography{traces}

\end{document}